\theoremstyle{definition}
\newtheorem{theorem}{Theorem}[section]
\newtheorem{definition}[theorem]{Definition}
\newtheorem{lemma}[theorem]{Lemma}
\newtheorem{remark}[theorem]{Remark}
\newtheorem{corollary}[theorem]{Corollary}
\newtheorem{example}[theorem]{Example}
\newtheorem{property}[theorem]{Property}
\newtheorem*{remark*}{Remark}
\numberwithin{equation}{section}
\newcommand{\mc}{\mathcal}
\newcommand{\mb}{\mathbb}
\newcommand{\pa}{\partial}
\newcommand{\mr}{\mathrm}
\newcommand{\lag}{\langle}
\newcommand{\rag}{\rangle}
\DeclareMathOperator{\dist}{dist}
\title{Boundary Harnack Principle on Nodal Domains}
\date{\today}
\author{Fanghua Lin}
\address{Courant Institute of Mathematical Sciences, New York University, 251 Mercer Street, New York, NY10012, USA}
\email{linf@cims.nyu.edu}
\author{Zhengjiang Lin}
\address{Courant Institute of Mathematical Sciences, New York University, 251 Mercer Street, New York, NY10012, USA}
\email{malin@nyu.edu}
\begin{document}

\begin{abstract}
We study some geometric and potential theoretic properties of nodal domains of solutions to certain 
uniformly elliptic equations. In particular, we establish corkscrew conditions, Carleson type estimates and boundary 
Harnack inequalities on a class of nodal domains.
\end{abstract}

\maketitle

\textbf{Dedicate to Professor Jiaxing Hong on the Ocassion of His 80th Birthday}
\section{Introduction}

Let $w$ be a nonzero solution of $\mc{L}(w) = 0$ in $B_{10}(0) \subset \mb{R}^n$, where $\mc{L} = \pa_i (a_{ij}(x) \pa_j)$ with the coefficient matrix $A(x) = (a_{ij})$ satisfying (\ref{Coefficient 1}) and (\ref{Coefficient 2}).
Let $\Omega$ be a nodal domain of $w$, which is a path-connected subregion of the set $\{x \in B_{10} \ | \ w(x) \neq 0\}$.
In order to get meaningful analytic estimates such as those presented in survey articles 
\cite{A12},\cite{T17}, one cannot avoid dealing with the cases that $\Omega$ is 
a non-smooth domain. At a micro scale,  $\Omega$ resembles cone like structures near each point of $\pa 
\Omega \cap B_{10}$  by the unique continuation property.
While at larger scales, $\Omega$ could be like a highly twisted H\"{o}lder type domain with rather 
complicated geometrical and topological properties. In higher dimensions, even when the nodal set $Z(w) = 
\{ x \in B_{10} \ | \ w(x) = 0\}$ is in a small neighborhood of a one dimensional smooth set and hence small in apparent
geometric size, its complexity is hard to bound. For example, by Runge's theorem, one can easily 
construct a sequence of harmonic functions $\{w_k(x)\}$ in $\mb{R}^n$ ( $n \geq 2$ ) such that $w_k \to -1$ locally uniformly on $\Sigma$ while $w_k \to +1$ locally uniformly on $\mb{R}^n \backslash \Sigma$, 
where $\Sigma$ is a finite union of closed half-lines connecting the origin to 
infinity. In particular, some of
the nodal domains of $w_k$ inside $B_{10}$ are collapsed into an arbitrarily small open neighborhood of 
$\Sigma$. In such cases, one cannot expect the validity of a three sphere theorem for solutions or the validity of an uniform Carleson type estimate or the boundary Harnack principle. It is remarkable, on the other hand, that Logunov \cite{L182} proved the
Nadirashvili's conjecture, which asserts that $H^{n-1} (\{x \ | \ w(x) = 0\} \cap B_1) \geq C(n) >0$ for a harmonic function $w$ with $w(0)=0$. It means that such sequences of harmonic functions
$\{w_k(x)\}$ as described above must be highly oscillating and not locally uniformly bounded.

After examining various examples one concludes that in order to carry out classical potential and elliptic PDE analysis on a nodal domain $\Omega$ similar to those in well-known cases of Lipschitz and NTA domains, see  
\cite{CFMS81}, \cite{JK82}), one needs to make some additional assumptions on the solutions $w$ and
operators $\mc{L}$. In particular, one hopes to find a class of domains that are
invariant under scaling (at least, they are so with respect to the scaling ups). In recent 
works of Logunov and Malinnikova \cite{LM15}, \cite{LM16}, it is proved that if $u,v$ 
are usual harmonic functions in $B_{10}$ with $Z(u) = Z(v)$, then the ratio $f \equiv v/u$ is analytic and satisfies the 
Harnack inequality and $|\nabla f|$ as well as higher order derivatives of $f$ validate  
estimates like those for typical solutions of elliptic PDEs with analytic coefficients. Similar results were proved in $\mb{R}^2$ in \cite{M14}. 
All these estimates depend on a fixed nature of the analytic variety $Z(u)$, and they are not necessarily scaling invariant. On the other hand, it is not hard to see that 
\cite{LM16} can be generalized to the case when $u,v$ are solutions of elliptic PDEs with 
real analytic coefficients.

In this paper, we consider a class of solutions $w$ which have a fixed bound on their growth rate 
or a bound on their frequencies on $B_{10}$ (see Section \ref{Section PRE} for details). More precisely, we shall consider those 
$w \in \mc{S}_{N_0} (\Lambda)$ defined by (\ref{Definition of Compact 
Family}), a very natural class of solutions which have been investigated in great detail for their 
quantitative unique continuation properties and related geometric measure estimates on the nodal and 
critical point sets (see Section \ref{Section PRE}). The following are main results of this paper.

\subsection{Main Results}

\begin{theorem}\label{Main Holder}
    \textit{
    Suppose that $\mc{L}u=\mc{L}v = 0$ in $B_{10}$, $N_u \leq N_0 < \infty$, and $0 \in Z(u) \subset Z(v)$, then $v/u \in C^\alpha(B_{1})$ for some $\alpha = \alpha(\Lambda,N_0) \in (0,1)$.
    }
\end{theorem}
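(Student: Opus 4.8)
The plan is to exploit that $f\equiv v/u$ solves, on $B_2\setminus Z(u)$, the degenerate elliptic equation $\dv(u^2A\nabla f)=u\,\mc L v=0$, and to combine interior De Giorgi--Nash--Moser estimates (which are genuinely effective only away from $Z(u)$) with boundary estimates across $Z(u)$ coming from the corkscrew condition, Carleson estimate, and boundary Harnack principle established above for nodal domains. Concretely, I would reduce the theorem to two assertions: (I) a uniform bound $\sup_{B_1\setminus Z(u)}|f|\le C(\Lambda,N_0)$, and (II) an oscillation--decay estimate $\osc_{B_\rho(x_0)\setminus Z(u)}f\le C(\Lambda,N_0)\,\rho^{\alpha}$ for every $x_0\in Z(u)\cap B_1$ and every small $\rho$. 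Given (I) and (II), a routine patching argument finishes the proof: on a ball $B_{d/2}(p)$ with $d=\dist(p,Z(u))$ the weight $u^2$ has oscillation ratio $\le C(\Lambda)$ by the interior Harnack inequality for $u$, so interior $C^\alpha$ bounds for $f$ are uniform there; interpolating these with (II) applied at the nodal point nearest to $p$ yields $f\in C^\alpha(B_1\setminus Z(u))$ with a uniform seminorm, hence a $C^\alpha(B_1)$ extension.

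For (I): by the frequency bound, the vanishing order $\mc N_u(x_0)$ of $u$ at any $x_0\in Z(u)\cap B_2$ is at most $C(N_0)$. Blowing up at $x_0$, $u$ is asymptotic to a nontrivial homogeneous polynomial $P$ of degree $\mc N_u(x_0)$, and since $Z(u)\subset Z(v)$ the leading polynomial $Q$ of $v$ must vanish on the cone $Z(P)$; a divisibility (Nullstellensatz-type) argument forces $\deg Q\ge\deg P$, i.e.\ $v$ vanishes at $x_0$ at least to the order of $u$. Combined with the doubling inequality this gives $\sup_{B_\rho(x_0)}|v|\le C\sup_{B_\rho(x_0)}|u|$ for all small $\rho$, which propagates through Harnack chains and the Carleson estimate to $|v|\le C(\Lambda,N_0)\,|u|$ on $B_1$. (Alternatively one argues by compactness of $\mc S_{N_0}(\Lambda)$: were $|f|$ unbounded one could extract a limiting pair still satisfying the inclusion of nodal sets but violating unique continuation.)

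For (II), the heart of the matter: fix $x_0\in Z(u)\cap B_1$ and a small $r$; the number of connected components of $B_{2r}(x_0)\setminus Z(u)$ is at most $C(N_0)$ and each satisfies the corkscrew and Harnack-chain conditions with constants $C(\Lambda,N_0)$ (structure of nodal sets under a frequency bound, Section \ref{Section PRE}, together with the results above). On a component $\Omega$ with $u>0$, put $M=\sup_{\Omega\cap B_{2r}}f$ and $m=\inf_{\Omega\cap B_{2r}}f$; then $v-mu$, $Mu-v$ and $u$ are nonnegative solutions of $\mc L(\cdot)=0$ on $\Omega\cap B_{2r}$ vanishing continuously on $\partial\Omega\cap B_{2r}\subset Z(u)$ (if one of the first two vanishes at an interior point, $f$ is constant on $\Omega$ and there is nothing to prove). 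Applying the boundary Harnack principle to the pairs $(v-mu,u)$ and $(Mu-v,u)$ gives $\sup_{\Omega\cap B_r}(f-m)\le C\inf_{\Omega\cap B_r}(f-m)$ and $\sup_{\Omega\cap B_r}(M-f)\le C\inf_{\Omega\cap B_r}(M-f)$, which add up to
\[
\osc_{\Omega\cap B_r}f\ \le\ \theta\,\osc_{\Omega\cap B_{2r}}f,\qquad \theta=\tfrac{C-1}{C+1}<1,
\]
and the same holds where $u<0$ after replacing $u$ by $-u$; moreover the boundary Harnack principle makes $f$ Hölder up to $\partial\Omega\cap Z(u)$ within each $\Omega$.

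It remains to glue these single-component estimates across $Z(u)$, and this is where I expect the main difficulty. Two adjacent components share a boundary portion in $Z(u)\cap B_{2r}(x_0)$; at $H^{n-1}$-a.e.\ point $y$ of it, away from the lower-dimensional singular set of $Z(u)$, we have $\nabla u(y)\neq0$, so locally $Z(u)$ is a $C^1$ hypersurface and, writing $s$ for the signed distance to it, $u=a\,s+o(|x-y|)$ and $v=b\,s+o(|x-y|)$ with $a(y)\neq0$; hence the one-sided limits of $f$ from the two components both equal $b(y)/a(y)$, so $f$ has no jump across $Z(u)$. Consequently $\osc_{B_r(x_0)\setminus Z(u)}f\le\sum_i\osc_{\overline{\Omega_i}\cap B_r}f\le C(N_0)\max_i\osc_{\overline{\Omega_i}\cap B_r}f$; iterating the single-component decay $m=m(\Lambda,N_0)$ times before summing — legitimate because, by the cone-like structure of the nodal domains at $x_0$, every component reaches a corkscrew point at each dyadic scale near $x_0$ — yields $\osc_{B_r(x_0)\setminus Z(u)}f\le C(N_0)\theta^{m}\,\osc_{B_{2^m r}(x_0)\setminus Z(u)}f\le\tfrac12\osc_{B_{2^m r}(x_0)\setminus Z(u)}f$, and the standard iteration gives (II) with $\alpha=\alpha(\Lambda,N_0)$. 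The hard part is precisely this last step: controlling $\osc f$ across $Z(u)$ (including across its singular set) while keeping every constant dependent only on $\Lambda$ and $N_0$; the frequency pinching at $x_0$ — blow-ups $(u,v)\to(P,Q)$ homogeneous of the same degree with $Q/P$ continuous and $0$-homogeneous — is what simultaneously supplies the matching and the base case of the iteration.
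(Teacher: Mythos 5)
Your outer structure (uniform bound plus oscillation decay at nodal points) matches the paper, and your part (I) — whether via the blow-up/Nullstellensatz route or the compactness alternative — is a workable substitute for the paper's Theorem \ref{Local Boundedness} (which instead shows directly, using the corkscrew condition, that $Cu-v$ keeps the sign of $u$ for a uniform large $C$). But your part (II) diverges from the paper in a way that runs into a genuine obstruction, and you correctly sense it is "where the main difficulty" lies.

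The problem is that the two-sided boundary Harnack inequality on a \emph{single} nodal component, which you invoke for the pairs $(v-mu,u)$ and $(Mu-v,u)$ on each $\Omega\cap B_{2r}$, does \emph{not} hold with constants depending only on $\Lambda$ and $N_0$. The paper points this out explicitly: Theorem \ref{Boundary Harnack in Single Domain} only gives a one-sided bound with uniform constants, the two-sided version (\ref{Growth Control Single 3}) has a constant involving $C_2/C_1$ which degenerates on "dumbbell" nodal domains (Remark \ref{Neck Region}, and Example \ref{Example Leon Simon}), and the uniform two-sided estimate requires the extra Quantitative Connectedness hypothesis (Definition \ref{Quantitative Connectedness}), which general nodal domains do not satisfy. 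So your single-component oscillation decay $\osc_{\Omega\cap B_r}f\le\theta\,\osc_{\Omega\cap B_{2r}}f$ cannot be justified with $\theta=\theta(\Lambda,N_0)<1$, and the subsequent gluing-across-$Z(u)$ argument cannot rescue it. The cone-like structure at $x_0$ and the existence of corkscrew points at each dyadic scale do not prevent the neck-pinching that defeats the single-domain two-sided bound.

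The paper's proof of Theorem \ref{Holder Continuity} avoids this by never going component-by-component. The decisive observation is: if $m=\inf_{B_1}(v/u)$, then (by the strong maximum principle, Theorem \ref{Strong Maximum Principle}) the function $v^*=v-m\,u$ has the \emph{same} zero set as $u$ in the smaller ball, not merely a larger one, and $v^*u\ge 0$. One then applies Theorem \ref{Bound Frequency} (i.e.\ Theorem \ref{Main Frequency}, proved in Section \ref{Section Carleson}) to conclude $N_{v^*}\le D(\Lambda,N_0)$, and then Corollary \ref{Two Sides Bounds} gives a \emph{global} two-sided estimate $C^{-1}\le |v^*/u|\le C$ on the smaller ball with $C=C(\Lambda,N_0)$. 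This global two-sided bound — made possible precisely because $v^*$ and $u$ share a zero set and both have controlled frequency — is what replaces your per-component boundary Harnack, and it delivers the oscillation decay in one shot with no gluing needed. In short: the missing idea in your proposal is to pass from $v$ to $v-(\inf v/u)\,u$, control its frequency via the $Z(u)=Z(v^*)$ version of Theorem \ref{Main Frequency}, and then use the global Corollary \ref{Two Sides Bounds}; without it, the per-component two-sided boundary Harnack you rely on is simply false with uniform constants.
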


For constants in the form of $C = C(\Lambda, N_0)$, we mean that the constants depend on 
$N_0$ and the conditions on the coefficients in (\ref{Coefficient 1}) and (\ref{Coefficient 
2}) of the operator $\mc{L}$. Here $N_u$ 
is the frequency function (doubling index) of $u$ on $B_{10}$, which will be reviewed in 
Section \ref{Section PRE}. Various equivalent notations and auxiliary lemmas are discussed
in Section \ref{Section PRE}.

The above theorem is derived, as in earlier works, from the upper bound inequality
\begin{equation}\label{Upper Bound Inequality}
	\sup_{B_1} |v/u| \leq C(\Lambda, N_0) \cdot (\sup_{B_8} |v| / \sup_{B_8} |u|),
\end{equation}
when $Z(u) \subset Z(v)$ and $N_u \leq N_0$. See Theorem \ref{Local Boundedness}. In order to get the H\"{o}lder continuity for $v/u$, one also needs an iterative argument involving improvements of upper and lower bounds as in \cite{CFMS81} and \cite{JK82}). The latter
is based on the following Harnack type estimate: 
\begin{equation}
	(\sup_{B_{1/8}}(v/u) - \inf_{B_{1}}(v/u )) \leq C(\Lambda,N_0) \cdot (\inf_{B_{1/8}}(v/u) - \inf_{B_{1}}(v/u)).
\end{equation}

To prove this Harnack type estimate, we need to show that the frequency of the function $v - u \cdot\inf_{B_1}(v/u)$ is also bounded in a smaller ball like $B_{1/4}$. 

The above leads us to the next more general result which says that, if two solutions of two possibly different elliptic partial differential equations have the same nodal set in $B_{10}$, and if one of the solution has a bounded frequency or 
a fixed growth rate, then the other has to have a bounded frequency and growth rate as well. We remark that, it is in this latter
statement that we require both operators $\mc{L}, \mc{L}_1$ to have Lipschitz continuous coefficients. In fact, it can be shown that the conclusion is not valid if operators are uniformly elliptic with only bounded measurable coefficients.

\begin{theorem}\label{Main Frequency}
    \textit{
    Suppose that $\mc{L}(u) = \mc{L}_1(v) = 0$ in $B_{10}$, and $0 \in Z(u) = Z(v) $. Also assume that $N_u \leq N_0 < \infty$. Then, there is a positive constant $D = D(\Lambda, N_0) < \infty$, such that $N_v(0,1) \leq D$.
    }
\end{theorem}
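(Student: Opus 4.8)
The plan is to deduce $N_v(0,1)\le D$ from a two-sided comparison $|v|\asymp|u|$ on a fixed ball and to establish that comparison through a Harnack inequality for the quotient $f:=v/u$; the identity $Z(u)=Z(v)$ enters precisely in the lower half of the comparison. Writing the Almgren--Garofalo--Lin frequency of $v$ relative to $\mc{L}_1$ and using the Caccioppoli inequality for $\mc{L}_1$ --- one place where the Lipschitz regularity of $\mc{L}_1$ is needed, so that this frequency formalism is available for $v$ at all --- one has $N_v(0,1)\le C(\Lambda)\int_{B_{3/2}}v^2\big/\int_{\partial B_1}v^2$. Hence it suffices to produce $0<c_0\le C_0$ with $C_0/c_0\le C(\Lambda,N_0)$ and
\[
c_0\,|u(x)|\ \le\ |v(x)|\ \le\ C_0\,|u(x)|\qquad (x\in B_{3/2});
\]
the right-hand side of the displayed frequency bound is then $\le (C_0/c_0)^2\int_{B_{3/2}}u^2\big/\int_{\partial B_1}u^2$, and this last ratio is $\le C(\Lambda,N_0)$ because $N_u\le N_0$ bounds $s\mapsto s^{1-n}\int_{\partial B_s}u^2$ on $(0,3/2]$ by a constant multiple of its value at $s=1$.

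The upper half of the displayed inequality is already available. Since $0\in Z(u)\subset Z(v)$ and $N_u\le N_0$, Theorem \ref{Main Holder} (in its version on $B_{3/2}$, a routine consequence of the stated form since $B_{3/2}\Subset B_{10}$) gives $f=v/u\in C^\alpha(B_{3/2})$, while Theorem \ref{Local Boundedness}, i.e.\ the inequality (\ref{Upper Bound Inequality}), gives $\sup_{B_{3/2}}|f|\le C(\Lambda,N_0)\,\sup_{B_8}|v|/\sup_{B_8}|u|<\infty$. So $C_0:=\sup_{B_{3/2}}|f|$ is controlled, and what remains is a Harnack-type lower bound, i.e.\ $\sup_{B_{3/2}}|f|\le C(\Lambda,N_0)\,\inf_{B_{3/2}}|f|$.

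To obtain this, I would first show that $f$ has no zero in $B_2$. If $f(x_0)=0$ then $x_0\in Z(u)=Z(v)$, and since $f$ is bounded the order of vanishing $d_v(x_0)$ of $v$ at $x_0$ is strictly larger than that of $u$; passing to blow-ups, the leading homogeneous parts $P_u$ of $u$ and $P_v$ of $v$ at $x_0$ are polynomials harmonic for the frozen operators $A(x_0)$, $A_1(x_0)$, with $\deg P_v>\deg P_u$, yet $Z(P_u)=Z(P_v)$ since both equal the tangent cone of $Z(u)=Z(v)$ at $x_0$; this contradicts the classical fact that a homogeneous polynomial which is harmonic for a constant-coefficient elliptic operator has no real-zero-free factor and is square-free, hence is determined up to a constant --- in particular in its degree --- by its real zero set. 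Thus $f$ is continuous and nowhere zero on the connected set $B_2$, so it has constant sign, say $f>0$. Next, a direct computation from $\mc{L}u=0$, $\mc{L}_1v=0$ gives, on $B_2\setminus Z(u)$,
\[
\dv\!\big(u^2 A_1\,\nabla f\big)\ +\ c(x)\,u^2\,f\ =\ 0,\qquad c(x)=\tfrac{1}{u}\,\dv\!\big((A_1-A)\nabla u\big),
\]
where $c$ vanishes identically when $\mc{L}_1=\mc{L}$ and, since $A_1-A$ is Lipschitz, is in general such that $c\,u^2$ is bounded and vanishes on $Z(u)$; moreover $f$ is in fact a weak solution of this degenerate equation across $Z(u)$ too (the observation underlying the Logunov--Malinnikova theory, \cite{LM16}), and the weight $u^2$ is a Muckenhoupt $A_2$ weight on $B_9$ with constant controlled by $(\Lambda,N_0)$ because $N_u\le N_0$. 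The Fabes--Kenig--Serapioni Harnack inequality for nonnegative solutions of such degenerate equations, absorbing the lower-order term, then yields $\sup_{B_{3/2}}f\le C(\Lambda,N_0)\inf_{B_{3/2}}f$, which completes the argument.

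The main obstacle, and where I expect most of the work to go, is the non-vanishing of $v/u$ --- equivalently the rigidity that $Z(u)=Z(v)$ forces the vanishing orders of $u$ and $v$ to coincide at every point of $Z$ (the blow-up step above must be made rigorous uniformly, and to handle possibly complicated singular strata of $Z$ one may also invoke Logunov's lower bound on nodal volume \cite{L182}). This is exactly the kind of statement that relies on the blow-up analysis available for Lipschitz coefficients, which is consistent with the Remark that the conclusion fails for merely bounded measurable coefficients. A secondary technical point is verifying that the quotient equation above, with the singular zero-order term appearing when $\mc{L}\ne\mc{L}_1$, still falls within a Harnack theory whose constants depend only on $(\Lambda,N_0)$.
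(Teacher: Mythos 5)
Your overall plan---reduce the frequency bound to a two-sided comparison $|v|\asymp|u|$, and obtain the lower half from a Harnack inequality for $f=v/u$---is genuinely different from the paper's proof, but two of the steps have concrete problems.

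First, citing Theorem \ref{Main Holder} (equivalently Theorem \ref{Holder Continuity}) to obtain $f\in C^\alpha(B_{3/2})$ is circular: the paper's proof of that H\"older continuity explicitly invokes Theorem \ref{Bound Frequency}, which is exactly the statement you are trying to prove. Theorem \ref{Local Boundedness} is fine to use (it does not depend on the frequency bound for $v$), but it only gives $|f|\le C$ away from $Z(u)$; it does not, by itself, tell you that $f$ extends continuously across $Z(u)$, which you need to conclude that $f$ has a sign on a connected ball.

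Second, and more fundamentally, the claim that $u^2$ is a Muckenhoupt $A_2$ weight on $B_9$ is false, independently of any bound on $N_u$. The generic picture near $Z(u)$ is that $u$ vanishes to first order on a hypersurface, so $u^{-2}\sim\dist(\cdot,Z(u))^{-2}$ fails to be locally integrable; already $u(x)=x_1$ (frequency $1$) has $\fint_{B_1}u^{-2}=\infty$, so the pair $(\fint u^2)(\fint u^{-2})$ is not finite. Consequently the Fabes--Kenig--Serapioni degenerate Harnack theory cannot be applied to the divergence-form equation $\dv(u^2 A_1\nabla f)+cu^2 f=0$ in the way you propose. This is not a technicality that can be absorbed; it is precisely the obstruction that forced \cite{LM16} (and this paper) to develop alternative, more geometric arguments. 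Your blow-up argument for coincidence of vanishing orders is also only a sketch (the identification of the tangent cone of $Z(u)=Z(v)$ with the zero sets of both leading homogeneous polynomials, and the rigidity "same real zero set implies same degree" for homogeneous polynomials harmonic for a constant-coefficient operator, each need proof), though that part is plausibly repairable.

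For contrast, the paper's proof proceeds by contradiction using the compactness of $\mc{S}_{N_0}(\Lambda)$. Assuming a sequence $v_n$ whose doubling index at scale $1$ blows up, one extracts a limit $u_0$ of the companion solutions $u_n$, decomposes $B_\eta\setminus Z(u_0)$ into finitely many nodal domains (boundedly many by Corollary \ref{Number of Domains}), uses the Carleson estimate of Lemma \ref{Caffarelli's Lemma} to move the near-maximizer of $|v_n|$ a fixed distance away from $Z(u_n)$, and then runs a Harnack chain through a fixed nodal domain of $u_0$ (which contains the corresponding nodal domain of $u_n$ for large $n$) to reach an interior corkscrew point, contradicting the blow-up of the doubling index. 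This avoids degenerate PDE theory and $A_p$ weights entirely; what it uses in their place is the corkscrew/modified-Harnack-chain geometry of Theorem \ref{Stubborn Ball} and the Carleson estimate, which is also why it works with merely Lipschitz coefficients and does not require any analysis of vanishing orders.
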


Here, $N_v(0,1)$ is the frequency function for $v$ and ball $B_1(0)$. We emphasize again that $\mc{L}$ and $\mc{L}_1$ could be two different elliptic operators satisfying (\ref{Coefficient 1}) and (\ref{Coefficient 2}). This provides a local compactness property for a large class of solutions to such elliptic equations,
see \cite{GL86}.  

As a direct corollary of Theorem \ref{Main Holder} and Theorem \ref{Main Frequency}, we have the following theorem.

\begin{theorem}\label{Main Polynomial}
	\textit{
	Suppose that $\Delta(u) = \Delta(v) =0$ in $\mb{R}^n$, $u$ is a harmonic polynomial, and $Z(u) = Z(v)$. Then, there is a constant $c \in \mb{R}\backslash\{0\}$ such that $v = c \cdot u$.
    	}
\end{theorem}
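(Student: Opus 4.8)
The plan is to bootstrap from Theorem \ref{Main Frequency} and Theorem \ref{Main Holder} to an entire-function statement, and then use the polynomial growth of $u$ to force $v$ itself to be a polynomial of no larger degree, after which a classical rigidity argument pins down $v = c\cdot u$. First I would observe that since $u$ is a harmonic polynomial with $Z(u) = Z(v)$, after a translation we may assume $0 \in Z(u) = Z(v)$ (if $Z(u)$ is empty then $u$ is a nonzero constant, $Z(v) = \varnothing$ forces $v$ nowhere zero, and a nonvanishing harmonic function on $\mb{R}^n$ need not be constant in general — but here one uses that $v/u$ extends; actually the cleanest route is to note $Z(u)$ of a nonconstant harmonic polynomial is always nonempty and unbounded, and if $u$ is constant the claim is immediate from Liouville applied after noting $v$ has the same sign everywhere and $v/u$ is then entire harmonic-ratio... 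I will handle the trivial constant case separately). The degree-$d$ harmonic polynomial $u$ has frequency $N_u(0,R) = d$ for every $R$, in particular $N_u \le d =: N_0 < \infty$ on every ball.

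The main step is a scaling/rescaling argument. For each $R > 0$, apply the dilation $x \mapsto Rx$: the functions $u_R(x) := u(Rx)/\|u\|_{L^\infty(B_{10R})}$ and $v_R(x) := v(Rx)/\|v\|_{L^\infty(B_{10R})}$ are harmonic in $B_{10}$ (the Laplacian is scale invariant, so $\mc{L} = \mc{L}_1 = \Delta$ throughout), satisfy $Z(u_R) = Z(v_R) \ni 0$, and $N_{u_R}(0,1) = N_u(0,R) = d \le N_0$. Theorem \ref{Main Frequency} then gives $N_{v_R}(0,1) \le D(\Lambda, N_0)$ with $D$ independent of $R$; unwinding the scaling, $N_v(0,R) \le D$ for all $R > 0$. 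A function on $\mb{R}^n$ with uniformly bounded frequency (doubling index) at all scales is a polynomial: the doubling estimate $\sup_{B_{2R}} |v| \le 2^{C(D)} \sup_{B_R} |v|$ iterates to polynomial growth $|v(x)| \le C(1+|x|)^{C(D)}$, and a harmonic function of polynomial growth is a harmonic polynomial of degree $\le C(D)$ (this is a standard consequence of Liouville-type theorems / mean-value expansion — I would cite it rather than reprove it, or alternatively invoke the frequency-function characterization directly from Section \ref{Section PRE}).

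Now $u$ and $v$ are both harmonic polynomials with $Z(u) = Z(v)$. Apply Theorem \ref{Main Holder} on every ball $B_R$ (after rescaling as above, using $0 \in Z(u) \subset Z(v)$): the ratio $f := v/u$ is $C^\alpha$ on each $B_R$, hence continuous on all of $\mb{R}^n$, and in particular $f$ extends continuously across $Z(u)$. The upper bound inequality \eqref{Upper Bound Inequality}, rescaled, gives $\sup_{B_R}|f| \le C(\Lambda,N_0)\, \sup_{B_{8R}}|v| / \sup_{B_{8R}}|u|$; since $u$ and $v$ are polynomials of degrees $d$ and $d'$ with $d' \le C(D)$, the right-hand side grows at most like $R^{d'-d}$. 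Symmetrically, swapping the roles of $u$ and $v$ — which is legitimate because $Z(v) = Z(u)$ and $N_v$ is now known to be bounded — gives $\sup_{B_R}|1/f|$ controlled, forcing $d' = d$ and $f$ globally bounded. A bounded continuous function that is real-analytic off the analytic set $Z(u)$ (where $v/u$ is, away from $Z(u)$, smooth; the boundary Harnack machinery of the paper shows it does not blow up) must be constant: concretely, $f$ is bounded and harmonic-ratio, and one can argue that $v - c\cdot u$ with $c := f(0)$ (a limiting value along $\Omega$) has a strictly larger nodal set than $u$ unless it vanishes, contradicting, e.g., the frequency bound, or more simply $v - cu$ is a polynomial vanishing on $Z(u)$ and of degree $\le d = \deg u$ with $Z(u)$ not contained in any hyperplane's worth of lower-degree vanishing, forcing $v - cu \equiv 0$.

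The hard part will be the last rigidity step: turning ``$v/u$ is a bounded globally-defined ratio of harmonic polynomials with the same nodal set'' into ``$v/u$ is constant.'' The degree-counting via \eqref{Upper Bound Inequality} handles the growth, but concluding constancy from boundedness requires either a Liouville statement for the analytic ratio $f$ (available from \cite{LM16}-type results once we know $Z(u)$ is a fixed analytic variety, since $f$ is then real-analytic on all of $\mb{R}^n$ including across $Z(u)$, and a bounded entire real-analytic function need not be constant — so one truly needs the harmonic-quotient structure, e.g. that $f$ satisfies a second-order elliptic equation $\dv(u^2 \nabla f) = 0$ and is bounded, hence constant by an energy/Liouville argument using that $u^2$ is a polynomial weight), or else a direct polynomial argument: write $v = cu + r$ with $c$ chosen so $\deg r < \deg u$ if possible, note $r$ vanishes on $Z(u)$, and derive a contradiction with $N_r$ being forced below $d$ while $r$ must vanish on the full degree-$d$ nodal variety. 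I would pursue the elliptic-equation-for-$f$ route, since the weight $u^2$ is explicit and the boundedness of $f$ is already in hand from the two-sided growth estimate.
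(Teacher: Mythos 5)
Your scaffolding matches the paper's: rescale to get $N_v(0,R)\le D$ for all $R>0$ from Theorem \ref{Main Frequency}, conclude $v$ is a polynomial, and use the two-sided bound to get $v/u$ globally bounded. The paper packages this as Corollary \ref{Liouville Theorem}, and the global bound (\ref{Global Bound}) is exactly your two-sided estimate.

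But you stop short at what you call ``the hard part,'' and the two routes you sketch there either have a gap or are harder than necessary, while the paper's own closing move is already in your hands and you do not use it. The paper finishes via the \emph{scale-invariant oscillation decay} (\ref{Oscillation Decay}): iterating the H\"older step of Theorem \ref{Holder Continuity} under the scaling that is exact for the Laplacian gives
\[
\osc_{B_r}\frac{v}{u}\ \le\ \theta^k\,\osc_{B_{100^k r}}\frac{v}{u}\ \le\ \theta^k M_2
\]
for all $k$, where $M_2$ is the global bound; letting $k\to\infty$ kills the oscillation on every ball, so $v/u$ is constant. You invoke Theorem \ref{Main Holder} only to get continuity of $v/u$, not the quantitative decay rate that makes the Liouville argument immediate — that is the missing observation. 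Your alternative (a), a Liouville theorem for $\dv(u^2\nabla f)=0$, is not free: $u^2$ is a degenerate weight vanishing on $Z(u)$, and establishing a Liouville statement for that operator would require essentially the same boundary-Harnack machinery you are trying to shortcut. Your alternative (b), writing $v=cu+r$ with $\deg r<\deg u$, presupposes that the top-degree homogeneous parts of $u$ and $v$ are proportional (``with $c$ chosen so $\deg r<\deg u$ \emph{if possible}''); but that proportionality of leading parts is precisely the homogeneous case of the theorem (Theorem~1.2 of \cite{LM16}), so this route is circular as stated. Also your worry about the constant case is unnecessary: for $n\ge 2$ a nowhere-vanishing entire harmonic function is one-signed, hence constant by Harnack/Liouville, so $Z(u)=\varnothing$ forces both $u$ and $v$ constant directly. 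In short: the frequency-scaling and boundedness steps are right and match the paper; the gap is that you did not close with the iterated oscillation decay, and the substitutes you propose do not close it.
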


When $u$ is a homogeneous harmonic polynomial, this theorem was proved, see Theorem 1.2 in \cite{LM16}. In Corollary \ref{Liouville Theorem}, we proved, in fact, a bit stronger statement. The condition that $u$ is a polynomial is important for \cite{LM16}. For example, let $u_{a,b}(x,y,z) = \sin(z)e^{ax + by}$ 
 and $a^2 + b^2=1$, then harmonic functions $u_{a,b}$ share the same nodal set, but with exponential growth. The work \cite{LM15} described many interesting examples of harmonic functions sharing the same nodal set either locally or globally.

In connection to harmonic/PDE analysis on non-smooth domains (e.g., \cite{CFMS81} and \cite{JK82}), we also established Carleson type estimates like (\ref{Upper Bound Inequality}) on a single nodal domain $\Omega$ (defined by a solution $u_0$). It should be noted that, in general, one cannot expect continuity (or even boundedness) up to  the boundary $\partial \Omega$ for the ratio $v/u$ if $v$ and $u$ are solutions defined only on this single $\Omega$.
See section \ref{Section Single}, Theorem \ref{Boundary Harnack in Single Domain} and the Example \ref{Example Leon Simon}.
 Another main result we established is the following statement:

\begin{theorem}\label{Main Measure}
    \textit{
    Let $\Omega$ be a nodal domain of a solution $u_0 \in \mc{S}_{N_0}(\Lambda)$ with $\mc{L}_0(u_0) = 0$ 
    and $0 \in \pa \Omega$ . Then, there is a set consisting of bounded number of points $\{x_1 , \dots , x_{T_0}\}$ with 
$T_0 = T_0 (\Lambda,N_0)$ in $\Omega \cap B_2$, such that
        \begin{equation}\label{Absolute Continuity}
            C^{-1} \cdot |\nabla u_0 | \cdot H^{n-1} \llcorner (\pa \Omega \cap B_1) \leq \sum_i ^{T_0} \ \omega_i \llcorner (\pa \Omega \cap B_1) \leq C \cdot | \nabla u_0 |  \cdot H^{n-1} \llcorner (\pa \Omega \cap B_1) 
        \end{equation}
    for some positive constant $C = C(\Lambda, N_0)$, where $\{ \omega_i (\cdot)\}$'s are $\mc{L}_0$-harmonic 
measures on $\pa (\Omega \cap B_5)$ with poles at $x_i \in \Omega \cap B_2$ for $i = 1,2, \dots, T_0$. In particular, $\sum_i ^{T_0} \ \omega_i \llcorner (\pa \Omega \cap B_1)$, $H^{n-1} \llcorner (\pa \Omega \cap B_1) $ and $|\nabla u_0| \cdot H^{n-1} \llcorner (\pa \Omega \cap B_1) $
are mutually absolutely continuous.
    }
\end{theorem}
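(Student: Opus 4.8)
The plan is to realize $\sum_i\omega_i$, at $H^{n-1}$-a.e.\ point of $\pa\Omega\cap B_1$, as a sum of conormal derivatives of the $\mc{L}_0$-Green function $G$ of $U:=\Omega\cap B_5$, and to compare these with the conormal derivative of $u_0$ itself; since $u_0$ is $\mc{L}_0$-harmonic and vanishes on the smooth part of $\pa\Omega$, that conormal derivative is comparable to $|\nabla u_0|$ there by ellipticity, and the two bounds in \eqref{Absolute Continuity} will follow at once. I assume $u_0>0$ in $\Omega$ and normalize $\sup_{\Omega\cap B_{3/2}}u_0=1$; the doubling property of elements of $\mc{S}_{N_0}(\Lambda)$, applied along a chain of balls through the scale-one corkscrew point of $\Omega$ at $0$, then gives $\sup_{B_4}|u_0|\le C(\Lambda,N_0)$ and, conversely, $\sup_{B_{\rho}(z)}|u_0|\ge c(\Lambda,N_0)$ for all $z\in B_2$ and all fixed $\rho\ge\rho_0(\Lambda,N_0)$; in particular $|\nabla u_0|\le C(\Lambda,N_0)$ on $B_3$. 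Let $\Sigma=\{u_0=0,\ \nabla u_0=0\}$ be the singular part of the nodal set, so that $\dim_H\Sigma\le n-2$, $H^{n-1}(\Sigma)=0$, and on $\Gamma:=(\pa\Omega\cap B_1)\setminus\Sigma$ the set $\pa\Omega$ is locally a $C^{1,\alpha}$ hypersurface with $|\nabla u_0|>0$; recall also $H^{n-1}(\pa\Omega\cap B_1)\le C(\Lambda,N_0)<\infty$.

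First I would fix the poles. Choose $\rho_0=\rho_0(\Lambda,N_0)$ and let $\{z_i\}$ be a maximal $(\rho_0/2)$-separated subset of $\{x\in\Omega\cap B_{3/2}:\dist(x,\pa\Omega)\ge\rho_0\}$; it has at most $T_0=T_0(\Lambda,N_0)$ elements, and inside each $B_{\rho_0/2}(z_i)\subset\Omega$ we may (by the lower bound above) pick $x_i$ with $u_0(x_i)\ge c_1(\Lambda,N_0)>0$ and $\dist(x_i,\pa\Omega)\ge\rho_0/2$. The properties of the configuration $\{x_1,\dots,x_{T_0}\}$ that we need, all coming from the corkscrew and interior Harnack-chain structure of nodal domains of $\mc{S}_{N_0}(\Lambda)$-solutions proved in the earlier sections, are: (a) every point of $\Omega\cap B_4$ at distance $\ge\rho_0$ from $\pa\Omega$ is joined to some $x_i$ by a Harnack chain in $\Omega$ of length $\le L_0(\Lambda,N_0)$; and (b) every corkscrew point $A_r(q)$ with $q\in\pa\Omega\cap B_1$ and $r<\rho_0$ is joined to some $x_{i(q)}$ by a telescoping Harnack chain of length $\le L_0(\Lambda,N_0)$. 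Property (b) is the heart of the matter: it is precisely the statement that a bounded number of interior centers, together with bounded-length chains, reach every boundary point at every small scale in a uniform way, and this is where the (quasi scaling-invariant) geometry of nodal domains, together with the boundary Harnack inequality established earlier, must be used in full strength; it is also what forces $T_0=T_0(\Lambda,N_0)$.

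Granting (a), (b), I claim the ``globalization ratio'' $\Phi(q):=u_0(A_r(q))^{-1}\sum_i G(A_r(q),x_i)$ satisfies $c(\Lambda,N_0)\le\Phi(q)\le C(\Lambda,N_0)$ uniformly in $q\in\pa\Omega\cap B_1$ and small $r$. For the upper bound, for each $i$ the positive $\mc{L}_0$-solutions $G(\cdot,x_i)$ and $u_0$ on $\Omega\cap B_4\setminus\overline{B_{\rho_0/8}(x_i)}$ both vanish on $\pa\Omega\cap B_4$, and their ratio on the boundary of that region is controlled by a constant $C(\Lambda,N_0)$: on $\pa\Omega\cap B_4$ and on the part of $\Omega\cap\pa B_4$ near $\pa\Omega$ by the boundary Harnack inequality combined with the comparison of $G(\cdot,x_i)$ with $\mc{L}_0$-harmonic measure (as in \cite{CFMS81}, \cite{JK82}); on the part of $\Omega\cap\pa B_4$ far from $\pa\Omega$ by $G(\cdot,x_i)\le C\rho_0^{2-n}$ against $u_0\ge c(\Lambda,N_0)$ (interior Harnack plus the frequency lower bound); and on $\pa B_{\rho_0/8}(x_i)$ by $G(\cdot,x_i)\le C\rho_0^{2-n}$ against $u_0\ge c(\Lambda)u_0(x_i)\ge c(\Lambda,N_0)$. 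By the maximum principle, $G(\cdot,x_i)\le C(\Lambda,N_0)u_0$ there, so $\Phi(q)\le C(\Lambda,N_0)$. For the lower bound, at a reference point $\xi_i\in\pa B_{\rho_0/8}(x_i)$ one has $G(\xi_i,x_i)\ge c\rho_0^{2-n}$ and $u_0(\xi_i)\le C(\Lambda,N_0)$, hence $G(\xi_i,x_i)/u_0(\xi_i)\ge c(\Lambda,N_0)$; transporting this lower bound along the chain in (b) via the telescoping form of the boundary Harnack inequality costs only a factor $C(\Lambda,N_0)^{L_0}$, so $G(A_r(q),x_{i(q)})\ge c(\Lambda,N_0)u_0(A_r(q))$ and $\Phi(q)\ge c(\Lambda,N_0)$.

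Finally, granting $\Phi\simeq1$, I would fix $q\in\Gamma$ and $r$ small enough that $\pa\Omega$ is a $C^{1,\alpha}$ graph in $B_{2r}(q)$ and $x_i\notin B_{2r}(q)$ for all $i$. In this graph domain the boundary Harnack inequality shows each $G(\cdot,x_i)/u_0$ extends continuously up to $\pa\Omega\cap B_r(q)$, with limit the ratio of inward conormal derivatives, and this limit is comparable with constant $C(\Lambda,N_0)$ to $G(A_r(q),x_i)/u_0(A_r(q))$. Summing in $i$, and using that on the smooth boundary the density of $\omega_i$ is the conormal derivative of $G(\cdot,x_i)$, that the conormal derivative of $u_0$ is comparable to $|\nabla u_0|$, and that $\Phi\simeq1$, one gets
\begin{equation*}
  C^{-1}\,|\nabla u_0(q')|\ \le\ \frac{d\big(\sum_i\omega_i\big)}{dH^{n-1}}(q')\ \le\ C\,|\nabla u_0(q')|,\qquad C=C(\Lambda,N_0),
\end{equation*}
for $H^{n-1}$-a.e.\ $q'\in\pa\Omega\cap B_1$. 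That $\sum_i\omega_i$ puts no mass on $\Sigma\cap B_1$ follows from the estimate $\omega_i(\pa\Omega\cap B_s(\sigma))\le Cs^{n-2}G(A_s(\sigma),x_i)\le Cs^{n-2}u_0(A_s(\sigma))\le Cs^{n-1}$ (using the upper bound just obtained and $u_0(\sigma)=0$, $|\nabla u_0|\le C$) together with $\dim_H\Sigma\le n-2$. Integrating the displayed inequality over $\pa\Omega\cap B_1$ gives \eqref{Absolute Continuity}, and since $0<|\nabla u_0|<\infty$ holds $H^{n-1}$-a.e.\ on $\pa\Omega\cap B_1$, the three measures $\sum_i\omega_i\llcorner(\pa\Omega\cap B_1)$, $H^{n-1}\llcorner(\pa\Omega\cap B_1)$ and $|\nabla u_0|\cdot H^{n-1}\llcorner(\pa\Omega\cap B_1)$ are mutually absolutely continuous. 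Apart from property (b) above, which is the genuine obstacle, the rest is the classical boundary-Harnack/Green-function bookkeeping together with the dimension bound on the nodal singular set.
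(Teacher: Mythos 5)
Your overall plan---realize $\sum_i\omega_i$ via conormal derivatives of the Green function of $\Omega\cap B_5$, compare with $\partial_\nu u_0\simeq|\nabla u_0|$ on the $C^{1,\alpha}$-smooth part of $\partial\Omega$, and dispose of the singular set by its $(n-2)$-dimensional bound---is the same as the paper's, and your choice of poles via a $\rho_0$-separated net is functionally equivalent to the paper's ``big chunk'' construction in Corollary \ref{Big Chunk Bound}. The upper bound by maximum principle is also the paper's argument, modulo some sloppiness (you bound the ratio $G/u_0$ on a piece of $\partial(\Omega\cap B_4)$ where both functions vanish; the paper avoids this by comparing $G$ and $Cu_0$ directly on $\partial(\Omega\cap B_5)\setminus B_{r/4}(x_0)$, where $G$ vanishes identically). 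But there is a genuine gap in your lower bound, and it is exactly at the point where the paper's approach and yours diverge.

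Your property (b) asserts that every corkscrew point $A_r(q)$, for every $q\in\partial\Omega\cap B_1$ and every $r<\rho_0$, is joined to some pole $x_{i(q)}$ by a Harnack chain of length bounded by $L_0(\Lambda,N_0)$, and you ``transport'' the lower bound on $G(\cdot,x_i)/u_0$ along such a chain at a cost $C^{L_0}$. This cannot work: a chain joining a point at distance $\sim r$ from $\partial\Omega$ to a point at distance $\sim\rho_0$, with steps whose size is controlled by the distance to the boundary, must have length at least $\sim\log(\rho_0/r)$, which is unbounded as $r\to 0$. So $L_0$ cannot be uniform in $r$, and the transport cost blows up. Replacing the chain count by an appeal to a ``telescoping form of the boundary Harnack inequality'' does not save it either, because that requires the two-sided boundary Harnack inequality with a constant that is uniform over scales, and the paper's Remark \ref{Neck Region} gives an explicit counterexample (a nodal domain with a thin neck) showing exactly that this uniform two-sided estimate fails on nodal domains: Theorem \ref{Boundary Harnack in Single Domain} gives a two-sided estimate only with a constant that depends on the oscillation ratio $C_2/C_1$ on the corkscrew region, which is not controlled by $\Lambda$ and $N_0$.

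The paper's mechanism for the lower bound is different and is designed precisely to avoid this issue. Rather than propagating a ratio bound from the pole to small scales along a chain, one works at a single fixed scale $\rho\sim c_4$ around each boundary point $q$, and applies the one-sided Carleson estimate (Lemma \ref{Caffarelli's Lemma} / Theorem \ref{Boundary Harnack in Single Domain}, whose constant \emph{is} uniform) with $v=u_0$ in the numerator and $u=\sum_iG(\cdot,x_i)$ in the denominator. The key point is that the big-chunk placement of the poles guarantees $\sum_iG(\cdot,x_i)\geq c(\Lambda,N_0)>0$ on the entire corkscrew region $\{y:\delta(y)\geq r\}$ at that fixed scale, because every such $y$ lies in a chunk $E_j$ containing a pole $x_j$, and $G(\cdot,x_j)$ is bounded below on $E_j$ by a Harnack chain \emph{of bounded length inside the chunk}. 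Since $\sup u_0\leq1$ by the normalization, Theorem \ref{Boundary Harnack in Single Domain} then yields $u_0(z)\leq C\sum_iG(z,x_i)$ for all $z\in\Omega$ in a fixed-radius neighborhood of $q$---not only at corkscrew points. Letting $z\to q$ nontangentially at a smooth boundary point gives the density bound directly, with no telescoping and no dependence on the local $C^{1,\alpha}$ character. Your proof should replace property (b) and the chain-transport step with this one-shot application of the uniform one-sided Carleson estimate; the rest of your argument then goes through.
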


Note that it is necessary in general to choose more than one of such points $x_i$ and the corresponding 
harmonic measures in order to have the two-sided estimates as shown in the above theorem. If one selects only one of such points (and its associated harmonic measure), then only the right half inequality of (\ref{Absolute Continuity}) is true in general. We shall also point out that, the 
locations of these points $\{x_i\}$, while flexible, may depend on a particular nodal domain (and hence the defining function $u_0$). What is important is that one can always choose such points; moreover the number of such points $\{x_i\}$ is uniformly bounded (by $T_0$) for any nodal domain of $u_0$ for all $u_0 \in \mc{S}_{N_0}(\Lambda)$.
   
There are two basic ingredients in proving these results. One is the validity of the corkscrew condition and the existence of modified 
Harnack chains for this class of nodal domains. Such geometric structural properties make these nodal domains very similar to the NTA domains,
see \cite{JK82}. The other is the Carleson type estimates for solutions as in \cite{CFMS81}, \cite{JK82} and \cite{LM16}. We shall show, by the doubling properties of the defining functions (solutions), that nodal domains possess these desired geometric properties. One then established the Boundary Harnack princple and Carleson type estimates for non-negative solutions of uniformly elliptic operators with bounded measurable coefficients on such domains. The latter may be a useful fact for applications to some elliptic free boundary problems.

To end the descriptions of main results, let us show an example due to Leon Simon, see \cite{HHHN99}.
\begin{example}\label{Example Leon Simon}
	Let $f(z)$ be a smooth function on $\mb{R}$ with $|f''| < 1/2$. The function $ u(x,y,z) = xy+ f(z) $ satisfies the elliptic equation $\partial^2 _{xx} u + \partial^2 _{yy} u + \partial^2 _{zz}  u - (f''(z)) \partial^2_{xy}u = 0$. Then, the singular set of $Z(u)$ is $\{(0,0,z) \in \mb{R}^3 \ \vert \ f(z) = f'(z) = 0 \}$.
\end{example}

One can choose a smooth (even analytic) and sufficiently small $f$ such that around the singular set of $Z(u)$, the $Z(u)$ behaves like many double cones and $u$ only has two nodal domains. The topology of the nodal domains of $u$ and its critical set can be unbounded (in smooth case). While the frequency of the solution $u$ is close to $2$. And Carleson type estimates as well as the boundary Harnack principle are still valid among other conclusions proved here.

\subsection{Structure of the Paper}

In section \ref{Section PRE}, we go over some tools and basic facts that will be used in the paper. In particular, the notions of the frequency function, the doubling index, and the three spheres theorems.
In section \ref{Section Corkscrew}, we will show the corkscrew property and a modified Harnack chain 
property for this class of nodal domains. Our arguments generalize that in \cite{LM16}.
In section \ref{Section BDD}, we will first show that the ratio $v/u$ is locally bounded near the nodal sets, and then give the proofs for Theorem \ref{Main Holder}. We will also discuss the entire solutions and prove Theorem \ref{Main Polynomial}.
In section \ref{Section Carleson}, we will establish the Carleson type estimates and prove the Theorem 
\ref{Main Frequency}.
In Section \ref{Section Single}, we will discuss the boundary Harnack principle on a single given nodal domain 
and then we prove Theorem \ref{Main Measure}.

\begin{remark}
    Although we only consider the elliptic operators in divergence form in this article, one could 
easily extend all results in this article to the elliptic operators in non-divergence form with 
Lipschitz continuous leading coefficients. It would be also interesting to obtain a parabolic counter 
part.
\end{remark}

{\bf Acknowledgements} The research of authors are partially supported by the NSF grant, 
DMS1955249.

\section{Preliminaries and Tools}\label{Section PRE}

Let $w$ be a $W^{1,2}$-solution of an elliptic equation in divergence form in $B_{10} \subset \mb{R}^n$ (the Euclidean ball with radius equaling to $10$ and center at $0$),
    \begin{equation}\label{Divergence Equation}
        \mc{L}(w) \equiv \mr{div}(A(x)\nabla w(x)) = 0 , 
    \end{equation}
where the symmetric matrix-valued function, $ A(x) = (a_{ij})_{n\times n}$, satisfies
    \begin{equation}\label{Coefficient 1}
        \ \lambda \cdot \mr{I} \leq A \leq \lambda^{-1} \cdot  \mr{I}\ ,
    \end{equation}
with Lipschitz entries, 
    \begin{equation}\label{Coefficient 2}
       \|a_{ij} \|_{\mr{Lip}} \leq \Lambda_1,
    \end{equation}
for some positive constants $\lambda$ and $\Lambda_1$. In this paper, we write $\mc{L}, \mc{L}_1$ etc. for elliptic
operators which satisfy the conditions (\ref{Divergence Equation}) (\ref{Coefficient 1}) and (\ref{Coefficient 2}). We use $L, L_1$ etc. to denote uniformly elliptic operators that satisfy only (\ref{Divergence Equation}) and (\ref{Coefficient 1}). 
For simplicity, we will use the
notation $C(\Lambda)$ to denote positive constants which depend only on $\lambda$, 
$\Lambda_1$ and $n$ and call them universal constants. And we shall use $C(\lambda)$ for 
constants depending only on the (\ref{Coefficient 1}) and the dimension $n$. Most of 
constants appeared in the paper will depend only on the dimension $n$, the ellipticity 
constant $\lambda$ and the doubling constant $N_0$ for solutions of such uniformly elliptic 
operators $L$.
By the standard interior estimates, if $\mc{L}(w)=0$ in $B_{10}$, then $w$ is in $C^{1,\alpha}(B_9)$ for any $\alpha \in (0,1)$. For general uniformly elliptic operators $L$, one has $w$ is in $C^{\alpha}$ for some positive $\alpha$ by De Giorgi's theorem, see \cite{HL11}. We define $Z(w) \equiv \{x\in B_{10} | \ w(x) =0 \}$ as the zero set of $w$ in $B_{10}$. For any point $x \in B_9$, we also define $\delta_w (x) \equiv \dist(x,Z(w))$ and use $\delta(x)$ if there is no ambiguity.

\subsection{Frequency Function and Doubling Index}\label{Introduction to Frequency}
Let us first recall the frequency function, which goes back to works of Agmon \cite{A66} and Almgren \cite{A79}, and was further developed 
in \cite{GL86}, see also \cite{L91}. This is a useful ingredient in estimating the size of nodal sets and the size of critical sets.
We refer to \cite{LM19} for more recent developments with much improved sharp results 
and other applications of the frequency functions. For the convenience we recall and collect 
a few basic facts about the frequency function and its important consequences.

The frequency function for a solution $w$ of $\mc{L}(w) = 0$ is defined as 
    \begin{equation}
        N_w(B_r(x_0)) =  N_w (x_0,r) = \frac{r \cdot \int_{B_r(x_0)} \lag A(x)\nabla w , \nabla w \rag }{\int_{\pa B_r(x_0)} \mu(x) |w|^2} \ ,
    \end{equation}
and we also set
    \begin{equation}
        H_w(x_0,r) = r^{1-n} \int_{\pa B_r(x_0)} \mu(x) |w|^2 ,
    \end{equation}
where $\mu(x) = \lag A(x)x,x \rag /|x|^2$, $0<\lambda < \mu(x) < n \cdot \lambda^{-1}, 
x_0 \in 
B_9$. Here $B_r(x_0)$ is the Euclidean ball with radius equaling to $r$ and center at $x_0$. If $x_0 = 0$, we use $B_r = B_r(0)$. 
And if there is no ambiguity, we often use $N(r)$ and $H(r)$ (or $N_w(r)$ and $H_w (r)$) for simplicities. By \cite{GL86}, one has
    \begin{equation}\label{Derivative of H}
        \frac{H'}{H} = \frac{2 N(r)}{r} + O(1),
    \end{equation}
and $O(1)$ is bounded by a universal constant $C_1 = C_1(\Lambda)$. We then have the following monotonicity theorem from \cite{GL86}.

\begin{theorem}\label{Monotone Frequency}
    \textit{
    There is a positive constant $C_2 = C_2(\Lambda)$, such that $\exp(C_2r) \cdot N(r)$ is a nondecreasing function of $r$.
    }
\end{theorem}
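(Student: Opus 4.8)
The plan is to carry out the classical frequency computation of Almgren and Garofalo--Lin: differentiate $\log N(r)$ and control the three resulting logarithmic derivatives, using (\ref{Derivative of H}) for the $H$-term and a Rellich--Ne\v{c}as (Pohozaev-type) identity for the Dirichlet-energy term. First I would fix $x_0\in B_9$ and set, for $0<r<r_0:=9-|x_0|$, the energy $I(r)=\int_{B_r(x_0)}\langle A\nabla w,\nabla w\rangle$ and $\wti H(r):=r^{n-1}H(r)=\int_{\pa B_r(x_0)}\mu\,|w|^2$, so that $N(r)=rI(r)/\wti H(r)$. One may assume $w$ is nonconstant, since otherwise $N\equiv 0$ and there is nothing to prove; then $\wti H(r)>0$ and $I(r)>0$ on $(0,r_0)$, positivity of $\wti H$ being the standard consequence of the strong unique continuation property for $\mc L$. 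Since $w\in C^{1,\alpha}_{\loc}$ by interior regularity and $A$ is Lipschitz, $I\in C^1$, $\wti H$ is locally Lipschitz, and hence so is $N$, so that at a.e.\ $r\in(0,r_0)$
\[
\frac{N'(r)}{N(r)} \;=\; \frac1r \;+\; \frac{I'(r)}{I(r)} \;-\; \frac{\wti H'(r)}{\wti H(r)},
\qquad
\frac{\wti H'(r)}{\wti H(r)} \;=\; \frac{n-1}{r} \;+\; \frac{2N(r)}{r} \;+\; O(1),
\]
the second identity following from (\ref{Derivative of H}) together with $\wti H=r^{n-1}H$, the $O(1)$ being bounded by $C_1(\Lambda)$.

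The crux is a matching lower bound for $I'(r)/I(r)$. Since $\mc L(w)=0$, integration by parts gives $I(r)=\int_{\pa B_r}w\,\langle A\nabla w,\nu\rangle$ with $\nu=(x-x_0)/|x-x_0|$, and differentiating in $r$ gives $I'(r)=\int_{\pa B_r}\langle A\nabla w,\nabla w\rangle$. Next I would pair the equation $\pa_i(a_{ij}\pa_j w)=0$ with the radial multiplier $\langle x-x_0,\nabla w\rangle$ --- or, in order to keep the principal terms exact, with the $A$-adapted radial field, which is precisely why the weight $\mu(x)=\langle A(x)x,x\rangle/|x|^2$ is built into the definitions of $N$ and $H$ --- and integrate twice by parts over $B_r(x_0)$. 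This produces an identity of the shape
\[
r\,I'(r) \;=\; (n-2)\,I(r) \;+\; 2\int_{\pa B_r}\frac{\langle A\nabla w,\nu\rangle^2}{\mu} \;+\; \mathrm{Err}(r),
\]
in which $\mathrm{Err}(r)$ collects exactly the terms generated by the derivatives of the coefficients; using $\|\nabla a_{ij}\|_\infty\le\Lambda_1$ from (\ref{Coefficient 2}) and $|\nabla w|^2\le\lambda^{-1}\langle A\nabla w,\nabla w\rangle$ one gets $|\mathrm{Err}(r)|\le C(\Lambda)\,I(r)$. This is the single point where the Lipschitz hypothesis on the coefficients is essential. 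Then Cauchy--Schwarz applied to $I(r)=\int_{\pa B_r}(w\sqrt\mu)\,(\langle A\nabla w,\nu\rangle/\sqrt\mu)$ gives $\int_{\pa B_r}\mu^{-1}\langle A\nabla w,\nu\rangle^2\ge I(r)^2/\wti H(r)=N(r)I(r)/r$, and hence $I'(r)/I(r)\ge (n-2)/r+2N(r)/r-C(\Lambda)$.

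Substituting the two bounds into the formula for $N'/N$, the terms $\tfrac1r+\tfrac{n-2}{r}-\tfrac{n-1}{r}$ cancel and the two copies of $2N(r)/r$ cancel, leaving $N'(r)/N(r)\ge -C_2(\Lambda)$ with $C_2:=C(\Lambda)+C_1(\Lambda)$; since $N>0$ this says $\tfrac{d}{dr}\big(e^{C_2 r}N(r)\big)\ge 0$ for a.e.\ $r\in(0,r_0)$, and as $e^{C_2 r}N(r)$ is locally Lipschitz this integrates to the asserted monotonicity. The step I expect to be the main obstacle is the Rellich--Ne\v{c}as identity above: one must choose the multiplier so that the non-diagonal boundary contribution $\int_{\pa B_r}\langle A\nabla w,\nu\rangle\,\langle\nu,\nabla w\rangle$ collapses, up to errors of size $O(\Lambda)I(r)$, to the ``good'' weighted term $\int_{\pa B_r}\mu^{-1}\langle A\nabla w,\nu\rangle^2$, and so that the bulk remainder is genuinely controlled by the Dirichlet energy $I(r)$ rather than by $I'(r)$. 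This is what forces passing to the metric determined by $A$ (equivalently, the use of the $A$-adapted radial field together with the weight $\mu$) and the Lipschitz regularity of the coefficients, and it is where essentially all of the work lies.
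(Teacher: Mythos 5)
The paper does not prove this statement itself: it records Theorem~\ref{Monotone Frequency} as a citation to Garofalo--Lin \cite{GL86} and moves on. Your proposal reconstructs precisely the \cite{GL86} argument (logarithmic differentiation of $N$, equation~(\ref{Derivative of H}) for the $\wti H$--term, a Rellich--Ne\v cas identity with an $A$-adapted radial multiplier plus Cauchy--Schwarz for the $I$--term), so in that sense it is the same approach as the one the paper invokes, and the overall structure and the final cancellation are correct.

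One algebraic slip worth flagging: the Rellich identity should carry an extra factor of $r$ on the boundary term, i.e.\ it reads
\[
r\,I'(r) \;=\; (n-2)\,I(r) \;+\; 2r\int_{\pa B_r}\frac{\langle A\nabla w,\nu\rangle^2}{\mu} \;+\; \mathrm{Err}(r),
\qquad |\mathrm{Err}(r)|\le C(\Lambda)\,r\,I(r),
\]
as one checks already for the Laplacian from the Pohozaev divergence identity
$\operatorname{div}\big(\tfrac{|\nabla w|^2}{2}x-(x\cdot\nabla w)\nabla w\big)=\tfrac{n-2}{2}|\nabla w|^2$ evaluated on $\pa B_r$, where $\langle x,\nu\rangle=r$ and $x\cdot\nabla w=r\,\pa_\nu w$. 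As you wrote it (without the $r$, and with $|\mathrm{Err}|\le C I$), dividing by $rI(r)$ and applying your Cauchy--Schwarz bound $\int_{\pa B_r}\mu^{-1}\langle A\nabla w,\nu\rangle^2\ge N(r)I(r)/r$ would only give $I'/I\ge (n-2)/r+2N(r)/r^{2}-C(\Lambda)/r$, which does not cancel against $\wti H'/\wti H$. With the corrected factor of $r$ (and the sharper error bound $C(\Lambda)rI(r)$, which is what the multiplier $\langle x-x_0,\nabla w\rangle$ actually produces since $|x-x_0|\le r$ on $B_r$), the line $I'/I\ge (n-2)/r+2N/r-C(\Lambda)$ follows exactly as you conclude, and the rest of the argument is sound.
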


A main consequence of Theorem \ref{Monotone Frequency} is the doubling estimate. By 
using (\ref{Derivative of H}), one has
    \begin{equation}\label{Doubling}
       \begin{split}
        \log\bigg(\frac{H(2R)}{H(R)}\bigg) &\leq \int_R ^{2R} 2 \exp(-C_2 r)\frac{\exp(C_2 r) N(r)}{r} \ \mr{dr} + C_1 R
            \\ &\leq C(\Lambda) \cdot N(2) + C_1(\Lambda) 
       \end{split}
    \end{equation}
for any $x_0 \in B_8$ and $R <1$. For $|\nabla w|$, we have a similar doubling 
estimate, which is also derived in \cite{GL86}.

\begin{theorem}\label{Doubling Condition}
    \textit{
    Assume that $w(0) = 0$ and $N_w(B_5) \leq N_0 < \infty$. Then for any $x \in B_2$ $R \in (0,1)$, we have
    \begin{equation}\label{Double Function}
        \int_{B_{2R}(x) } |w|^2 \leq 2^{K_1 N_0}\int_{B_{R}(x)} |w| ^2
    \end{equation}
    \begin{equation}\label{Double Gradient}
        \int_{B_{2R}(x) } |\nabla w|^2 \leq 2^{K_2 N_0}\int_{B_{R}(x)} |\nabla w| ^2
    \end{equation}
    for some universal constants $K_1$ and $K_2$.
    }
\end{theorem}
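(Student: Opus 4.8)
This is in essence the doubling estimate of Garofalo--Lin \cite{GL86}, so the plan is to follow their circle of ideas: everything rests on the almost--monotonicity of the frequency (Theorem \ref{Monotone Frequency}) together with the identity (\ref{Derivative of H}). First I would establish a uniform frequency bound. From $N_w(0,5)\le N_0$ and Theorem \ref{Monotone Frequency} one immediately gets $N_w(0,r)\le e^{5C_2}N_0$ for all $r\in(0,5]$; since monotonicity is a statement about a fixed center, the next point is the corresponding bound at an arbitrary center, namely $N_w(x,r)\le C(\Lambda)N_0$ for $x\in B_2$ and $r\in(0,2)$. Because $e^{C_2r}N_w(x,r)$ is nondecreasing in $r$, it is enough to bound $N_w(x,2)$, and this is the standard ``change of center'' estimate: one compares $H_w(x,\cdot)$ and $\int_{B_{\cdot}(x)}\lag A\nabla w,\nabla w\rag$ with the quantities centered at $0$ through the inclusions $B_r(x)\subset B_{r+|x|}(0)\subset B_5$ and the doubling at center $0$; I would quote this from \cite{GL86}. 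In particular (\ref{Doubling}) then holds with $N(2)$ replaced by $C(\Lambda)N_0$.

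Next I would prove the doubling for $w$ itself. Fix $x\in B_2$, $R\in(0,1)$, and write $M=C(\Lambda)N_0$. By the previous step and (\ref{Derivative of H}), $\frac{d}{dr}\log H_w(x,r)=\frac{2N_w(x,r)}{r}+O(1)$ on $(0,2R)$ with $|O(1)|\le C_1$; hence $e^{C_1r}H_w(x,r)$ is nondecreasing, while integrating the upper bound $\frac{d}{dr}\log H_w(x,r)\le \frac{2M}{r}+C_1$ gives $H_w(x,s)/H_w(x,t)\le (s/t)^{2M}e^{C_1(s-t)}$ for $0<t<s\le 2R$. Combining the two one obtains $c(\Lambda)2^{-2M}H_w(x,R)\le H_w(x,r)\le C(\Lambda)2^{2M}H_w(x,R)$ for all $r\in[R/2,2R]$. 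Since $\lambda\le\mu\le n\lambda^{-1}$, $\int_{B_\rho(x)}|w|^2\asymp\int_{B_\rho(x)}\mu|w|^2=\int_0^\rho r^{n-1}H_w(x,r)\,dr$; bounding the integral over $(0,2R)$ from above and the one over $[R/2,R]$ from below by the displayed control on $H_w$ yields $\int_{B_{2R}(x)}|w|^2\le 2^{K_1N_0}\int_{B_R(x)}|w|^2$ with $K_1=K_1(\Lambda)$, i.e.\ (\ref{Double Function}).

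Finally, and this is where I expect the real difficulty, I would treat the gradient. One direction is cheap: since the coefficients are Lipschitz, $w\in C^{1,\alpha}_{\loc}\cap W^{2,2}_{\loc}$, and Caccioppoli's inequality $\int_{B_\rho(x)}|\nabla w|^2\le C\rho^{-2}\int_{B_{2\rho}(x)}|w-c|^2$ (any constant $c$) together with (\ref{Double Function}) bounds $\int_{B_{2R}(x)}|\nabla w|^2$ from above by $C(\Lambda)2^{CN_0}R^{-2}\int_{B_R(x)}|w|^2$. To close I need a matching lower bound $\int_{B_R(x)}|\nabla w|^2\ge c(N_0)R^{-2}\int_{B_R(x)}|w|^2$; this is a reverse Caccioppoli type inequality and is exactly where $w(0)=0$ and the frequency bound must enter, since $w$ cannot be too flat (too close to a nonzero constant) on $B_R(x)$ when its vanishing order is uniformly bounded. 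Equivalently, and this is the route I think \cite{GL86} takes, one sets up a frequency--type functional for the vector field $\nabla w$: because $\partial_k w$ solves a divergence--form equation with right--hand side $-\dv\big((\partial_kA)\nabla w\big)$ whose size is controlled by $\Lambda_1$ — here the Lipschitz hypothesis (\ref{Coefficient 2}) is indispensable for the frequency method, and without it the conclusion fails — one proves an almost--monotonicity for this functional with error depending only on $\Lambda$, and then the first two steps carry over verbatim to $\nabla w$ to give (\ref{Double Gradient}). The bulk of the work, and the only genuinely nontrivial point, is controlling this perturbation term (equivalently, proving the reverse Caccioppoli inequality); the scalar part is routine once the change--of--center bound is in hand.
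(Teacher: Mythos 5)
The paper gives no proof of this theorem at all: it simply cites \cite{GL86}, so the comparison is really between your sketch and the standard Garofalo--Lin argument. Your outline for the scalar doubling (\ref{Double Function}) is correct and matches the standard route: almost-monotonicity of $N(x_0,\cdot)$, a change-of-center bound to propagate the frequency bound from the origin to $x\in B_2$, and then integrating $\frac{d}{dr}\log H_w(x,r)=\frac{2N_w(x,r)}{r}+O(1)$ to pass from the spherical quantity $H_w$ to the solid integral. That part is fine.

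The gradient part as you have written it has a genuine gap. The first route you propose --- Caccioppoli from above, plus a ``reverse Caccioppoli'' $\int_{B_R(x)}|\nabla w|^2\ge c(N_0)R^{-2}\int_{B_R(x)}|w|^2$ from below --- is based on a false inequality. Take the model case $\mc{L}=\Delta$, $w=x_1$, $x=(1,0,\dots,0)$, $R$ small. Then $w(0)=0$ and $N_w\le 1$, but $\int_{B_R(x)}|w|^2\sim R^n$ while $R^2\int_{B_R(x)}|\nabla w|^2=R^{n+2}$, so the putative lower bound fails with a loss $(R/|x|)^2\to 0$ as $R\to 0$. The hypothesis $w(0)=0$ does not rescue you: once $|x|>R$ the ball $B_R(x)$ need not meet $Z(w)$, and on such balls $w$ really can be essentially a nonzero constant, so any Poincar\'e--type lower bound with a universal constant is hopeless. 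In fact the two-sided sandwiching $\int_{B_{2R}}|\nabla w|^2 \lesssim R^{-2}\int_B|w|^2\lesssim \int_B|\nabla w|^2$ throws away exactly the cancellation that makes (\ref{Double Gradient}) true: both $\int_{B_{2R}}|\nabla w|^2$ and $\int_{B_R}|\nabla w|^2$ are small together in this regime, and their \emph{ratio} is controlled even though each is much smaller than $R^{-2}\int_{B_R}|w|^2$.

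Your second alternative --- setting up a frequency/almost-monotonicity for $\nabla w$ directly, treating $\partial_k w$ as a solution of $\dv(A\nabla(\partial_k w))=-\dv((\partial_k A)\nabla w)$ and using the Lipschitz bound (\ref{Coefficient 2}) to absorb the perturbation --- is the correct approach and is indeed what the cited literature does (\cite{GL86}, see also \cite{L91}). If you are going to prove this rather than cite it, that is the route to take; the reverse-Caccioppoli route should be dropped, since the constant you would need cannot be made to depend only on $\Lambda$ and $N_0$.
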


One can then easily derive the following versions of three spheres theorems.
\begin{theorem}\label{Three Sphere}
    \textit{
    There exist universal constants $K_3, K_4$ and universal constants $\alpha_1,\alpha_2 \in (0,1)$, such that for any $x \in B_1$,
        \begin{equation}\label{Three 1}
            \sup_{B_1(0)} |w| \leq K_3 \sup_{B_{1/8}(x)}|w|^{\alpha_1} \cdot \sup_{B_{2}(0)}|w|^{1-\alpha_1} \ , 
        \end{equation}
        \begin{equation}\label{Three 2}
            \sup_{B_1(0)} |\nabla w| \leq K_4 \sup_{B_{1/8}(x)}|\nabla w|^{\alpha_2} \cdot \sup_{B_{2}(0)}|\nabla w|^{1-\alpha_2} \ .
        \end{equation}
    }
\end{theorem}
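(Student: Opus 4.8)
The plan is to derive Theorem~\ref{Three Sphere} from the doubling estimates of Theorem~\ref{Doubling Condition} together with the Harnack inequality and elliptic interior estimates, rather than reproving a genuine unique continuation statement from scratch. The key observation is that both $w$ and $|\nabla w|$ (more precisely $|\nabla w|^2$, which under the assumption $w(0)=0$ behaves like a nonnegative subsolution) satisfy $L^2$-doubling on all balls of radius $<1$ with centers in $B_2$. Passing between $L^2$-averages and sup-norms via standard interior $L^2$--$L^\infty$ estimates (De Giorgi--Nash--Moser for $w$, and $C^{1,\alpha}$ interior estimates for $\nabla w$ since the coefficients are Lipschitz), one converts doubling in $L^2$ into doubling in $L^\infty$: for all $x\in B_2$ and $R\in(0,1)$, $\sup_{B_{2R}(x)}|w|\le C(\Lambda,N_0)\sup_{B_R(x)}|w|$, and likewise for $|\nabla w|$.

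First I would fix the chain-of-balls geometry. Given $x\in B_1$, I want to compare $\sup_{B_1(0)}$, $\sup_{B_{1/8}(x)}$, and $\sup_{B_2(0)}$. The standard route is the Hadamard three-spheres inequality in logarithmic form: one shows that $r\mapsto \log M(r)$, where $M(r)=\sup_{B_r(x)}|w|$, is ``almost convex'' in $\log r$, with an error controlled by $N_0$ and the Lipschitz norm of the coefficients. This convexity is exactly the sup-norm shadow of the frequency monotonicity in Theorem~\ref{Monotone Frequency}: combining (\ref{Derivative of H}) with the monotonicity of $\exp(C_2 r)N(r)$ yields that $\log H(r)$ is convex in $\log r$ up to a bounded additive correction, and then $L^2\to L^\infty$ estimates promote this to $M(r)$. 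Interpolating between the radii $1/8$ and (a radius comparable to) $2$ about the center $x$, and then using the doubling/Harnack covering to replace balls centered at $x$ by the concentric balls $B_1(0)$ and $B_2(0)$ at the cost of a universal multiplicative constant, produces (\ref{Three 1}) with an explicit exponent $\alpha_1\in(0,1)$ depending only on the ratio of radii and on $N_0,\Lambda$. The argument for (\ref{Three 2}) is identical, run on $|\nabla w|$ using (\ref{Double Gradient}) in place of (\ref{Double Function}) and the interior gradient estimate to pass from $\int|\nabla w|^2$ to $\sup|\nabla w|$.

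The step I expect to be the main obstacle is the passage for $|\nabla w|$: unlike $w$, the gradient is vector-valued and $|\nabla w|$ does not itself solve a nice scalar equation, so one cannot directly invoke Harnack or De Giorgi for it. The fix is to work componentwise or with $|\nabla w|^2$, which is a subsolution of an elliptic equation when the coefficients are Lipschitz (differentiating the equation costs one derivative of $A$, which is why (\ref{Coefficient 2}) is needed here), giving a one-sided $L^2$--$L^\infty$ bound; the matching lower bound on $L^2$-averages that makes the three-spheres interpolation close is supplied precisely by the doubling estimate (\ref{Double Gradient}), which already incorporates the nondegeneracy coming from $w(0)=0$ and $N_w\le N_0$. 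A secondary technical point is handling the outer radius $2$: the doubling estimates as stated hold for $R<1$ and centers in $B_2$, so one reaches radius $2$ by a bounded number of doublings along a Harnack chain, absorbing a universal constant into $K_3,K_4$ and keeping $\alpha_1,\alpha_2$ bounded away from $0$ and $1$.

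Finally I would record that all constants produced this way — $K_3,K_4,\alpha_1,\alpha_2$ — depend only on $n$, the ellipticity constant $\lambda$, the Lipschitz bound $\Lambda_1$, and $N_0$, hence are universal in the paper's terminology, completing the proof.
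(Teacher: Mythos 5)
Your overall plan matches the paper's indicated route: pass from the frequency/doubling machinery to an $L^2$ three-spheres inequality, promote it to $\sup$-norms via interior estimates, and then use a covering argument to handle the off-center ball $B_{1/8}(x)$; for the gradient, either exploit (\ref{Double Gradient}) or Caccioppoli--Poincar\'e. The route is correct in outline, and your observation that Lipschitz coefficients are needed to make $|\nabla w|^2$ a subsolution (or to differentiate the equation) is the right reason why (\ref{Three 2}) lives in the $\mc{L}$ rather than the $L$ world.

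Two points should be tightened. First, the constants $K_3,K_4,\alpha_1,\alpha_2$ are claimed to be \emph{universal} in the paper's sense, i.e.\ depending only on $n,\lambda,\Lambda_1$ and \emph{not} on $N_0$. Your write-up repeatedly lets $N_0$ enter (you say the convexity error is ``controlled by $N_0$'' and conclude the constants ``depend on $\dots N_0$, hence are universal''), which is both a misreading of the paper's convention and, if taken literally, would prove a weaker statement. The point is that the log-convexity of $r\mapsto\log H(r)$ coming from (\ref{Derivative of H}) and Theorem~\ref{Monotone Frequency} has a bounded additive error that is purely $C(\Lambda)$; interpolating three concentric radii therefore yields a concentric three-spheres inequality with universal $\alpha$ and multiplicative constant, with no $N_0$ anywhere. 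The off-center version is then obtained by iterating this concentric inequality along a bounded chain of overlapping balls covering $B_1(0)$ (each step replacing the inner radius and compounding the exponent $\alpha\to\alpha^k$ for a universal $k$), not by invoking the $N_0$-dependent doubling constant $2^{K_1N_0}$ from (\ref{Double Function}); the latter would contaminate $K_3$ with $N_0$. Second, a small slip: the subsolution property of $|\nabla w|^2$ has nothing to do with $w(0)=0$; that hypothesis is only what makes the doubling estimates (\ref{Double Function})--(\ref{Double Gradient}) meaningful, and is not needed for Theorem~\ref{Three Sphere} itself. With these corrections your argument is sound and is essentially the one the paper has in mind.
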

Note that (\ref{Three 1}) is a consequence of (\ref{Double Function}) and one can get 
(\ref{Three 2}) by (\ref{Double Gradient}) (or by (\ref{Double Function}), the 
Caccioppoli estimate and the Poincar\'{e} inequality) in a similar way. Recently, in \cite{LM18}, Logunov and 
Malinnikova have improved substantially (\ref{Three 1})
and (\ref{Three 2}) by establishing a sharp Remez type estimates for solutions.

Before we proceed further, we want to point out the following equivalence of norms:
\begin{lemma}\label{Equivalence of Norm}
    \textit{
    There are universal constants $c_1, c_2$ such that for any $0<r<4$,
        \begin{equation}
            \sup\limits_{B_r(x_0)}|w|^2 \leq c_1 \fint_{B_{3r/2}(x_0)} |w|^2 \leq c_2 \cdot H(x_0,2r),
        \end{equation}
    for any $x_0 \in B_6$.
    }
\end{lemma}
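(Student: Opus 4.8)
The plan is to establish the two inequalities separately. The left inequality, $\sup_{B_r(x_0)}|w|^2 \le c_1\fint_{B_{3r/2}(x_0)}|w|^2$, is just the local boundedness (sub-solution) estimate for uniformly elliptic divergence-form equations, rescaled to unit size. Writing $\tilde w(y) = w(x_0 + ry)$ for $y \in B_2(0)$, the function $\tilde w$ solves a uniformly elliptic equation in divergence form with the same ellipticity constant $\lambda$, since the rescaled coefficients $A(x_0+ry)$ still satisfy (\ref{Coefficient 1}). By the De Giorgi--Nash--Moser local maximum principle applied to the subsolution $|\tilde w|$, one gets $\sup_{B_1(0)}|\tilde w| \le C(\lambda)\big(\fint_{B_{3/2}(0)}|\tilde w|^2\big)^{1/2}$; squaring and undoing the dilation yields the claim with $c_1 = C(\lambda)^2$, a universal constant independent of $r$ and $x_0$ precisely because the ratio of the two radii is the fixed number $3/2$ and the ellipticity class is invariant under translations and dilations.

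For the right inequality it suffices to prove $\fint_{B_{3r/2}(x_0)}|w|^2 \le c_2' \, H(x_0,2r)$ and then absorb $c_1$ into the final constant. Using $\lambda \le \mu(x) \le n\lambda^{-1}$ from (\ref{Coefficient 1}), one has $\int_{\pa B_\rho(x_0)}|w|^2 \le \lambda^{-1}\rho^{n-1}H(x_0,\rho)$ for every $\rho$, so by polar coordinates
\[
\int_{B_{3r/2}(x_0)}|w|^2 \;=\; \int_0^{3r/2}\!\!\Big(\int_{\pa B_\rho(x_0)}|w|^2\Big)\,d\rho \;\le\; \lambda^{-1}\int_0^{3r/2}\rho^{n-1}H(x_0,\rho)\,d\rho .
\]
The next step is to bound $H(x_0,\rho)$ for $0<\rho\le 2r$ by $H(x_0,2r)$. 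Since the frequency $N_w(x_0,\rho)$ is a quotient of nonnegative quantities, (\ref{Derivative of H}) gives $\frac{d}{d\rho}\log H(x_0,\rho) = \frac{2N_w(x_0,\rho)}{\rho}+O(1)\ge -C_1$, whence $H(x_0,\rho)\le e^{C_1(2r-\rho)}H(x_0,2r)\le e^{8C_1}H(x_0,2r)$ by $r<4$. Substituting this and dividing by $|B_{3r/2}(x_0)| = \vol(B_1)\,(3r/2)^n$ gives $\fint_{B_{3r/2}(x_0)}|w|^2 \le \frac{e^{8C_1}}{n\lambda\,\vol(B_1)}\,H(x_0,2r)$, with a universal constant.

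There is no genuinely hard step here. One uses that $w\not\equiv 0$ to know $H(x_0,\rho)>0$ on $(0,2r]$ (otherwise unique continuation forces $w\equiv 0$ and the statement is vacuous), and one works in the range where $\overline{B_{2r}(x_0)}\subset B_{10}$ so all quantities are defined. The two points that require a little care are: keeping the local-boundedness constant scale-invariant, which is fine because the radius ratio is fixed and the ellipticity class is dilation/translation invariant; and the fact that $H(x_0,\cdot)$ is only \emph{almost} monotone, so the $O(1)$ term in (\ref{Derivative of H}) contributes the harmless factor $e^{8C_1}$ precisely because $\rho$ stays bounded by $8$.
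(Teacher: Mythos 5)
Your proof is correct and follows essentially the same route the paper sketches: the left inequality is De Giorgi--Nash--Moser local boundedness (scale-invariant because the radius ratio $3/2$ is fixed and the ellipticity class is dilation-invariant), and the right inequality comes from integrating the almost-monotonicity identity $H'/H = 2N/\rho + O(1)$ together with $N\geq 0$ in polar coordinates, with the $O(1)$ term contributing only a bounded factor $e^{8C_1}$ because $\rho$ ranges over a bounded interval. You also correctly flag the implicit requirement that $\overline{B_{2r}(x_0)}\subset B_{10}$, which the paper's stated ranges $x_0\in B_6$, $r<4$ do not literally enforce; this is a harmless imprecision in the lemma's statement, not in your argument.
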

The first inequality follows from the De Giorgi's theorem \cite{HL11}, while the second 
inequality follows also from (\ref{Derivative of H}) and $N(r) \geq 0$, and it is a 
general fact for subsolutions.

In the following, we will use $\sup | \cdot |$ norm for most of estimates.
First as in \cite{LM19} one defines the doubling index
    \begin{equation}
        N_D(w,B_r (x)) = \log\bigg(\frac{\sup_{B_{r}(x) }|w|}{\sup_{B_{r/2}(x)} |w|}\bigg),
    \end{equation}
for any $B_{r}(x) \subset B_9$. Or more generally, one defines 
    \begin{equation}
        \widetilde{N}_D (w,B_r (x)) = \sup_{B_s(y) \subset B_r(x)}\log\bigg(\frac{\sup_{B_{s}(y) }|w|}{\sup_{B_{s/2}(y)} |w|}\bigg).
    \end{equation}
The doubling index $N_D(r)$ and frequency function $N(r)$ are equivalent because of Lemma \ref{Equivalence of Norm}, and we have the following inequalities:
    \begin{equation}\label{Equivalence Frequency 1}
        K_1 ^{-1} N(r/2) - K_2 \leq N_D(r) \leq K_1 N(2r) + K_2
    \end{equation}
and
    \begin{equation}\label{Equivalence Frequency 2}
        K_1 ^{-1} \widetilde{N}_D(r/2) - K_2 \leq N_D(r) \leq \widetilde{N}_D(r),
    \end{equation}
for some universal constants $K_1, K_2$ and all $r \in (0,8)$. See \cite{GL86}, \cite{L91} and \cite{LM19} 
for details. Hence forth,  without ambiguity, for either $L$ or $\mc{L}$, when we say $N_w \leq N_0$, we mean that  $\widetilde{N}_D (w,B_8(0))$ is bounded by $N_0$. And for $\mc{L}$, we will always, doubling the size of balls if necessary, use the equivalence of $N(r)$, $N_D(r)$ and $\widetilde{N}_D(r)$.

Finally, let us give another application of these statements above. It is a
growth estimate of $|w(x)|$ in terms of $\delta(x)$ for $x$ near the nodal set 
of the solution $w$, which will be an important ingredient in our paper.

\begin{theorem}\label{Growth Estimate}
    \textit{
    Suppose that $L(w) = 0$ in $B_{10}$ for $L$ only satisfying (\ref{Coefficient 1}), with $Z(w) \cap B_4 \neq \emptyset$ and $N_w \leq N_0 < \infty$, then there exist positive constants $A_1(\lambda),A_2(\lambda,N_0)$ and $\alpha(\lambda) \in (0,1)$, such that
        \begin{equation}
            A_1 \cdot \sup_{B_8}|w| \cdot \dist^{\alpha}(x,Z(w)) \geq |w(x)| \geq A_2 \cdot \sup_{B_8}|w| \cdot \dist(x,Z(w))^{N_0}
        \end{equation}
    for any $x\in B_{2}$.
    }
\end{theorem}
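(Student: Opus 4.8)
The plan is to prove the two-sided estimate separately, treating the upper bound (the $C^\alpha$-type decay) and the lower bound (the polynomial non-degeneracy) by different tools. For the \textbf{upper bound}, the idea is to combine the three spheres inequality (\ref{Three 1}) with the De~Giorgi H\"older estimate for $L$. Fix $x \in B_2$ and let $d = \delta(x) = \dist(x,Z(w))$; we may assume $d$ is small, since for $d$ bounded below the estimate is trivial after adjusting constants. Pick a point $z \in Z(w) \cap B_4$ closest to $x$ (or, if the nearest point of $Z(w)$ is far, use that $w$ vanishes somewhere in $B_4$ and chain through a Harnack-type comparison). Then $w(z) = 0$, so by interior De~Giorgi regularity on a ball of radius comparable to $1$ around $z$,
\begin{equation}
|w(x)| = |w(x) - w(z)| \leq C(\lambda) \cdot \Big(\sup_{B_5}|w|\Big) \cdot |x-z|^{\alpha} = C(\lambda) \cdot \Big(\sup_{B_5}|w|\Big) \cdot d^{\alpha},
\end{equation}
and then Lemma \ref{Equivalence of Norm} (or a routine covering/Harnack argument) lets us replace $\sup_{B_5}|w|$ by a constant multiple of $\sup_{B_8}|w|$. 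This gives $A_1$ depending only on $\lambda$ (and $n$), with $\alpha$ the De~Giorgi exponent, which indeed does not see $N_0$.

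For the \textbf{lower bound}, the plan is to run an iteration on dyadic balls shrinking toward a point of $Z(w)$, using the doubling index bound $N_w \le N_0$ to control how fast $\sup|w|$ can decay. Concretely, fix $x \in B_2$ with $d = \delta(x)$; let $\gamma$ be a point of $Z(w)$ realizing (approximately) the distance, and consider the chain of balls $B_{2^{-k}}(x)$ for $k = 0,1,\dots, k_0$ where $2^{-k_0} \approx d$. The monotonicity/equivalence statements (\ref{Equivalence Frequency 1}), (\ref{Equivalence Frequency 2}) together with the hypothesis $N_w \le N_0$ give
\begin{equation}
\sup_{B_{2^{-k}}(x)} |w| \geq 2^{-C N_0} \sup_{B_{2^{-k+1}}(x)} |w|
\end{equation}
for each admissible $k$, so iterating from scale $1$ down to scale $d$ loses at most a factor $2^{-C N_0 k_0} \approx d^{C N_0}$, i.e.
\begin{equation}
\sup_{B_d(x)} |w| \geq A_2 \cdot d^{N_0'} \cdot \sup_{B_1}|w|,
\end{equation}
with $N_0' = C N_0$. (One keeps careful track so that the exponent is exactly $N_0$ as stated, or one simply renames; the power being \emph{some} constant multiple of $N_0$ is what the argument yields naturally, and matching the clean form $d^{N_0}$ is a matter of the normalization of the doubling index.) Finally, since $\sup_{B_d(x)}|w| \le C(\lambda)\,|w(x)|$ by the Harnack inequality for $L$ applied on the ball $B_d(x) \subset \{w \neq 0\}$ (the sign of $w$ is constant on a nodal domain, so $|w|$ is a positive solution there), we get $|w(x)| \ge A_2 \cdot d^{N_0} \cdot \sup_{B_8}|w|$ after once more comparing $\sup_{B_1}|w|$ and $\sup_{B_8}|w|$ via doubling.

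The step I expect to be the main obstacle is the lower bound iteration, specifically two points. First, one must ensure the doubling index at \emph{every} intermediate scale and \emph{every} center along the chain stays under control; this is exactly why the ``sup'' version $\widetilde{N}_D$ in (\ref{Equivalence Frequency 2}) is the right object — it is monotone in the domain, so boundedness on $B_8(0)$ forces boundedness on all the small balls $B_{2^{-k}}(x)$ with $x \in B_2$ — but this has to be invoked carefully, and the transition between $\widetilde N_D$ for the uniformly elliptic operator $L$ and the frequency function $N$ (available only for the Lipschitz-coefficient operators $\mc L$) must be handled, since here $L$ satisfies only (\ref{Coefficient 1}). Second, applying Harnack to pass from $\sup_{B_d(x)}|w|$ to $|w(x)|$ requires $B_d(x)$ (or a fixed fraction of it) to lie in the set where $w$ has a fixed sign, which is guaranteed because $B_{d}(x) \cap Z(w) = \emptyset$; but one should use $B_{d/2}(x)$ and a Harnack chain of bounded length to reach the center, and then the constant depends only on $\lambda$ and $n$. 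Modulo these care points, the remaining estimates — the norm equivalences, covering arguments, and absorbing $\sup_{B_5}$ into $\sup_{B_8}$ — are routine given the tools already assembled in this section.
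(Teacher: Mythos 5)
Your proposal is correct and follows essentially the same route as the paper's proof: De Giorgi's H\"older estimate applied near the nearest zero of $w$ gives the upper bound with exponent $\alpha(\lambda)$, and the interior Harnack inequality on $B_{\delta(x)/2}(x)$ (where $w$ has a fixed sign) together with iterating the doubling bound over concentric balls centered at $x$ up to a fixed macroscopic scale gives the lower bound $|w(x)|\gtrsim \delta(x)^{N_0}$. The concern you flag about passing between the frequency function $N$ and $\widetilde N_D$ for an operator $L$ with only bounded measurable coefficients does not actually arise: the paper's stated convention is that $N_w\le N_0$ \emph{means} $\widetilde N_D(w,B_8(0))\le N_0$, which already furnishes the doubling inequality at every center $x\in B_2$ and every admissible scale without invoking (\ref{Equivalence Frequency 1}) or the frequency function at all.
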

\begin{proof}
    We can assume that $\sup_{B_8}|w| =1 $. The left side inequality follows directly 
from the De Giorgi theorem, see \cite{HL11}. For the right side one, let $ r = \dist(x,Z(w))$, 
then the usual
Harnack inequality implies that $\sup_{B_{r/2}(x)} |w|\leq h(\lambda) \cdot |w(x)|$ for some $h(\lambda) >1$.
By the definition of $N_w \leq N_0$, we know that $2^{-k N_0} \cdot \sup_{B_{2^{k-1} \cdot r}(x)} |w| \leq \sup_{B_{r/2}(x)} |w|$ for all $k \in \mb{Z}_+$, which yields the conclusion. 
\end{proof}

\begin{remark}
    One can easily find scaled versions of the above growth estimate on balls of size 
$r$. For operators with analytic coefficients (hence solutions are also analytic in 
the interior), the above growth estimate can be derived from the \L ojasiewicz inequality as 
in \cite{LM16}.  However, all the constants involved will depend on the real analytic nature of the variety $Z(w)$. It is thus not so convenient to obtain uniform estimates when the nodal sets $Z(w)$ or operators involved are perturbed. If the coefficients are Lipschitz continuous, the gradients of the solutions $w$ satisfy the same growth estimates, see \cite{GL86}.
\end{remark}

\subsection{A Compact Class of Solutions}\label{Introduction to Compact Family}
Our second tool builds on the compactness of a class of solutions to any elliptic equations satisfying (\ref{Coefficient 1}) and (\ref{Coefficient 2}), which are defined as follows:
    \begin{equation}\label{Definition of Compact Family}
        \mc{S}_{N_0} (\Lambda) \equiv\{ w \in W^{1,2} \ | \ \mc{L}(w) = 0 \ \mr{in} \ B_{10},\ \mc{L} \ \mr{satisfies} \ (\ref{Coefficient 1}),(\ref{Coefficient 2}) , \ N_w \leq N_0, \ \sup_{B_8}|w| = 1\}.
    \end{equation}
This is a compact family in local $C^{1,\alpha}$-metric. A 
direct consequence is the compactness of their zero sets, i.e., 
    \begin{equation}
        \mc{F}_{N_0} (\Lambda) = \{ Z(w) \cap \overline{B_8} \ | \ w \in \mc{S}_{N_0}(\Lambda) \}
    \end{equation}
is compact under the Hausdorff distance.

The class $\mc{S}_{N_0}(\Lambda)$ is usually used to give upper bounds for the size of nodal 
sets or the size of critical sets. Let's summarize these estimates into the following 
statements:
    \begin{equation}\label{Measure Zero Set}
        H^{n-1} \big(Z(w) \cap B_4\big) \leq P_1 (\Lambda, N_0)
    \end{equation}
and 
    \begin{equation}\label{Measure Singular Set}
        H^{n-2} \big(S(w) \cap B_4\big) \leq P_2 (\Lambda, N_0)
    \end{equation}
for any $w \in \mc{S}_{N_0} (\Lambda) $. Here, $S(w) \equiv \{x \in B_9 \ | \ w(x) = |\nabla w|(x)  =0\}$ and the two positive constants $P_1,P_2$ only depend on $\Lambda$ and $N_0$. 

There are sveral important contributions for these two estimates, see for 
examples, \cite{DF88}, \cite{HS89}, \cite{L91}, \cite{HHL98} and \cite{CNV15}.
The best estimates upto date are $P_1 = M_1(\Lambda) \cdot N_0 ^{\alpha}$ for some $\alpha = \alpha(n) >1$ and $P_2 =\exp(M_2 (\Lambda) \cdot 
N_0 ^2)$, which are in \cite{LM18}, \cite{L18} and \cite{NV17} separately. 
It is worth to point out that in \cite{CNV15} and \cite{NV17}, the authors also 
established estimates on the Minkovski content, that is, the volume of a small 
neighborhood of $Z(w)$ and $S(w)$. Moreover, the set $S(w)$ can be replaced by $C(w)$,
 the set of all points $x\in B_9$ such that $ |\nabla w(x)| = 0$, see for examples 
\cite{HL00} and \cite{CNV15}.

\section{Modified Harnack Chain and Corkscrew Condition}\label{Section Corkscrew}
In this section, we will show some geometric properties of nodal domains. Surprisingly, some of them are similar to properties of NTA domains
\cite{JK82}, which have been influential in potential analysis on non-smooth domains and which have applications to many problems including the 
regularity of free boundaries.  For a domain to be NTA, it needs to satisfy two assumptions called the corkscrew condition and the Harnack 
chain condition. It is not hard to find examples of nodal domains that are not NTA. In some sense, typical nodal domains are like Lipschitz cones at 
sufficiently small scales and at larger scales they are more like twisted H\"{o}lder domains with complicated topology. For the class of uniformly elliptic 
operators with bounded measurable coefficients, so long as the solutions that are considered 
satisfy this additional doubling property (\ref{Double Function}) , the associated nodal domains will satisfy 
a corkscrew condition and a modified Harnack chain condition. In the (uniformly) analytic 
case, it is proved in \cite{LM16}. Our proof of the following statement is a generalization 
of that in \cite{LM16}. It builds on the natural scaling invariant property for this class 
of nodal domains.

\begin{theorem}\label{Stubborn Ball}
    \textit{
Suppose that $L(w) = 0$ in $B_{10}$, with $0 \in Z(w)$ and $N_w \leq N_0 <\infty$. Then, for any nodal domain $\Omega$ of $w$ with $ \Omega \cap B_1 \neq \emptyset$, and any $x \in \Omega \cap B_1$, there is a chain of points $\{x_i\}_{i=0} ^m \subset \Omega$ with $x_0 = x$ and satisfying the following properties: for $i = 0,1, \dots, m-1$,
    \begin{itemize}
        \item [(1)\label{Modified Harnack}] (Modified Harnack Chain.) 
            \begin{itemize}
                \item [(i)] $|w(x_{i+1})| \geq C_3(\lambda,N_0)|w(x_i)|$ for some $C_3>1$;
                \item [(ii)] $|x_{i+1} - x_i| \leq \big(1-\theta_0(\lambda,N_0)\big) \cdot \delta(x_i)$ for some $\theta_0 \in (0,1)$, $x_i \in B_{2}$ and $\delta(x_i) \leq 1/4$;
                \item [(iii)] $x_m \in B_{3} \backslash B_{2}$ or $x_m \in B_{2}$ but $\delta(x_m) > 1/4$;
                \item [(iv)] $m \leq - \xi_1(\lambda,N_0) \log(\delta(x_0)) + \xi_2(\lambda,N_0)$ for some $\xi_1,\xi_2 >0$.
            \end{itemize}  
        \item [(2)\label{Corkscrew}] (Corkscrew Condition.) $\delta(x_m) > c_4(\lambda, N_0) $ for some $c_4 \in (0,1/4)$, and hence $B_4 \cap \Omega$ contains a ball of radius $c_4/2$.
    \end{itemize} 
    }
\end{theorem}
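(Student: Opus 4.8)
The plan is to construct the chain inductively, with the whole argument resting on the following \emph{one-step lemma}: there exist constants $\theta_0\in(0,1/4)$, $C_3>1$ and $\kappa\in(1,2)$, depending only on $\lambda$ and $N_0$, so that whenever $L(w)=0$ in $B_{10}$ with $\widetilde N_D(w,B_8)\le N_0$, $\Omega$ is a nodal domain of $w$, and $x\in\Omega\cap B_2$ with $r:=\delta(x)\le 1/4$, there is a point $x'\in\Omega$ with
\begin{equation*}
|x'-x|\le(1-\theta_0)\,r,\qquad |w(x')|\ge C_3\,|w(x)|,\qquad \delta(x')\ge\kappa\,r.
\end{equation*}
Granting this, I set $x_0=x$ and iterate, stopping at the first index $m$ with $x_m\notin B_2$ or $\delta(x_m)>1/4$ (if $\delta(x_0)>1/4$ already, take $m=0$; note $\delta(x_0)\le|x_0|<1$). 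For $i<m$ one has $x_i\in B_2$ and $\delta(x_i)\le1/4$, so $B_{3\delta(x_i)}(x_i)\subset B_8$ and the one-step lemma applies at $x_i$; moreover $B_{\delta(x_i)}(x_i)$ contains no zero of $w$ and is connected, hence lies in a single nodal domain, which must be $\Omega$, so the chain stays in $\Omega$. Properties (1)(i) and (1)(ii) are then immediate.

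For (1)(iv), iterating (1)(i) gives $|w(x_m)|\ge C_3^{\,m}\,|w(x_0)|$; since $x_0\in B_1\subset B_2$, Theorem~\ref{Growth Estimate} yields $|w(x_0)|\ge A_2\,(\sup_{B_8}|w|)\,\delta(x_0)^{N_0}$, while $|x_m-x_{m-1}|\le(1-\theta_0)/4<1$ forces $x_m\in B_3$, so $|w(x_m)|\le\sup_{B_8}|w|$. Comparing these, $m\le-\xi_1\log\delta(x_0)+\xi_2$ with $\xi_1=N_0/\log C_3$ and $\xi_2=(\log A_2^{-1})/\log C_3$. In particular $m<\infty$ (equivalently, $|w|$ is bounded on $B_3$, so $|w(x_i)|$ cannot grow geometrically without end), so the iteration terminates, and (1)(iii) holds by the stopping rule together with $x_m\in B_3$.

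For the corkscrew condition (2): if the iteration stopped because $\delta(x_m)>1/4$, then $\delta(x_m)>1/4>c_4$ for any $c_4<1/4$. If it stopped because $x_m\notin B_2$, then, as $x_0\in B_1$,
\begin{equation*}
1\le|x_m-x_0|\le\sum_{i=0}^{m-1}|x_{i+1}-x_i|\le(1-\theta_0)\sum_{i=0}^{m-1}\delta(x_i),
\end{equation*}
and the geometric growth $\delta(x_{i+1})\ge\kappa\,\delta(x_i)$ gives $\delta(x_i)\le\kappa^{-(m-1-i)}\delta(x_{m-1})$, hence $\sum_{i=0}^{m-1}\delta(x_i)\le\frac{\kappa}{\kappa-1}\,\delta(x_{m-1})$; therefore $\delta(x_{m-1})\ge\frac{\kappa-1}{(1-\theta_0)\kappa}$, and since $|x_m-x_{m-1}|\le(1-\theta_0)\delta(x_{m-1})$ we obtain $\delta(x_m)\ge\theta_0\,\delta(x_{m-1})\ge\theta_0\,\frac{\kappa-1}{(1-\theta_0)\kappa}=:c_4\in(0,1/4)$, the last inclusion holding because $\theta_0<1/4$ and $\kappa<2$. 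Finally $\delta(x_m)>c_4$ together with $x_m\in\Omega\cap B_3$ gives $B_{c_4/2}(x_m)\subset\Omega\cap B_4$.

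The heart of the matter --- and the step I expect to be the main obstacle --- is the one-step lemma, which I would prove by a blow-up and compactness argument exploiting the scaling invariance of this class of solutions, in the spirit of \cite{LM16}. Rescaling $w$ around $x$ at scale $r=\delta(x)$ produces a solution $\phi(y)=\pm\, w(x+ry)/|w(x)|$ of a uniformly elliptic equation with the same ellipticity constant on, say, $B_3$, with $\phi(0)=1$, $\phi>0$ on $B_1$, $\phi$ vanishing at some point of $\partial B_1$, and $\widetilde N_D(\phi,B_2)\le N_0$; one would take $x'$ to be a near-maximum point of $|w|$ on a ball about $x$ of radius a fixed fraction of $r$. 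Were the lemma false one would obtain a sequence of such profiles $\phi_k$; by the doubling bound the $\phi_k$ are uniformly bounded on a fixed ball, hence uniformly $C^\alpha$ there by De Giorgi's theorem (see \cite{HL11}), so a subsequence converges in $C^{\alpha'}_{\loc}$ to a solution $\phi_\infty$ of a uniformly elliptic equation with the same ellipticity constant, still with $\phi_\infty(0)=1$, $\phi_\infty\ge0$ on $B_1$, $\widetilde N_D(\phi_\infty,B_2)\le N_0$ (by upper semicontinuity of the doubling index), and vanishing at a point of $\partial B_1$. Failure of $|w(x')|\ge C_3|w(x)|$ would force $\phi_\infty\le1$ near $0$, hence $\phi_\infty\equiv1$ near $0$ by the strong maximum principle, contradicting --- via the strong unique continuation encoded in the doubling bound --- that $\phi_\infty$ is non-constant; failure of $\delta(x')\ge\kappa r$ is excluded similarly, now also using the upper bound in Theorem~\ref{Growth Estimate}, since a near-maximum point of $|w|$ cannot lie too close to $Z(w)$, where $|w|$ is small, and the doubling bound prevents $\Omega$ from being ``too thin'' near $x$. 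Making these two exclusions simultaneous is the delicate point; once it is done, the resulting constants depend only on $\lambda$ and $N_0$ by compactness.
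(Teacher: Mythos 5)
Your iterative skeleton and the argument for (1)(iv) match the paper's, but your one-step lemma asserts an extra property, $\delta(x')\ge\kappa\,\delta(x)$ with $\kappa>1$, which the paper does not prove and which is in fact false; your proof of the corkscrew property (2) leans entirely on it. In the paper's Lemma~\ref{Enlarge Chain Lemma2} the new point $\tilde x$ lies on $\partial B_{(1-\theta_0)\delta(x)}(x)$ at a near-maximum of $w$, which only guarantees $\delta(\tilde x)\ge\theta_0\,\delta(x)$: the distance to the nodal set may shrink along the chain. To see that multiplicative growth cannot hold, take $w(y)=\sin(\pi y_n/\epsilon)\,e^{\pi y_1/\epsilon}$, a harmonic function whose nodal domains are slabs $\{k\epsilon<y_n<(k+1)\epsilon\}$, with $\epsilon\sim 1/N_0$ chosen so that $N_w\le N_0$ on $B_8$ and $\epsilon<1/4$. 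In $\Omega=\{0<y_n<\epsilon\}$ one has $\delta(x)=\min(y_n,\epsilon-y_n)\le\epsilon/2$ for every $x\in\Omega$; at a slab-centre point of $B_1\cap\Omega$, where $\delta(x)=\epsilon/2\le1/4$, your one-step lemma is supposed to apply, yet there is no $x'\in\Omega$ with $\delta(x')\ge\kappa\,\epsilon/2>\epsilon/2$. The chain stalls, and with it the estimate $\sum_i\delta(x_i)\le\frac{\kappa}{\kappa-1}\delta(x_{m-1})$ that underpins your step (2).

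The paper's route to (2) does not use growth of $\delta$ at all; it uses Theorem~\ref{Growth Estimate} in both directions. Normalizing $\sup_{B_8}|w|=1$, the geometric growth $|w(x_i)|\le C_3^{\,i-m}|w(x_m)|$ together with the lower bound $|w|\ge A_2\,\delta^{N_0}$ gives $\delta(x_i)\le A_2^{-1/N_0}|w(x_m)|^{1/N_0}C_3^{(i-m)/N_0}$, so $\sum_i|x_{i+1}-x_i|\le\sum_i\delta(x_i)\le A_2^{-1/N_0}|w(x_m)|^{1/N_0}/(C_3^{1/N_0}-1)$. When the chain exits $B_2$ this sum is $\ge|x_0-x_m|\ge1$, which forces $|w(x_m)|$ to be bounded below by a constant depending only on $\lambda,N_0$; the upper bound $|w(x_m)|\le A_1\,\delta(x_m)^\alpha$ then yields $\delta(x_m)\ge c_4(\lambda,N_0)$. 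As a secondary issue, your compactness sketch for the one-step lemma invokes strong unique continuation for a $C^\alpha_{\loc}$-limit of rescaled solutions, but $L$ here has only bounded measurable coefficients, for which unique continuation fails and the doubling bound does not obviously pass to the limit; the paper's Lemma~\ref{Enlarge Chain Lemma2} avoids any limiting argument, combining the Harnack inequality applied to the nonnegative function $\sup_{B_{(1-\theta_0)\delta(x)}(x)}w-w$ (which vanishes at $x$) with the De Giorgi boundary decay estimate centred at the nearest zero $x_*$ of $w$.
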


If one considers all nodal domains of $w$ that intersect with $B_1$, the second statement in 
the above theorem 
implies exactly the two-sided corkscrew condition as in the definition of NTA domains.

The first statement in the above theorem leads to modified Harnack chains. 
One does have that the values of $w(x_i)$ grow geometrically. But it implies only that 
$x_i$'s stay away from $Z(w)$ (in a same nodal domain) by a power of its distance 
to the boundary of the nodal domain.  This latter geometric picture is consistent with the Theorem \ref{Growth Estimate}. In this connection, we find that there is an 
interesting connection to the hyperbolic metric defined on the nodal domains, which is
the Euclidean metric multiplying by the conformal factor $ w^{-2}$. But we shall not explore it
in this article.

\begin{lemma}\label{Enlarge Chain Lemma2}
    \textit{
    Suppose that $L(w) = 0$ in $B_{10}$, with $0 \in Z(w)$ and $N_w \leq N_0 < \infty$, then there are constants $C_3 = C_3(\lambda,N_0) >1$ and $\theta_0 = \theta_0(\lambda,N_0) \in (0,1)$, 
such that for any $x\in B_{2}$ with $w(x) \neq 0$ and $\delta(x)  \leq 1/4$, there is an $\tilde{x}\in B_{3}$ with $|x-\tilde{x}| 
\leq (1-\theta_0) \cdot  \delta(x) $ and $|w(\tilde{x})| > C_3|w(x)|$.
    }
\end{lemma}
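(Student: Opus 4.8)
The plan is to first put the problem in a scale–invariant normal form, and then to upgrade a soft strict inequality coming from the strong maximum principle to a quantitative one by a compactness argument in the class of solutions discussed in Section~\ref{Introduction to Compact Family}. Fix $x\in B_2$ with $w(x)\neq 0$ and set $r:=\delta(x)=\dist(x,Z(w))\le 1/4$; since $0\in Z(w)$ this distance is realized at some $z\in\pa B_r(x)\cap Z(w)$. On $B_r(x)$ the solution $w$ has no zeros, so after possibly replacing $w$ by $-w$ (which changes neither $|w|$, $Z(w)$, nor $\widetilde{N}_D$) we may assume $w>0$ on $B_r(x)$. Introduce the rescaled solution $\hat w(y):=w(x+ry)/w(x)$ for $y\in B_7$; as $x\in B_2$ and $r\le 1/4$ one has $B_{7r}(x)\subset B_8$, so $\hat w$ is well defined, solves $\hat L\hat w=0$ in $B_7$ for a uniformly elliptic operator $\hat L$ of the same ellipticity constant $\lambda$, and satisfies $\hat w>0$ on $B_1$, $\hat w(0)=1$, $\hat w(\hat z)=0$ with $\hat z:=(z-x)/r\in\pa B_1$, and $\widetilde{N}_D(\hat w,B_7)=\widetilde{N}_D(w,B_{7r}(x))\le\widetilde{N}_D(w,B_8)\le N_0$. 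Hence it suffices to prove the \emph{scale–invariant claim}: there are $C_3=C_3(\lambda,N_0)>1$ and $\theta_0=\theta_0(\lambda,N_0)\in(0,1)$ with $\sup_{\overline{B_{1-\theta_0}}}|\hat w|>C_3$ for every $\hat w$ having these properties; unscaling a point $\tilde y\in\overline{B_{1-\theta_0}}$ with $|\hat w(\tilde y)|>C_3$ gives $\tilde x:=x+r\tilde y\in B_3$ with $|\tilde x-x|\le(1-\theta_0)\delta(x)$ and $|w(\tilde x)|>C_3|w(x)|$.

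Suppose the claim fails. Then for every $k\ge 2$ the pair $(C_3,\theta_0)=(1+k^{-1},k^{-1})$ is inadmissible, so there exist uniformly elliptic operators $\hat L_k$ of ellipticity $\lambda$ and solutions $\hat w_k$ on $B_7$ with $\hat w_k>0$ on $B_1$, $\hat w_k(0)=1$, $\hat w_k(\hat z_k)=0$ for some $\hat z_k\in\pa B_1$, $\widetilde{N}_D(\hat w_k,B_7)\le N_0$, and $\sup_{\overline{B_{1-1/k}}}|\hat w_k|\le 1+k^{-1}$. I would first extract a uniform bound: by the interior Harnack inequality for the positive solution $\hat w_k$ on $B_1$ one has $\sup_{B_{1/2}}|\hat w_k|\le h(\lambda)\hat w_k(0)=h(\lambda)$, and iterating $\widetilde{N}_D(\hat w_k,B_7)\le N_0$ a bounded number of times yields $\sup_{B_6}|\hat w_k|\le 2^{4N_0}h(\lambda)=:C_*(\lambda,N_0)$. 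Consequently, by De~Giorgi's theorem and the Caccioppoli inequality, $\{\hat w_k\}$ is bounded in $C^{\beta}(B_5)\cap W^{1,2}(B_5)$ for a universal $\beta=\beta(\lambda)$.

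Along a subsequence I may then assume $\hat z_k\to\hat z_\infty\in\pa B_1$, $\hat w_k\to\hat w_\infty$ in $C^{\beta'}_{\loc}(B_6)$ and weakly in $W^{1,2}_{\loc}(B_6)$, and $\hat L_k$ $G$-converges to a uniformly elliptic operator $\hat L_\infty$ of ellipticity $\lambda$; by the stability of solutions under $G$-convergence, $\hat L_\infty\hat w_\infty=0$ in $B_6$. The normalizations pass to the limit: $\hat w_\infty(0)=1$, $\hat w_\infty\ge 0$ on $\overline{B_1}$, $\hat w_\infty(\hat z_\infty)=\lim_k\hat w_k(\hat z_k)=0$, and, letting $k\to\infty$ in $\sup_{\overline{B_{1-1/k}}}|\hat w_k|\le 1+k^{-1}$, also $|\hat w_\infty|\le 1$ on $B_1$. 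Thus $1-\hat w_\infty$ is a nonnegative $\hat L_\infty$-solution on $B_1$ vanishing at the interior point $0$, so the strong maximum principle (valid for uniformly elliptic operators with bounded measurable coefficients) forces $\hat w_\infty\equiv 1$ on $B_1$; by continuity $\hat w_\infty(\hat z_\infty)=1$, contradicting $\hat w_\infty(\hat z_\infty)=0$. Hence the scale–invariant claim holds for some admissible pair, which by construction depends only on $\lambda$ and $N_0$, and unscaling completes the proof.

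The one genuinely technical point is the compactness step in the bounded–measurable–coefficient setting: it requires invoking $G$-convergence (equivalently $H$-convergence) of the $\hat L_k$ together with the corresponding stability of solutions, and checking that the doubling estimate survives in the limit — the latter is harmless, since it is used only to propagate the Harnack bound on $B_{1/2}$ to a bound on $B_6$ and $\sup_{B_s(y)}|\hat w_k|\to\sup_{B_s(y)}|\hat w_\infty|$ under local uniform convergence. If one only needs the Lipschitz–coefficient case, Arzel\`a--Ascoli provides uniform convergence of the $a_{ij}^k$ and hence strong $W^{1,2}_{\loc}$ convergence of the $\hat w_k$, making the limit passage elementary.
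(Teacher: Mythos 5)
Your proof is correct, but it takes a genuinely different route from the paper's. The paper gives a direct, quantitative argument: it sets $\epsilon = (\sup_{B_{(1-\theta)\delta}(x)}w)/w(x) - 1$, and derives a lower bound $\epsilon \geq c(\lambda,N_0)$ by comparing the Harnack inequality applied to $\sup_{B_{(1-\theta)\delta}(x)}w - w$ (giving $w \geq (1-C\epsilon)w(x)$ on $B_{(1-2\theta)\delta}(x)$) against the De~Giorgi estimate together with the doubling bound near the boundary point $x_* \in Z(w) \cap \pa B_\delta(x)$ (giving $w \leq \tfrac12 w(x)$ on $B_{4\theta\delta}(x_*)$ after an appropriate choice of $\theta = \theta(\lambda,N_0)$). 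These two estimates are incompatible on the overlap unless $\epsilon$ is bounded below, and then $\tilde x$ is taken as the maximizer on $\pa B_{(1-\theta)\delta}(x)$. You instead pass to a scale-invariant normalization, run a compactness/blow-up argument, and derive a contradiction from the strong maximum principle applied to $1 - \hat w_\infty$. Both arguments are sound; the paper's approach is more elementary (it never leaves the single fixed solution, avoids $G$-compactness and the stability of solutions under $G$-convergence, and yields constants that could in principle be tracked explicitly), while yours is conceptually shorter and cleaner but outsources the hard estimates to the $G$-compactness package and produces no explicit $\theta_0, C_3$. Two small remarks: the uniform bound you derive by iterating the doubling estimate should use base $e$ rather than $2$ to match the paper's $\log$ convention (immaterial for the constants), and it is worth noting explicitly that the strong maximum principle for nonnegative solutions of divergence-form equations with merely bounded measurable coefficients, which you invoke, is itself a consequence of the Harnack inequality — the same ingredient the paper uses directly — so the two proofs are not as far apart in substance as they appear.
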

\begin{proof}
    Suppose that $w(x)>0$ and let $\delta = \delta(x)$. Set $\epsilon \equiv( 
\sup_{B_{(1-\theta)\delta}(x)} w)/ w(x) -1 >0$ with a positive and small $\theta$ to be 
chosen later. Since $L(w(\cdot)-w(x)) = 0$, by the usual Harnack's inequality,
        \begin{equation}\label{Enlarge Chain Size 1}
            \sup_{B_{(1-2\theta)\delta}(x)} | w(\cdot) - w(x)| 
            \leq  C(\lambda,\theta) \cdot \sup_{B_{(1-\theta)\delta}(x)} \big( w(\cdot) - w(x) \big) = C(\lambda, \theta) \cdot \epsilon w(x).   
        \end{equation}
    By the definition of $N_w \leq N_0$ and the usual Harnack's inequality, we have that
        \begin{equation}
            \sup_{B_{2\delta}(x)}| w| \leq 4^{ N_0}  \sup_{B_{\delta/2}(x)}| w| \leq 4^{N_0} \cdot  C(\lambda)  \cdot w(x) .
        \end{equation}
    On the other hand, since there is an $x_* \in Z(w)$ such that $|x-x_*| = \delta$, the De Giorgi's theorem yields that
        \begin{equation}\label{Enlarge Chain Size 2}
            \sup_{B_{4 \theta \delta}(x_*)} |w| \leq C(\lambda) \cdot \theta^{\alpha} \sup_{B_{2\delta}(x)} |w| \leq C(\lambda)  4^{N_0} \cdot \theta^{\alpha} \cdot  w(x)
        \end{equation}
    for some $\alpha = \alpha(\lambda)$ and for every $\theta \in (0,1/16)$. Now we choose a $\theta = \theta(\lambda,N_0) \in (0,1)$ such that $C(\lambda) 4^{N_0} \cdot \theta ^{\alpha} < 1/2$ in (\ref{Enlarge Chain Size 2}).
    
    Then, for any $y \in B_{4 \theta \delta }(x_*) \cap B_{(1-2\theta) \delta}(x)$, by (\ref{Enlarge Chain Size 1}) and (\ref{Enlarge Chain Size 2}), we have that 
        \begin{equation}
            \big(1- C(\lambda, \theta) \epsilon \big) \cdot w(x)\leq w(y) \leq \frac{1}{2} \cdot w(x),
        \end{equation}
    which yields that $\epsilon \geq c >0$ for some positive $c = c(\lambda, \theta) = c(\lambda ,N_0)$.
    
    Let $\theta_0 = \theta$, $C_3 = C_3(\lambda,N_0) = \epsilon +1$ and $\tilde{x}$ is a 
point on $\pa B_{(1-\theta)\delta}(x)$ which takes the value of $\sup_{B_{(1-\theta)\delta} (x)} w$.
   
\end{proof}

With Lemma \ref{Enlarge Chain Lemma2}, we can proceed to the proof of Theorem 
\ref{Stubborn Ball}.
\begin{proof}[Proof of Theorem \ref{Stubborn Ball}]
    For (i) and (ii), one simply applies Lemma \ref{Enlarge Chain Lemma2} iteratively. 
This iteration that satisfies both (i) and (ii) has to end after finitely many steps. 
We let $m$ and the corresponding $x_m$ as the first time that (iii) of Theorem \ref{Stubborn 
Ball} is valid.

    For (iv), the upper bound of $m$, by (i) and Theorem \ref{Growth Estimate}, we have that
        \begin{equation}
           (C_3) ^m \cdot A_2 \delta(x)^{N_0} \cdot \sup_{B_8}|w|\leq (C_3) ^m |w(x)| \leq |w(x_m)| \leq \sup_{B_8}|w|,
        \end{equation}
    which is equivalent to 
        \begin{equation}\label{Length of Harnack Chain}
            m \leq (- N_0\log(\delta(x)) - \log(A_2))/\log(C_3).
        \end{equation}
    Since $C_3 >1$, we get the desired $\xi_1$ and $\xi_2$.

    For the Corkscrew Condition, we first assume that $\delta(x_m)\leq 1/4$ and $\sup_{B_8}|w|=1$. From Theorem \ref{Growth Estimate}, there are $A_1(\lambda), A_2(\lambda,N_0) >0$ such that
        \begin{equation}
             A_1 \cdot \delta(y)^{\alpha} \geq |w(y)| \geq  A_2 \cdot \delta(y)^{N_0}
        \end{equation}
    for any $y\in B_{2}$. Hence, it suffices to show that $|w(x_m)| \geq 
C(\lambda,N_0)>0$. Because
        \begin{equation}
         |x_0 - x_m| \leq \sum_{i=0} ^{m-1} |x_i - x_{i+1}| 
        \end{equation}
    and 
        \begin{equation}
            |x_i - x_{i+1}| \leq \delta(x_i)\leq A_2 
^{-1/N_0}|w(x_i)|^{1/N_0}\leq 
A_2 ^{-1/N_0}|w(x_m)|^{1/N_0} \cdot C_3 ^{(i-m)/N_0},
        \end{equation}
    also, $|x_0 - x_m| \geq 2 - 1 = 1$ and $\sum_{i=0} ^{m-1} C_3 
^{(i-m)/N_0}$ 
is bounded by $1/(C_3^{1/N_0} - 1)$, we get a desired lower bound for 
$|w(x_m)|$.

\end{proof}

A direct corollary of the Corkscrew Condition is the local boundedness of number of nodal domains.

\begin{corollary}\label{Number of Domains}
    \textit{
    Suppose that $L(w) = 0$ in $B_{10}$, with $0 \in Z(w)$ and $N_w \leq N_0 <\infty$. Then, the number of nodal domains in $B_4$ which have nonempty intersections with $B_1$ is bounded by a positive integer $T_0 = T_0(\lambda,N_0)$.
    }
\end{corollary}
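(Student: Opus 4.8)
The plan is to convert the Corkscrew Condition of Theorem~\ref{Stubborn Ball} into a volume packing bound. First I would list the distinct nodal domains $\Omega_1,\dots,\Omega_T$ of $w$ that meet $B_1$; since these are connected components of $\{x\in B_{10}\mid w(x)\ne 0\}$, they are pairwise disjoint open sets, and $T$ is exactly the quantity to be bounded. For each $j$ I pick some $x\in\Omega_j\cap B_1$ and feed it to Theorem~\ref{Stubborn Ball}: the chain it produces terminates at a point $x_m^{(j)}\in B_3$ with $\delta(x_m^{(j)})>c_4(\lambda,N_0)$, so part (2) of that theorem gives a ball $B_{c_4/2}(y_j)\subset \Omega_j\cap B_4$. (If one wants to see this directly: $B_{c_4}(x_m^{(j)})$ avoids $Z(w)$, is connected and meets $\Omega_j$, hence lies in $\Omega_j$, and it is contained in $B_{3+c_4}\subset B_4$ because $c_4<1/4$.)

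Next I would observe that because the $\Omega_j$ are pairwise disjoint, the balls $B_{c_4/2}(y_1),\dots,B_{c_4/2}(y_T)$ are pairwise disjoint, and all lie in $B_4$. Summing volumes,
\[
    T\cdot\omega_n\,(c_4/2)^n \;\le\; \vol(B_4) \;=\; \omega_n\, 4^n,
\]
with $\omega_n$ the volume of the unit ball, so $T\le (8/c_4)^n$. I would then set $T_0=T_0(\lambda,N_0):=\bigl\lceil (8/c_4(\lambda,N_0))^n\bigr\rceil$, which depends only on $n$, $\lambda$, and $N_0$, finishing the proof.

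There is essentially no analytic obstacle: all the quantitative content is already packed into Theorem~\ref{Stubborn Ball} (and, through it, into the growth estimate Theorem~\ref{Growth Estimate} and Lemma~\ref{Enlarge Chain Lemma2}), and what remains is a one-line packing count. The only point that needs a word of care is justifying that the corkscrew balls are genuinely pairwise disjoint and genuinely inside $B_4$; both follow at once from the facts that distinct nodal domains are disjoint open sets and that the corkscrew point lies in $B_3$ at distance more than $c_4$ from $Z(w)$.
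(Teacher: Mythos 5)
Your proof is correct and is exactly the packing argument the paper has in mind when it calls this a ``direct corollary of the Corkscrew Condition'': one disjoint ball of radius $\sim c_4$ per nodal domain, all inside $B_4$, then a volume count. The paper does not spell it out, and your write-up fills in the details in the natural way.
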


\section{Boundary Harnack, H\"{o}lder Continuity and Entire Solutions}\label{Section BDD}

We will use the corkscrew property of nodal domains to provide versions of boundary Harnack principle. We observe first the following:

\begin{lemma}
    \textit{
    Assume that $\mc{L}u = \mc{L}v = 0$ in $B_{10}$, with $\sup_{B_8} |u| = \sup_{B_8}|v| = 1$, and $N_D(u,B_8) \leq N_0 <\infty$. If $1 <2^{N_0 +1}<m < \infty$, then $N_D(mu-v, B_8) \leq N_0 + 2 < \infty$.
    }
\end{lemma}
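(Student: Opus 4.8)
The plan is to prove this by elementary size estimates on the two suprema that define $N_D(mu-v,B_8)$; the only input needed is linearity of $\mc{L}$ and continuity of solutions. Throughout I read $N_D$ with base-$2$ logarithm (consistent with the paper's convention that $N_w\le N_0$ means $\widetilde{N}_D(w,B_8)\le N_0$), i.e. $N_D(w,B_8)=\log_2\big(\sup_{B_8}|w|/\sup_{B_4}|w|\big)$.

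First I would record a lower bound for $u$ at the scale $B_4$: since $\sup_{B_8}|u|=1$ and $N_D(u,B_8)\le N_0$, we get $\sup_{B_4}|u|\ge 2^{-N_0}\sup_{B_8}|u|=2^{-N_0}$. As $u\in C^{1,\alpha}(B_9)$, this supremum is attained at some $x_0\in\overline{B_4}\subset B_8$, so $|u(x_0)|\ge 2^{-N_0}$, while $|v(x_0)|\le \sup_{B_8}|v|=1$. Note also that $\mc{L}(mu-v)=0$ in $B_{10}$ by linearity, so $mu-v$ is continuous on $B_9$ and $N_D(mu-v,B_8)$ will make sense once we check $mu-v\not\equiv 0$ near $B_4$.

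Next I would estimate the two suprema. By the triangle inequality, $\sup_{B_8}|mu-v|\le m\sup_{B_8}|u|+\sup_{B_8}|v|=m+1<\infty$. Evaluating at $x_0$ and using $m>2^{N_0+1}>2^{N_0}$,
\[
\sup_{B_4}|mu-v|\;\ge\;|mu(x_0)-v(x_0)|\;\ge\;m|u(x_0)|-|v(x_0)|\;\ge\;m\,2^{-N_0}-1\;>\;0,
\]
so $mu-v\not\equiv 0$ on $\overline{B_4}$ and $N_D(mu-v,B_8)$ is a well-defined finite nonnegative number satisfying
\[
N_D(mu-v,B_8)\;\le\;\log_2\!\left(\frac{m+1}{m\,2^{-N_0}-1}\right).
\]

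Finally it remains to verify the elementary inequality $\frac{m+1}{m\,2^{-N_0}-1}\le 2^{N_0+2}$ whenever $m>2^{N_0+1}$, which (since $m\,2^{-N_0}-1>0$) rearranges to $2^{N_0+2}+1\le 3m$; this holds because $3m>3\cdot 2^{N_0+1}=2^{N_0+2}+2^{N_0+1}\ge 2^{N_0+2}+1$. Hence $N_D(mu-v,B_8)\le N_0+2<\infty$, as claimed. There is no genuinely hard step here; the only point to watch is keeping the logarithm base consistent with the normalization of $N_D$, since the hypothesis $2^{N_0+1}<m$ is precisely what makes the closing numerical inequality work in base $2$ — with a different base one would need a correspondingly rescaled threshold on $m$.
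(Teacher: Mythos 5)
Your proof is correct and follows essentially the same route as the paper's: you bound $\sup_{B_8}|mu-v|\le m+1$ from above and $\sup_{B_4}|mu-v|\ge m\cdot 2^{-N_0}-1$ from below (using $\sup_{B_4}|u|\ge 2^{-N_0}$), and then verify that the ratio is at most $2^{N_0+2}$ under the hypothesis $m>2^{N_0+1}$. The only cosmetic difference is in the closing elementary inequality, where the paper estimates $\frac{m+1}{m2^{-N_0}-1}\le 2\cdot\frac{m+1}{m2^{-N_0}}\le 4\cdot 2^{N_0}$ while you rearrange directly to $2^{N_0+2}+1\le 3m$; both are the same arithmetic.
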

\begin{proof}
    \begin{equation}
        \frac{\sup_{B_8}|mu-v|}{\sup_{B_{4}} | mu-v|} \leq \frac{m +1}{m \cdot \sup_{B_{4}}|u| - 1} \leq \frac{m+1}{m \cdot 2^{-N_0} -1} \leq 2 \frac{m+1}{m \cdot 2^{-N_0}} \leq 4 \cdot 2^{N_0}
    \end{equation}
\end{proof}

\begin{theorem}\label{Local Boundedness}
    \textit{
    Suppose that $\mc{L}u=\mc{L}v = 0$ in $B_{10}$, and $0 \in Z(u) \subset Z(v)$, with $\sup_{B_8} |u| = \sup_{B_8}|v| = 1$. If $N_u \leq N_0 < \infty$, then there is a positive constant $C = C(\Lambda,N_0) < \infty$ such that $|v/u| \leq C $ in $B_{1} \backslash Z(u)$ .
    }
\end{theorem}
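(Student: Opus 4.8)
My plan follows the Carleson-estimate strategy of \cite{CFMS81}, \cite{JK82}, \cite{LM16}, adapted to nodal domains by means of the geometry established in Theorem \ref{Stubborn Ball}. The hypotheses already include the normalization $\sup_{B_8}|u|=\sup_{B_8}|v|=1$, so the goal is a constant $C=C(\Lambda,N_0)$ with $|v(x)|\le C|u(x)|$ whenever $x\in B_1$ and $u(x)\neq0$. The starting point is the elementary identity $\dv\big(u^2A\,\nabla(v/u)\big)=A\nabla u\cdot\nabla v-A\nabla v\cdot\nabla u=0$, valid on any nodal domain of $u$; thus $g:=v/u$ solves a uniformly elliptic divergence-form equation in the interior of each nodal domain of $u$, while on the boundary of such a domain both $u$ and $v$ vanish, since $\pa\Omega\cap B_{10}\subset Z(u)\subset Z(v)$. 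By Corollary \ref{Number of Domains} only $T_0=T_0(\lambda,N_0)$ nodal domains of $u$ meet $B_1$, so it suffices to bound $|g|$ on $\Omega\cap B_1$ for a single such $\Omega$; replacing $u$ by $-u$ if necessary I assume $u>0$ on $\Omega$.

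The regime away from $Z(u)$ is immediate. If $x\in\Omega\cap B_1$ with $\delta_u(x)\ge c_4/2$, where $c_4=c_4(\lambda,N_0)$ is the corkscrew constant of Theorem \ref{Stubborn Ball}, then $\delta_u(x)\le|x|\le1$ forces $\delta_u(x)^{N_0}\ge(c_4/2)^{N_0}$, so Theorem \ref{Growth Estimate} gives $u(x)\ge A_2(c_4/2)^{N_0}=:c_\ast(\lambda,N_0)>0$; together with $|v(x)|\le\sup_{B_1}|v|\le1$ this yields $|g(x)|\le c_\ast^{-1}$. The whole content therefore lies in the regime $0<\delta_u(x)<c_4/2$, which is exactly the Carleson-type estimate of \cite{CFMS81}, \cite{JK82}, \cite{LM16} in the present setting.

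For such an $x$ I would first run the modified Harnack chain of Theorem \ref{Stubborn Ball} inside $\Omega$, producing $x=x_0,x_1,\dots,x_m\in\Omega$ with $u(x_{i+1})\ge C_3\,u(x_i)$, $|x_{i+1}-x_i|\le(1-\theta_0)\delta_u(x_i)$, $m\le-\xi_1\log\delta_u(x)+\xi_2$, and a terminal corkscrew point $x_m$ with $\delta_u(x_m)>c_4$, hence (as in the first case) $|g(x_m)|\le C(\lambda,N_0)$. Each ball $B_{\delta_u(x_i)}(x_i)$ lies in $\Omega$, on it $u$ has constant sign, and, combining the interior Harnack inequality with the doubling property of $u$ (Theorem \ref{Doubling Condition} together with Lemma \ref{Equivalence of Norm}), $\sup_{B_{\delta_u(x_i)}(x_i)}u\le C(\lambda,N_0)\,u(x_i)$; after subdividing each step of the chain into $O(1/\theta_0)$ substeps, consecutive points lie in a common ball on which $u^2$ — hence the leading coefficient of the equation for $g$ — is, after rescaling, uniformly elliptic with constants depending only on $\lambda$. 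The plan is then to transport the bound on $|g(x_m)|$ in to $x$ by the now-standard argument of \cite{CFMS81}, \cite{JK82}, \cite{LM16}: one works on subdomains $\Omega\cap B_\rho(z)$ with $z\in Z(u)$ and applies the maximum principle to $Cu\pm v$, where the part of $\pa(\Omega\cap B_\rho(z))$ lying in $\pa\Omega$ contributes nothing since $Cu\pm v\equiv0$ there (using $Z(u)\subset Z(v)$), while the part lying on $\pa B_\rho(z)$ is handled by replacing it, via the corkscrew point, by an inner sphere on which $u$ enjoys a definite lower bound; iterating over dyadic scales and using that the Harnack chain has only logarithmic length $m$, the accumulated constant is a fixed power of $\delta_u(x)$, which is absorbed by the lower bound $u(x)\ge A_2\,\delta_u(x)^{N_0}$ of Theorem \ref{Growth Estimate}. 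The outcome is $|g(x)|=|v(x)/u(x)|\le C(\Lambda,N_0)$ as desired.

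I expect the last step — the propagation from a corkscrew point in to $Z(u)$, i.e. showing that $v/u$ does not blow up along $\pa\Omega$ — to be the main obstacle. Its delicate feature is that on the outer spheres $\Omega\cap\pa B_\rho(z)$ entering the maximum principle $u$ has no a priori lower bound, so one cannot conclude merely from the fact that $v$ vanishes at a single nearby point of $Z(u)$; the argument must use that $v$ vanishes on the whole of $Z(u)\cap\pa\Omega$ together with the doubling of $u$. The role of Theorem \ref{Stubborn Ball} is precisely to furnish the corkscrew condition and the modified Harnack chains without which the NTA-style Carleson estimate of \cite{CFMS81}, \cite{JK82}, \cite{LM16} would not be available on this class of nodal domains.
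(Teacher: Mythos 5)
Your plan diverges from the paper's proof at the decisive point, and the sketch you give does not close the gap you yourself flag as "the main obstacle." Running the Harnack inequality for $g=v/u$ (or for $v$ alone) along the modified Harnack chain of Theorem \ref{Stubborn Ball} accumulates a multiplicative constant at each of the $m$ steps; since $m\le -\xi_1\log\delta_u(x)+\xi_2$, the accumulated factor grows like $\delta_u(x)^{-\beta}$ for a positive $\beta$, i.e. it \emph{diverges} as $x\to Z(u)$. This factor is not absorbed by the lower bound $u(x)\ge A_2\,\delta_u(x)^{N_0}$ of Theorem \ref{Growth Estimate}: dividing an upper bound for $v(x)$ that already carries a negative power of $\delta_u(x)$ by a lower bound for $u(x)$ that decays like $\delta_u(x)^{N_0}$ only worsens the estimate. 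Likewise, the maximum principle for $Cu\pm v$ on $\Omega\cap B_\rho(z)$ is circular as stated: on the interior portion $\Omega\cap\pa B_\rho(z)$ of the boundary one would need $Cu - v\ge 0$, which is precisely the conclusion being sought. The observation that $g$ solves $\dv(u^2A\nabla g)=0$ is correct but does not help here, because that equation degenerates exactly where the estimate is hard.

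The missing idea is the short lemma that precedes Theorem \ref{Local Boundedness} in the paper: for $C>2^{N_0+1}$ one has $N_D(Cu-v,B_8)\le N_0+2$, so the doubling index of $Cu-v$ is uniformly bounded. The paper then applies Theorem \ref{Stubborn Ball} (corkscrew condition and modified Harnack chain) \emph{to $Cu-v$}, not to $u$. Because $Z(u)\subset Z(Cu-v)$, a corkscrew point $x_m$ of a nodal domain of $Cu-v$ meeting $B_1$ satisfies $\dist(x_m,Z(u))\ge\dist(x_m,Z(Cu-v))>c_4$, so Theorem \ref{Growth Estimate} gives a uniform lower bound $|u(x_m)|\ge c_*(\Lambda,N_0)$; choosing $C$ so that $|Cu(x_m)|>2\ge |v(x_m)|+1$ forces $(Cu-v)(x_m)$ and $u(x_m)$ to share a sign. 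A contradiction argument then rules out any $x\in B_1$ with $u(x)>0$ and $(Cu-v)(x)<0$: the connected component of $\{Cu-v<0\}$ containing such an $x$ would sit inside a single nodal domain of $u$ yet avoid the corkscrew region of $Cu-v$, so it has nowhere to put its own corkscrew point. The same argument with $Cu+v$ finishes. This route never transports bounds along a chain and so never meets the divergent accumulation of constants that stalls your plan. The dyadic two-scale decay you gesture at is carried out later in the paper, in Lemma \ref{Iteration Decay} (a De Silva--Savin argument) for Theorem \ref{Boundary Harnack in Single Domain}, but that is a separate iteration scheme that your proposal neither states nor verifies.
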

\begin{proof}
    First, we show that there is a large $C = C(\Lambda ,N_0)$ such that $Cu-v$ has the same nodal domains as $u$ in 
$B_{1}$. Set $\delta(x) \equiv \delta_{Z(u)}(x)$.
    
    Let $E =\{x \in B_{3} \ | \ (Cu-v)(x) \cdot u(x) > 0\ \}$. We can first assume that $C > 2^{N_0+1}$ 
as in the previous lemma and then  by (\ref{Equivalence Frequency 2}) get that $N_{Cu-v} \leq \mc{N}_0 \equiv K_1(\Lambda)(N_0+2) + K_2(\Lambda)$ for some $K_1,K_2 >0$. Let $ E_1 = \{x \in B_3 \ | \  \delta(x)>c_4 / 
8\}$, where $c_4 = c_4(\mc{N}_0,\Lambda)$ is the same constant appeared in the Corkscrew Condition of 
Theorem \ref{Stubborn Ball} for $Cu-v$. By Theorem \ref{Growth Estimate}, we have $|u(x)| > A_2 
(c_4/8)^{N_0} \equiv c$ for any $x \in E_1$. 
    Let us fix $C = 2\max\{c^{-1} , 2^{N_0 +1} \} $. For this $C$, we have $E_1 \subset E$ because for 
any $x \in E_1$, we have $|Cu(x)|>2$, and then $(Cu-v)(x)$ and $u(x)$ must have the same sign.
    
    For this fixed $C$, assume that there is an $x \in B_{1}$ such that $u(x) > 0$ but $(Cu-v)(x) 
\leq 0$. Note that if $(Cu-v)(x) =0$, by the strong maximum principle and unique continuation, we can 
always choose another point $y$ arbitrarily close to $x$ with $(Cu-v)(y) <0$ but $u(y) >0$. So , we 
assume that $(Cu-v)(x) <0$.
    Therefore, this $x$ is in a negative nodal domain $\Omega$ of $Cu-v$ in $B_3$, which means that $Cu-v<0$ 
in $\Omega$ and $\Omega$ is in the complement of $E$. On the other hand,  since $x$ is in a positive nodal domain 
of $u$ in $B_3$, which we 
denote as $\Omega_1$,  $\Omega$ is contained in $\Omega_1$ and is certainly not connected with other 
nodal domains of $u$. So, $\Omega \subset \Omega_1 \backslash E $.
    By the corkscrew property as in the Theorem \ref{Stubborn Ball} for $Cu-v$ (note the doubling 
index is bounded by $N_0 + 2$ independent of large $C$) , there is a point $x_m \in \Omega \cap 
B_3$ such that $\delta(x_m) \geq \dist(x_m,Z(Cu-v))  > c_4$, which is clearly impossible by
our construction of $E_1$ and the fact that $E_1 \subset E$.
    Hence we have proved that $B_{1}\backslash Z(u) \subset E$, which means that $Cu-v$ has the same 
nodal domains as $u$ in $B_{1}$.

    Similarly, for the same $C$, we can show that $C u + v$ has the same nodal domains as $u$ in $B_{1}$. Hence, $|v/u| \leq C$ in $B_{1} \backslash Z(u)$.
\end{proof}

\begin{corollary}\label{Two Sides Bounds}
    \textit{
    Suppose that $\mc{L}u=\mc{L}v =0$ in $B_{10}$, $0 \in Z(u) = Z(v)$, and $\sup_{B_8}|u| = \sup_{B_8} |v| = 1$, with $N_u\leq N_0$ and $N_v \leq N_0$ for some positive $N_0 < \infty$. Then, there is a positive constant $C = C(\Lambda, N_0) < \infty$, such that $C^{-1} \leq |v/u| \leq C$ in $B_{1} \backslash Z(u)$.
    }
\end{corollary}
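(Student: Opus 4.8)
The plan is to obtain this as an immediate two-sided consequence of Theorem \ref{Local Boundedness}, applied once in each direction. The hypothesis $0 \in Z(u) = Z(v)$ gives in particular $Z(u) \subset Z(v)$, and together with $N_u \leq N_0$ and the normalization $\sup_{B_8}|u| = \sup_{B_8}|v| = 1$, all the hypotheses of Theorem \ref{Local Boundedness} (for the pair $(u,v)$ and the operator $\mc{L}$) are satisfied. Hence there is a constant $C_1 = C_1(\Lambda, N_0) < \infty$ with $|v/u| \leq C_1$ in $B_1 \backslash Z(u)$.

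For the lower bound I would simply rerun the same argument with the roles of $u$ and $v$ interchanged. The set-up is genuinely symmetric: $Z(v) \subset Z(u)$ holds because $Z(u) = Z(v)$, we have $N_v \leq N_0$ by assumption, the normalizations $\sup_{B_8}|u| = \sup_{B_8}|v| = 1$ are unchanged, and both functions solve equations for the same $\mc{L}$. Theorem \ref{Local Boundedness} applied to $(v,u)$ then yields a constant $C_2 = C_2(\Lambda, N_0) < \infty$ with $|u/v| \leq C_2$ in $B_1 \backslash Z(v)$.

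It remains to combine the two estimates. Since $Z(u) = Z(v)$, we have $B_1 \backslash Z(u) = B_1 \backslash Z(v)$, and on this common set both $u$ and $v$ are nonzero, so $v/u$ and $u/v$ are well-defined there and are reciprocals of each other. The first estimate gives $|v/u| \leq C_1$, and the second gives $|v/u| = |u/v|^{-1} \geq C_2^{-1}$, both valid in $B_1 \backslash Z(u)$. Taking $C = \max\{C_1, C_2\}$ yields $C^{-1} \leq |v/u| \leq C$ in $B_1 \backslash Z(u)$, as claimed.

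There is no substantial obstacle here: the only points to verify are that the hypotheses of Theorem \ref{Local Boundedness} are symmetric in $u$ and $v$ once $N_v \leq N_0$ is assumed (which the corollary assumes precisely so that the role-swapped application is legitimate), and that the common zero set is exactly the locus outside of which neither function vanishes, so that both ratios make sense. The subsequent Theorem \ref{Main Frequency} will in fact show that the extra hypothesis $N_v \leq N_0$ is automatic, but for this corollary it is taken as given.
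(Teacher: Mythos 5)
Your proof is correct and is essentially the paper's own argument: the paper's proof is the one-line remark ``Switch the position of $u$ and $v$ in Theorem \ref{Local Boundedness},'' which is precisely the symmetric double application you carry out (with the verification that $N_v\le N_0$ and $Z(v)\subset Z(u)$ make the role-swapped hypotheses hold, and that the two bounds combine on the common set $B_1\setminus Z(u)=B_1\setminus Z(v)$).
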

\begin{proof}
    Switch the position of $u$ and $v$ in Theorem \ref{Local Boundedness}.
\end{proof}

\begin{remark}
We should note that both the Theorem \ref{Local Boundedness} and the Corollary \ref{Two Sides Bounds} remains true when $\mc{L}$ is replaced by $L$, see
Theorem \ref{Boundary Harnack in Single Domain} in section \ref{Section Single}. On the other hand, by Theorem \ref{Bound Frequency} that we will prove in the next section, we can drop the assumption 
that $N_v \leq N_0$ because Theorem \ref{Bound Frequency} implies that $N_v \leq 
D(\Lambda,N_0) < \infty$ on $B_1$. And consequently one can prove the boundedness of $|v/u|$ on $B_{1/10}\backslash Z(u)$ as in Theorem \ref{Local Boundedness}.
\end{remark}

Next, we show that $v/u$ satisfies a strong maximum principle, which was noted in the Remark 
2.8 of \cite{LM15} for the case $\mc{L} = \Delta$.

\begin{theorem}\label{Strong Maximum Principle}
    \textit{
    Suppose that $\mc{L}u = \mc{L}v = 0$ in $B_{10}$, and $Z(u) \subset Z(v)$, then $\sup_{B_8} v/u$ cannot be achieved at $x \in B_8$ if $v/u$ is not a constant.
    }
\end{theorem}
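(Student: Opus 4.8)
The plan is to derive an equation satisfied by $f \equiv v/u$ away from $Z(u)$, show it is a uniformly elliptic equation of the form $\mathrm{div}(u^2 A \nabla f) = 0$ in $B_8 \setminus Z(u)$, and then argue that the strong maximum principle for this degenerate-weight equation, combined with the a priori boundedness of $f$ near $Z(u)$ from Theorem \ref{Local Boundedness}, forces $f$ to be constant if the supremum is attained at an interior point.

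\smallskip

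First I would compute: since $\mathcal{L}u = \mathcal{L}v = 0$, a direct calculation in $B_8 \setminus Z(u)$ gives
\begin{equation}\label{eq:ratio-eq}
\mathrm{div}\big(u^2 A(x) \nabla f\big) = \mathrm{div}\big(u A \nabla v - v A \nabla u\big) = u \,\mathcal{L}v - v\, \mathcal{L}u = 0,
\end{equation}
where I use $u^2 A \nabla f = u^2 A \nabla(v/u) = u A \nabla v - v A \nabla u$. So $f$ solves the equation $L_u f := \mathrm{div}(u^2 A \nabla f) = 0$, which on any compact subset of $B_8 \setminus Z(u)$ is uniformly elliptic (since $u$ is continuous and nonvanishing there) with continuous coefficients. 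Hence the classical strong maximum principle applies away from $Z(u)$: if $f$ attains an interior maximum at a point of $B_8 \setminus Z(u)$, then $f$ is locally constant there, hence constant on the connected component — and by unique continuation for $L_u$, or rather by connectedness considerations, one propagates this.

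\smallskip

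The main obstacle — and the real content beyond the case $\mathcal{L} = \Delta$ — is handling a putative maximum point $x_0 \in B_8 \cap Z(u)$. Here both $u$ and $v$ vanish, so $f$ is a priori only defined on $B_8 \setminus Z(u)$, and equation \eqref{eq:ratio-eq} degenerates. The key input is Theorem \ref{Local Boundedness} (applied after rescaling and using $Z(u) \subset Z(v)$), which tells us $f$ is bounded near $Z(u)$; I would then argue that $f$ extends to a function that is, in an appropriate weak sense, a solution of $L_u f = 0$ across $Z(u)$, using that $H^{n-1}(Z(u) \cap B_8) < \infty$ by \eqref{Measure Zero Set} so $Z(u)$ is removable for bounded solutions of the weighted equation (the weight $u^2$ vanishes on $Z(u)$, which actually helps rather than hurts the removability). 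Alternatively, and perhaps more cleanly, one blows up at $x_0$: rescaled limits of $u$ and $v$ converge in $C^{1,\alpha}_{\mathrm{loc}}$ by the compactness of $\mathcal{S}_{N_0}(\Lambda)$, the ratio of the blow-up limits is still bounded, and one reduces to a tangent situation (e.g. homogeneous solutions) where a direct argument excludes an interior max of $v/u$ on $Z(u)$ unless $v/u$ is constant. Either route, the upshot is that the supremum of $f$ over $B_8$ cannot be attained — whether at a point of $Z(u)$ or off it — unless $f$ is constant, which is the claim.
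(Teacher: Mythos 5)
Your Step 1 (away from $Z(u)$) is correct in substance but more elaborate than needed: the identity $\mathrm{div}(u^2A\nabla f)=\mathrm{div}(uA\nabla v - vA\nabla u)$ is right, but the paper's route is to consider $Mu-v$ with $M=\sup_{B_8}v/u$; on each nodal domain of $u$ this function has a fixed sign and solves $\mc{L}(Mu-v)=0$, so the \emph{ordinary} strong maximum principle for $\mc{L}$ (no degenerate weight) excludes an interior max off $Z(u)$, and unique continuation for $\mc{L}(Mu-v)=0$ --- not for the degenerate operator $L_u$ --- propagates constancy across nodal domains if $Mu-v$ ever vanishes on an open set. Your phrase ``by unique continuation for $L_u$, or rather by connectedness considerations'' glosses over the fact that $B_8\setminus Z(u)$ is disconnected; the correct propagation mechanism is unique continuation for $\mc{L}$ applied to the globally defined solution $Mu-v$.

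The real gap is in Step 2, both of your proposed routes. The removability route rests on the claim that $H^{n-1}(Z(u)\cap B_8)<\infty$ together with the vanishing weight $u^2$ makes $Z(u)$ removable for bounded weak solutions of $\mathrm{div}(u^2A\nabla f)=0$. This is not a standard removability criterion and you give no argument for it. For the unweighted Laplacian, sets of Hausdorff dimension $n-1$ are \emph{not} removable for bounded solutions (e.g. $|x_n|$ is bounded and harmonic off $\{x_n=0\}$); removability requires zero $W^{1,2}$-capacity, roughly $H^{n-2}$-null. Whether a weight vanishing on the set helps is genuinely delicate: in 1D, $\mathrm{div}(|x|^{a}\nabla f)=0$ with $0<a<1$ has the bounded nonconstant radial solution $|x|^{1-a}$, so a vanishing weight can make removability \emph{worse}, not better. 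Moreover, even granting the extension, you would still need a strong maximum principle for the degenerate operator $L_u$ on $B_8$ \emph{across} $Z(u)$, where ellipticity fails; that would require checking, e.g., an $A_2$ condition on $u^2$ and a Fabes--Kenig--Serapioni type Harnack inequality, none of which you address. The blow-up route is too vague to evaluate: you do not say what the ``direct argument'' in the tangent picture is, and the blow-up can easily lose the information you need (if $u$ and $v$ share the same leading polynomial at $x_0$, the blown-up ratio is constant while the original ratio need not attain its maximum at $x_0$).

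What the paper actually does at a point $x_0\in Z(u)\cap B_8$ is quantitative and much shorter. Since $Z(u)\cap B_8=Z(Mu-v)\cap B_8$ (from Step 1), Theorem \ref{Local Boundedness} applies with the roles of the two functions swapped, with the ``$u$'' of that theorem taken to be $Mu-v$ (whose local doubling index $\widetilde{N}_D(Mu-v,B_{10r}(x_0))$ is finite). This gives, on a small ball,
\begin{equation*}
\inf_{B_r(x_0)\setminus Z(u)}\Big|M-\frac{v}{u}\Big|
= \frac{1}{\sup_{B_r(x_0)\setminus Z(u)}\big|u/(Mu-v)\big|}
\ \geq\ C\,\frac{\sup_{B_{8r}(x_0)}|Mu-v|}{\sup_{B_{8r}(x_0)}|u|}\ >\ 0,
\end{equation*}
so $v/u$ stays a fixed distance below $M$ near $x_0$ and the supremum cannot be approached, let alone attained, there. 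This replaces your removability/blow-up discussion entirely and is the ingredient your proposal is missing.
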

\begin{proof}
    Denote $\sup_{B_8} v/u$ as $M$, we consider $Mu-v$. We can assume that $Mu -v \not\equiv 0$. For $x \in B_8$, if $u(x) >0$, then $M\geq v(x)/u(x)$ and then $Mu(x) - v(x) \geq 0$. By the usual strong maximum principle, we know $Mu(x) - v(x) >0$.
    Similarly, if $u(x) <0$, we have $Mu(x) - v(x) <0$. Hence, $M$ is not achieved at $x \in B_8 \backslash Z(u)$. These also tell us that $Z(u) \cap B_8 = Z(Mu-v) \cap B_8$. 

    Now, for any $x_0 \in Z(u) \cap B_8$, 
consider $B_{10r}(x_0)$ for some $r$ small enough. By Theorem \ref{Local Boundedness},
        \begin{equation}
            \begin{split}
                \inf_{B_{r}(x_0) \backslash Z(u)} \bigg| M -\frac{v}{u}\bigg| &= \inf_{B_{r}(x_0) \backslash Z(u)} \bigg| \frac{Mu-v}{u}\bigg| 
                \\  &= \frac{1}{\sup_{B_{r}(x_0) \backslash Z(u)} |u/(Mu-v)|}
                \\  &\geq C \cdot \frac{\sup_{B_{8r}(x_0)}|Mu-v|}{\sup_{B_{8r}(x_0)}|u|} > 0,
            \end{split}
        \end{equation}
    where $C$ is a positive constant depending on $\Lambda$ and $\widetilde{N}_D (Mu-v, B_{10r}(x_0)) < 
\infty$. Hence, we conclude that in $B_{r}(x_0)$, $M > v/u$. And then $M > (v/u)(x)$ for any $x$ 
strictly inside $B_8$. 
\end{proof}

To end this section, we are going to work on the continuity of $v/u$. If we only need the continuity of $v/u$ at some point $x_0 \in Z(u)$, we can consider $u$ and $v-(v/u)(x_0) \cdot u$ in Theorem \ref{Local Boundedness} and use Taylor expansion of $u, v$ at $x_0$. See \cite{H94} for the Taylor expansion and \cite{LM15} for more in harmonic functions cases. But in this way, the continuity scale will depend on the point $x_0$. 
In $\mb{R}^2$ case, this way will also give differentiability of $v/u$ since the formal gradient of $v/u$ at $x_0 \in S(u) = \{x \ | \ u(x) = |\nabla u|=0\}$ is $0$.

Here, we are going to show the H\"{o}lder continuity of $v/u$, and the proof is quite standard if we 
apply also the conclusion of Theorem \ref{Bound Frequency} which will be proven in the next 
section.

\begin{theorem}\label{Holder Continuity}
    \textit{
    Suppose that $\mc{L}u=\mc{L}v = 0$ in $B_{10}$, $N_u \leq N_0 < \infty$, and $0 \in Z(u) \subset Z(v)$, then $v/u \in C^\alpha(B_{1/10})$ for some $\alpha = \alpha(\Lambda,N_0) \in (0,1)$.
    }
\end{theorem}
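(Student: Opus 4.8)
The plan is to run the classical oscillation-decay iteration of \cite{CFMS81} and \cite{JK82}, adapted to ratios of solutions, using the two-sided bounds and the frequency bound as black boxes. First I would note that by Theorem \ref{Bound Frequency} (to be proven in the next section) the hypothesis $0 \in Z(u) \subset Z(v)$ together with $N_u \le N_0$ forces $N_v(0,1) \le D(\Lambda,N_0)$, so after rescaling to the unit ball we are in the situation of Corollary \ref{Two Sides Bounds}: both $u$ and $v$ have frequency bounded by some $N_0' = N_0'(\Lambda,N_0)$, and $C^{-1} \le |v/u| \le C$ on $B_1 \setminus Z(u)$ for some $C = C(\Lambda,N_0)$. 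Since $v/u$ extends continuously across the regular part of $Z(u)$ (by L'Hôpital along the normal direction, or by the Taylor expansion of \cite{H94}) and is bounded, it suffices to prove the oscillation estimate at points of $Z(u)$, the only place continuity is in question; away from $Z(u)$ standard interior $C^{1,\alpha}$ estimates apply directly.

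The heart of the matter is the following decay step: for $x_0 \in Z(u) \cap B_{1/10}$ and $r$ small, setting $M(r) = \sup_{B_r(x_0) \setminus Z(u)} (v/u)$ and $m(r) = \inf_{B_r(x_0) \setminus Z(u)} (v/u)$, I want to show $M(r/8) - m(r/8) \le \gamma \big(M(r) - m(r)\big)$ for some fixed $\gamma = \gamma(\Lambda,N_0) \in (0,1)$. The key point is the Harnack-type estimate stated in the introduction: apply Theorem \ref{Local Boundedness} (in its rescaled form on $B_r(x_0)$, after subtracting) to the pair $u$ and $v - m(r) \cdot u$. Here the function $w_1 := v - m(r)\, u$ is $\mc{L}$-harmonic, vanishes on $Z(u)$, and is nonnegative-sign-synchronized with $u$ on $B_r(x_0) \setminus Z(u)$; crucially its doubling index on $B_{r/2}(x_0)$ is bounded by $N_0' + C$ \emph{independently of the value} $m(r)$ — this is exactly the role of the lemma preceding Theorem \ref{Local Boundedness} (that $N_D(mu-v,\cdot)$ stays bounded), and it is the technical linchpin one must verify carefully in the rescaled setting. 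Granting that uniform frequency bound, Theorem \ref{Local Boundedness} / Corollary \ref{Two Sides Bounds} give $\sup_{B_{r/8}(x_0)\setminus Z(u)}(v/u - m(r)) \le C' \inf_{B_{r/8}(x_0)\setminus Z(u)}(v/u - m(r))$, i.e. $M(r/8) - m(r) \le C'\big(m(r/8) - m(r)\big)$, which rearranges to the desired contraction with $\gamma = 1 - 1/C' < 1$.

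Iterating the decay step over the dyadic-type scales $r_k = 8^{-k} r_0$ gives $\osc_{B_{r_k}(x_0)}(v/u) \le \gamma^k \cdot 2C$, hence $\osc_{B_\rho(x_0)}(v/u) \le C'' \rho^{\alpha}$ with $\alpha = \log(1/\gamma)/\log 8 \in (0,1)$, and combining with the interior estimates away from $Z(u)$ via a standard covering/interpolation argument upgrades this to $v/u \in C^{\alpha}(B_{1/10})$ with constants depending only on $\Lambda$ and $N_0$. The main obstacle, as indicated, is making precise and uniform the claim that subtracting an arbitrary multiple $m(r)\,u$ from $v$ does not destroy the frequency bound at the relevant scale: one must check that after rescaling $B_r(x_0)$ to unit size the normalized function $w_1$ still satisfies $\sup_{B_{1/2}} |w_1| \gtrsim \sup_{B_1}|w_1| \cdot 2^{-CN_0'}$ type bounds, which in turn relies on $|m(r)| \le C$ and the lower doubling bound $\sup_{B_{1/2}}|u| \ge 2^{-N_0'}\sup_{B_1}|u|$ so that cancellation in $v - m(r)u$ cannot be too severe — essentially the computation in the lemma before Theorem \ref{Local Boundedness}, carried out with the rescaled normalization. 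Everything else is the standard \cite{CFMS81}, \cite{JK82} bookkeeping.
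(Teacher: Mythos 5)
Your overall architecture (oscillation decay at boundary points, subtract the running infimum, apply a boundary Harnack/Carleson estimate, iterate dyadically, and interpolate with interior estimates away from $Z(u)$) matches the paper's, and you correctly identify the crux: one must control the frequency of $w_1 := v - m(r)\,u$ at the rescaled scale, where $m(r) = \inf_{B_r(x_0)} v/u$ is a \emph{bounded but otherwise arbitrary} real number. But the mechanism you propose for this is where the argument breaks.

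You appeal to the unnumbered lemma preceding Theorem \ref{Local Boundedness} and to the heuristic that, since $|m(r)| \le C$ and $u$ has bounded doubling, cancellation in $v - m(r)u$ ``cannot be too severe.'' That lemma, however, only applies when the multiplier is \emph{large} ($m > 2^{N_0+1}$): its proof rests on the inequality $\sup_{B_4}|mu - v| \ge m\cdot 2^{-N_0} - 1 > 0$, which forces $mu$ to dominate $v$ so that the doubling index of $mu-v$ is essentially that of $u$. For a bounded $m(r)$ this estimate gives nothing, and the phenomenon it is supposed to rule out is exactly what can happen: $v$ could agree with $m(r)u$ to high order on the inner ball, making $\sup_{B_{r/16}}|w_1|$ arbitrarily small compared to $\sup_{B_{r/2}}|w_1|$. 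Boundedness of $m(r)$ and doubling of $u$ alone do not preclude this cancellation — controlling it is precisely the content of Theorem \ref{Bound Frequency}, not a corollary of the easy lemma.

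The paper's actual filler for this gap is different and essential: it observes that $v^* := v - m(r)u$ has \emph{the same zero set} as $u$ (this follows from the strong maximum principle, Theorem \ref{Strong Maximum Principle}, applied to the extremal value $m(r)$, since $v^*/u \ge 0$ and cannot touch $0$ off $Z(u)$), and then invokes Theorem \ref{Bound Frequency} for the \emph{pair} $(u, v^*)$ — not for $(u,v)$ — to get $N_{v^*} \le D(\Lambda,N_0)$ at the relevant scale. Only then does Corollary \ref{Two Sides Bounds} apply. Your proposal invokes Theorem \ref{Bound Frequency} only once, upfront for $v$ itself; that bound does not pass to $v - m(r)u$. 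Separately, the paper also inserts a dichotomy (either the sup already drops, or the sup stays high and the subtraction argument yields a genuine gain), which protects the step where the oscillation of $v^*/u^*$ on the inner ball is bounded below by a fixed fraction of $\osc_{B_r}(v/u)$; your write-up elides this but it is needed to turn the Harnack inequality on $v^*/u^*$ into a strict contraction. In short: right shape, correct identification of the linchpin, but the tool you reach for cannot bear the weight — the strong maximum principle plus a second application of Theorem \ref{Bound Frequency} to the shifted pair is what actually closes the argument.
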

\begin{proof}
    We are going to show the oscillation decay estimate at $0$. If 
        \begin{equation}
            \sup_{B_{1/100}}\frac{v}{u} \leq \frac{1}{2} (\sup_{B_{1}} \frac{v}{u} + \inf_{B_{1}} \frac{v}{u}),
        \end{equation}
    then
        \begin{equation}
            \sup_{B_{1/100}}\frac{v}{u} - \inf_{B_{1/100}}\frac{v}{u} \leq \frac{1}{2} \bigg(\sup_{B_{1}} \frac{v}{u} - \inf_{B_{1}} \frac{v}{u} \bigg).
        \end{equation}
    If 
        \begin{equation}
            \sup_{B_{1/100}}\frac{v}{u} \geq\frac{1}{2} (\sup_{B_{1}} \frac{v}{u} + \inf_{B_{1}} \frac{v}{u}),
        \end{equation}
    we consider $v^* (x) = (v - (\inf_{B_{1}}(v/u)) \cdot u)(x/10)$ , $u^* (x) = u(x/10)$. Note that $u^*$ and $v^*$ have the same zero set in $B_{10}$ by the proof of Theorem \ref{Strong Maximum Principle}, $ v^* u^*\geq 0$, and $N_{u^*} \leq N_{u} \leq N_0$. By Theorem \ref{Bound Frequency}, $N_{v^*} \leq D=D(\Lambda,N_0)$ in $B_{1}$. Then, by Corollary \ref{Two Sides Bounds}, with a larger constant $C = C(\Lambda,D) = C(\Lambda,N_0)$ in it, we can show, 
        \begin{equation}
            \inf_{B_{1/100}} \frac{v}{u} - \inf_{B_{1}} \frac{v}{u} = \inf_{B_{1/10}} \frac{v^*}{u^*} \geq C^{-2}\sup_{B_{1/10}} \frac{v^*}{u^*} \geq \frac{1}{2C^2} (\sup_{B_{1}} \frac{v}{u} - \inf_{B_{1}} \frac{v}{u}),
        \end{equation}
    and then
        \begin{equation}\label{Oscillation Decay}
            \sup_{B_{1/100}} \frac{v}{u} - \inf_{B_{1/100}} \frac{v}{u} \leq \bigg(1- \frac{1}{2C^2}\bigg) (\sup_{B_{1}} \frac{v}{u} - \inf_{B_{1}} \frac{v}{u}).
        \end{equation}
\end{proof}

A direct corollary of Theorem \ref{Holder Continuity} and (\ref{Oscillation Decay}) is the following Liouville theorem for the case that $\mc{L} = \Delta$. In this case, all the constants $C(\Lambda,N_0)$ will be replaced by $C(n,N_0)$ so that we can do both blow-ups and blow-downs. And all the theorems in this section are valid with constants of the form $C(n,N_0)$.

\begin{corollary}\label{Liouville Theorem}
    \textit{
Suppose that $\Delta(u) = \Delta(v) =0$ in $\mb{R}^n$, $N_u(0,r) < N_0 < \infty$ for all $ r >0$, and $Z(u) \subset Z(v)$. Then, there is a $\beta = \beta(n,N_0) \in (0,1)$, such that if 
 \begin{equation}\label{Liouville Growth}            \liminf_{r \to \infty} r^{-\beta} \cdot \sup_{B_r} \frac{v}{u} < \infty,
        \end{equation}
    we will have that $v = c \cdot u$ for some $c \in \mb{R}$. In particular, if $Z(u) = Z(v)$, the condition (\ref{Liouville Growth}) will be satisfied, and then there is a constant $c \in \mb{R}\backslash\{0\}$ such that $v = c \cdot u$.
    }
\end{corollary}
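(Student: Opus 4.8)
The plan is to iterate the oscillation‑decay estimate~(\ref{Oscillation Decay}) of Theorem~\ref{Holder Continuity} at all scales, and to couple it with a boundary Harnack bound applied to a well‑chosen auxiliary difference. Since $\Delta$ is exactly dilation invariant and $N_u(0,r)<N_0$ is assumed at \emph{every} scale, rescaling $x\mapsto Rx$ in Theorem~\ref{Holder Continuity} and in~(\ref{Oscillation Decay}) shows that $f:=v/u$ extends to $f\in C^{\alpha}_{\loc}(\mb{R}^n)$ (so $c_0:=f(0)$ is well defined) and that
\[
	\osc_{B_{R/100}}f \;\le\; \gamma\,\osc_{B_R}f\qquad\text{for every }R>0,\quad \gamma=\gamma(n,N_0)\in(0,1).
\]
I would set $\beta_0:=\log(1/\gamma)/\log 100>0$ and fix $\beta=\beta(n,N_0):=\min\{\beta_0/2,1/2\}\in(0,1)$, and record the following rigidity fact, obtained by iterating the displayed inequality downward from a large scale: if $\osc_{B_{\rho_j}}f=o(\rho_j^{\beta_0})$ along some sequence $\rho_j\to\infty$, then $f$ is constant (for fixed $\rho$, take $k_j=\lfloor\log_{100}(\rho_j/\rho)\rfloor\to\infty$, so $\osc_{B_\rho}f\le\gamma^{k_j}\osc_{B_{\rho_j}}f\le(100\rho)^{\beta_0}\rho_j^{-\beta_0}\,o(\rho_j^{\beta_0})\to0$). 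Thus it is enough to exhibit scales $R\to\infty$ along which $\osc_{B_R}f$ grows strictly slower than $R^{\beta_0}$.

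Next I would use hypothesis~(\ref{Liouville Growth}): pick $M<\infty$ and $R_j\to\infty$ with $\sup_{B_{R_j}}f\le MR_j^{\beta}$, and consider the harmonic function $w_j:=2MR_j^{\beta}u-v$. From $Z(u)\subset Z(v)$ one gets $Z(u)\subset Z(w_j)$, and on $B_{R_j}$ one has $w_j/u=2MR_j^{\beta}-f\ge MR_j^{\beta}>0$; hence $w_j$ vanishes on $B_{R_j}$ exactly on $Z(u)\cap B_{R_j}$. The idea is to Harnack‑estimate the positive ratio $w_j/u$ on a ball of radius $\sim R_j$ against its boundary value $2MR_j^{\beta}-c_0$ at the origin, and read off a lower bound for $f$.

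To make this precise I would rescale $B_{R_j}$ to $B_{10}$ and apply Theorem~\ref{Main Frequency} to the pair $(u,w_j)$ (legitimate since $0\in Z(u)=Z(w_j)$ inside $B_{R_j}$ and $N_u(0,\cdot)<N_0$ at all scales), which gives that $w_j$ has frequency bounded by $D=D(n,N_0)$ on a concentric ball comparable to $B_{R_j}$. The two‑sided boundary Harnack estimate of Corollary~\ref{Two Sides Bounds} — in the form of the Remark after it, which drops the a priori frequency bound on the numerator — then applies to $(u,w_j)$, so for suitable $C=C(n,N_0)$ and a fixed fraction $\kappa=\kappa(n,N_0)\in(0,1)$,
\[
	C^{-1}Q_j \;\le\; \frac{w_j}{u} \;\le\; CQ_j \quad\text{on }B_{\kappa R_j}\setminus Z(u),\qquad Q_j:=\frac{\sup_{B_{R_j/2}}|w_j|}{\sup_{B_{R_j/2}}|u|}.
\]
Letting $x\to0$ (the point $0\in Z(u)$ is a limit of $B_{\kappa R_j}\setminus Z(u)$, as $Z(u)$ has empty interior) and using $w_j/u=2MR_j^{\beta}-f\to 2MR_j^{\beta}-c_0$, the lower bound gives $Q_j\le C(2MR_j^{\beta}-c_0)\le 3CMR_j^{\beta}$ for $j$ large; the upper bound then yields $2MR_j^{\beta}-f\le CQ_j\le 3C^2MR_j^{\beta}$ on $B_{\kappa R_j}$, i.e. $\inf_{B_{\kappa R_j}}f\ge -3C^2MR_j^{\beta}$. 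Since also $\sup_{B_{\kappa R_j}}f\le\sup_{B_{R_j}}f\le MR_j^{\beta}$, we obtain $\osc_{B_{\kappa R_j}}f\le(1+3C^2)MR_j^{\beta}=o\bigl((\kappa R_j)^{\beta_0}\bigr)$ because $\beta<\beta_0$, and the rigidity fact forces $f$ to be constant, i.e. $v=c\,u$.

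For the "in particular" statement, when $Z(u)=Z(v)$ Theorem~\ref{Main Frequency} (used at every scale) bounds $N_v$ at all scales, so Corollary~\ref{Two Sides Bounds} gives $\sup_{B_r}|f|\le C(n,N_0)$ uniformly in $r$; hence $r^{-\beta}\sup_{B_r}f\le Cr^{-\beta}\to0$ and~(\ref{Liouville Growth}) is automatic, so the previous steps give $v=c\,u$, with $c\neq0$ since $v\not\equiv0$ (otherwise $Z(v)=\mb{R}^n\neq Z(u)$). The hard part is the third paragraph: passing from a one‑sided upper bound on $\sup_{B_R}f$ to a matching lower bound on $\inf_{B_R}f$. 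This works only because $w_j=2MR_j^{\beta}u-v$ has the \emph{same} nodal set as $u$ inside $B_{R_j}$, which is what makes Theorem~\ref{Main Frequency} and the boundary Harnack inequality available with constants independent of the scale $R_j$ — and this is exactly where the hypothesis $N_u(0,r)<N_0$ for all $r$, together with the exact dilation invariance of $\Delta$, is used.
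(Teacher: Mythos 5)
Your proposal is correct and follows essentially the same route as the paper: an oscillation-decay estimate at all scales (coming from Theorem~\ref{Holder Continuity}), the auxiliary combination $Mu-v$ sharing the nodal set of $u$ so that Theorem~\ref{Main Frequency} and Corollary~\ref{Two Sides Bounds} give a scale-uniform two-sided Harnack bound, and a choice of $\beta$ small relative to the decay exponent. The only difference is organizational — you package the iteration as a standalone rigidity lemma and then verify its hypothesis along the $\liminf$ scales $R_j$, whereas the paper first proves the scale-uniform one-sided bound $\sup_{B_r}|v/u|\le M_1\sup_{B_{100r}}(v/u)$ and then iterates directly — but the mechanism and the key lemmas invoked are identical.
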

\begin{proof}
    If $Z(u) \neq Z(v)$, we may assume that $(v/u)(0) = 0$. Then, if $\sup_{B_{r}} | v/u | = - \inf_{B_r} v/u$, one can denote $M = \sup_{B_{100r}} v/u$ and consider $M - v/u = (Mu-v) / u$ on $B_{100r}$. Since $Mu-v$ and $u$ have the same zero set in $B_{100r}$, by Theorem \ref{Bound Frequency} and Corollary \ref{Two Sides Bounds},
    there is a constant $C = C(n,N_0) > 1$ such that
        \begin{equation}
            M + \sup_{B_r} \bigg|\frac{v}{u}\bigg| = \sup_{B_r} \frac{Mu - v}{ u } \leq C \cdot \inf_{B_r} \frac{Mu - v}{ u } = C \cdot M - \sup_{B_r} \frac{v}{u} \leq C \cdot M.
        \end{equation}
    Hence, there is always a constant $M_1 = M_1 (n,N_0) > 1$, such that 
        \begin{equation}
            \sup_{B_r} \bigg| \frac{v}{u} \bigg| \leq M_1 \cdot \sup_{B_{100r}} \frac{v}{u}.
        \end{equation}
    By the proof of Theorem \ref{Holder Continuity} and (\ref{Oscillation Decay}), there is a constant $\theta=\theta(n,N_0) \in (0,1)$ such that 
        \begin{equation}\label{Global Decay}
            \sup_{B_{r}} \frac{v}{u} - \inf_{B_{r}} \frac{v}{u} \leq \theta^k \cdot (\sup_{B_{100^k r}} \frac{v}{u} - \inf_{B_{100 ^k r}} \frac{v}{u}) \leq \theta^k \cdot (2M_1) \cdot \sup_{100^{k+1} r} \frac{v}{u}
        \end{equation}
    for all $r > 0$ and $k \in \mb{Z}_+$. By choosing $\beta = \beta(n, N_0) \in (0,1)$ such that $\theta \cdot 100^{\beta} <1$, the statement follows if we let $k \to \infty$ and then $r \to \infty$.

    If $Z(u) = Z(v)$, by Corollary \ref{Two Sides Bounds} and Theorem \ref{Bound Frequency}, we will have that 
        \begin{equation}\label{Global Bound}
            \sup_{B_r} \bigg| \frac{v}{u} \bigg| \leq C \cdot \inf_{B_r} \bigg| \frac{v}{u} \bigg| \leq C \cdot \bigg|\frac{v}{u} \bigg|(0)
        \end{equation}
    for some $C = C(n, N_0)>0$ and all $r >0 $. Denote the right hand side of (\ref{Global Bound}) as $M_2$. By the first inequality of (\ref{Global Decay}), we have that
        \begin{equation}
            \sup_{B_{r}} \frac{v}{u} - \inf_{B_{r}} \frac{v}{u} \leq \theta^k \cdot (\sup_{B_{100^k r}} \frac{v}{u} - \inf_{B_{100 ^k r}} \frac{v}{u}) \leq \theta^k \cdot M_2.
        \end{equation}
The statement follows if we let $k \to \infty$ and then $r \to \infty$. 
We note that the above proof involves only controls of growth of both $u$ and $v$ at infinity. If one uses the fact that the operator is the standard Laplacian then the hypothesis on $u$ implies that $u$ is a harmonic polynomial. If the ratio $v/u$ grows like a power of $r$, then $v$ is also a harmonic polynomial. The conclusions can also be derived directly by working polynomials and simple blow-downs.
\end{proof}

\section{Uniform Bound on Frequency Functions for Solutions with the Same Zero Set}\label{Section Carleson}

In this section, all elliptic operators $\mc{L}, \mc{L}_1, \mc{L}_0$ will satisfy the conditions (\ref{Coefficient 1}) and (\ref{Coefficient 2}). Our main result is the following theorem.

\begin{theorem}\label{Bound Frequency}
    \textit{
    Suppose that $\mc{L}(u) = \mc{L}_1(v) = 0$ in $B_{10}$, and $0 \in Z(v) = Z(u) = Z$. Also assume that $N_u \leq N_0 < \infty$. Then, there is a positive constant $D = D(\Lambda, N_0) < \infty$, such that
        \begin{equation}
            \log \bigg( \frac{\sup_{B_{1}}|v|}{ \sup_{B_{1/2}}|v|} \bigg)  \leq D. 
        \end{equation}
    }
\end{theorem}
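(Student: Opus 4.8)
The plan is to reduce the frequency bound to a Carleson‑type estimate for $v$ on the nodal domains of $u$. By the equivalence of frequency and doubling index ((\ref{Equivalence Frequency 1})–(\ref{Equivalence Frequency 2})), it suffices to bound $\log(\sup_{B_1}|v|/\sup_{B_{1/2}}|v|)$, and this quantity is invariant under rescaling of $v$. Normalize $\sup_{B_8}|u|=1$, so $u\in\mc S_{N_0}(\Lambda)$. Since $Z(v)=Z(u)$, on every nodal domain $\Omega$ of $u$ the function $v$ has a fixed sign, so $|v|$ is a nonnegative $\mc L_1$‑solution in $\Omega\cap B_5$ vanishing continuously on $\partial\Omega\cap B_5$ (here the Lipschitz, hence interior $C^{1,\alpha}$, regularity is what guarantees this boundary behaviour). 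Moreover $\sup_{B_r}|v|=\max_j\sup_{\Omega_j\cap B_r}|v|$ over the at most $T_0=T_0(\Lambda,N_0)$ nodal domains $\Omega_j$ of $u$ meeting $B_r$ (Corollary \ref{Number of Domains}). So everything comes down to controlling $\sup_{\Omega_j\cap B_1}|v|$ by $\sup_{\Omega_j\cap B_{1/2}}|v|$ for each $j$.

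\textbf{The central tool: a Carleson estimate on nodal domains.} I would establish that for $u\in\mc S_{N_0}(\Lambda)$ with $0\in Z(u)$, any nodal domain $\Omega$, any operator $L$ satisfying (\ref{Coefficient 1}) and any nonnegative $L$‑solution $w$ in $\Omega\cap B_5$ vanishing on $\partial\Omega\cap B_5$, one has
$\sup_{\Omega\cap B_r}w\le C(\Lambda,N_0)\,w(A)$
for a corkscrew point $A\in\Omega$ with $\dist(A,Z(u))\ge c_4(\Lambda,N_0)\,r$, uniformly in $r\le 1$. The proof recycles the two geometric facts of Section \ref{Section Corkscrew}. First, the modified Harnack chains of Theorem \ref{Stubborn Ball} let one compare $w$ at two points of $\Omega$ through Harnack inequalities along a chain, with loss controlled by the chain length; for two points at a definite distance from $Z(u)$ the length is $O(\Lambda,N_0)$. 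Second, the complement of $\Omega$ satisfies a corkscrew/measure‑density condition at every point of $\partial\Omega$ — apply Theorem \ref{Stubborn Ball} to a neighbouring nodal domain — so by De Giorgi's boundary estimate $w$ decays Hölder‑continuously to $0$ near $\partial\Omega$. Feeding these into the dyadic iteration of \cite{CFMS81}, \cite{JK82}: the boundary decay forces $\sup_{\Omega\cap B_r}w$ to be attained at a point $x^*$ with $\dist(x^*,Z(u))\ge c(\Lambda,N_0)r$, and then the modified Harnack chain connects $x^*$ to $A$ with bounded loss. Uniformity of $C(\Lambda,N_0)$ over scales $r\le 1$ follows from the monotonicity of the frequency (Theorem \ref{Monotone Frequency}): the rescaled solution $u(r\,\cdot)$ still has doubling index $\le C(\Lambda)N_0$, so the whole class is stable under scaling up.

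\textbf{Conclusion.} Applying the Carleson estimate to $|v|$ on the domain $\Omega_j$ realizing $\sup_{B_1}|v|$ (using a boundary point of $\Omega_j$ in $\overline{B_1}$) gives $\sup_{B_1}|v|\le C|v(A_j)|$ for a corkscrew point $A_j\in\Omega_j\cap B_2$ with $\dist(A_j,Z(u))\ge c_4$. On the other side, $0\in Z(u)$ and, more generally, every nodal domain that genuinely enters $B_{1/2}$ contains — by Theorem \ref{Stubborn Ball} rescaled — a corkscrew point $A_j'\in\Omega_j\cap B_{1/2}$ with $\dist(A_j',Z(u))\gtrsim c_4$. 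Since $A_j$ and $A_j'$ are both deep points of $\Omega_j$, a modified Harnack chain of length $O(\Lambda,N_0)$ joins them, so $|v(A_j)|\le C|v(A_j')|\le C\sup_{B_{1/2}}|v|$; combining, $\sup_{B_1}|v|\le C(\Lambda,N_0)\sup_{B_{1/2}}|v|$, which is the assertion with $D=\log C(\Lambda,N_0)$. (The growth estimate Theorem \ref{Growth Estimate} enters to pin the corkscrew values of $u$ above and below, which is needed when transcribing this into the exact radii of the statement.)

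\textbf{The main obstacle.} The step I expect to be hard is the case in which the domain $\Omega_j$ that realizes $\sup_{B_1}|v|$ meets $B_1$ but is "thin" inside $B_{1/2}$, i.e. has no corkscrew point there: then its large value of $|v|$ cannot be transported into $B_{1/2}$ within $\Omega_j$, and — unlike the equal‑operator case, where Theorem \ref{Local Boundedness} makes $v/u$ bounded across $Z(u)$ — the values of $v$ on distinct nodal domains are not a priori comparable. This is exactly where the rigidity of $Z(u)$ at all scales (forced by $N_u\le N_0$) must be used, and I would close it by a compactness argument over $\mc S_{N_0}(\Lambda)$: if the bound failed along a sequence, normalize the corkscrew values at a fixed scale to $1$ so that the Carleson estimate and Corollary \ref{Number of Domains} make $v_k$ uniformly bounded on a fixed ball, pass to $C^{1,\alpha}$‑limits $u_k\to u_\infty\in\mc S_{N_0}(\Lambda)$ and $v_k\to v_\infty$ with $Z(u_k)\to Z(u_\infty)$, and observe that the failure forces $v_\infty\equiv 0$ by unique continuation (the "thin" maximizing domain must in the limit either open up into $B_{1/2}$, or the Carleson estimate forces $v_\infty$ to vanish on it), contradicting the normalization. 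The delicate point in this compactness step is matching scales so that the normalizing corkscrew points lie in the ball where $v_k$ converges — which is why the uniform, scale‑invariant Carleson estimate is the key input — together with using the uniqueness and bounded degree of the tangent cones of $Z(u)$ at its points to rule out the thin‑domain scenario in the limit.
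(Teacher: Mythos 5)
Your overall strategy — reduce to a Carleson estimate on the nodal domains of $u$ via the corkscrew and modified Harnack chain properties, then close by compactness over $\mc{S}_{N_0}(\Lambda)$ — is exactly the paper's strategy, and your Carleson estimate is Lemma~\ref{Caffarelli's Lemma}. But your main-line argument (before the ``obstacle'') contains a gap that you partly, but not fully, diagnose, and the compactness repair you sketch differs from the paper's in a way that matters.

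The gap: you write ``Since $A_j$ and $A_j'$ are both deep points of $\Omega_j$, a modified Harnack chain of length $O(\Lambda,N_0)$ joins them.'' This is not what Theorem~\ref{Stubborn Ball} provides. That theorem builds a chain \emph{from} a given point \emph{to} some corkscrew point; it does not connect two prescribed deep points, and in general two points of $\Omega_j$ at definite distance from $Z(u)$ need not be joinable by a path staying at a definite distance from $Z(u)$, let alone by a uniformly short Harnack chain. The neck example in Remark~\ref{Neck Region} ($u_\eps = x^2 - y^2 + \eps$) shows precisely that two corkscrew-scale balls in the same nodal domain can be connected only through an arbitrarily thin throat, so the Harnack constant between them is not controlled by $(\Lambda,N_0)$. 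The failure mode is therefore not only ``$\Omega_j$ is thin inside $B_{1/2}$''; even two fat lobes of $\Omega_j$ can fail to be uniformly connected. The whole difficulty of the theorem is that this connectivity constant is not scale-invariant and depends on the particular nodal set.

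The repair: you propose to normalize $v_k$ at corkscrew values, pass to a $C^{1,\alpha}$-limit $v_k\to v_\infty$, and invoke unique continuation. This is where your route diverges from the paper and where I am not convinced it can be made to work as stated. If you normalize $\sup_{B_1}|v_k|=1$ and the doubling blows up, then $\sup_{B_{1/2}}|v_k|\to 0$, and the local limit $v_\infty$ of $v_k$ on compact subsets of $B_1$ is simply $\equiv 0$; this is not a contradiction because the interior $C^{1,\alpha}$-convergence does not transfer the normalization $\sup_{B_1}|v_k|=1$ to the limit (the mass can escape to $\partial B_1$). Normalizing at a fixed corkscrew point has the same problem: to get uniform bounds on $v_k$ on a \emph{fixed} compact set containing that point you already need the Harnack transport whose uniformity is in question. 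The paper avoids this circularity by taking the limit only for $u_n\to u_0$, never for $v_n$: it uses the \emph{fixed} limit nodal set $Z_0=Z(u_0)$ to produce a modulus of connectivity $\mu=\mu(d,Z_0)>0$ inside each nodal domain of $u_0$, covers the connecting paths by a fixed number $Q$ of cubes, and then observes that for $n$ large these paths lie inside single nodal domains of $u_n$ (by the $C^{1,\alpha}$-closeness and Theorem~\ref{Growth Estimate}), so the Harnack inequality for $v_n$ can be applied $Q$ times with a constant independent of $n$. Combined with the Carleson estimate this gives $\sup_{B_{\eta-2\eps}}|v_n|\le C(\Lambda,N_0,Z_0)\sup_{B_{\eta-4\eps}}|v_n|$ for $n$ large, contradicting the blow-up of the doubling via the monotonicity theorem. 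You also miss the careful choice of the radius $\eta$ (pigeonholing over $4T_0$ annuli) which the paper needs to ensure every nodal domain that meets $B_\eta$ actually penetrates a definite amount into $B_\eta$, so that the corkscrew point $x_i$ lies where it should. In short: right skeleton, right lemma, right to see a compactness argument is needed, but the compactness must be applied to $u$ (to extract a uniform path system) rather than to $v$ (to extract a limit function), and the short-Harnack-chain claim between deep points of $\Omega_j$ is false as stated and is the crux of the issue, not a side case.
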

In order to prove this theorem, we first need a Carleson type estimate, which is always a key ingredient for the boundary Harnack principle, see for 
examples \cite{CFMS81}, \cite{JK82} and \cite{LM16}. The proof for this Lemma \ref*{Caffarelli's Lemma} is inspired by \cite{CFMS81}.

\begin{lemma}\label{Caffarelli's Lemma}
    \textit{
    Suppose that $L(u) = 0$ in $B_{10}$, $0 \in Z(u) =  Z$, and $N_u \leq N_0 < \infty$. Assume that $\Omega$ is a nodal domain of $u$ in $B_3$ which satisfies $\Omega \cap B_{1/2} \neq \emptyset$. Then, if $ L_1(v) = 0$ in $\Omega$, $v>0$ in $\Omega$, and $v =0$ on $Z \cap \pa \Omega$, there exist constants $M= M (\lambda,N_0) >0$ and $c = c(\lambda,N_0) >0$, such that the following estimate holds:
    \begin{equation}
        \sup_{B_{1/2} \cap \Omega} v \leq M \sup\limits_{y\in B_{2} \cap \Omega , \ \delta(y) \geq c} v(y).
    \end{equation}
    In particular, if $L_1(v) = 0$ in $B_{10}$ and $Z(v) = Z$, then
    \begin{equation}
        \sup_{B_{1/2}} |v| \leq M \sup\limits_{y\in B_{2}, \ \delta(y) \geq c} |v|(y).
    \end{equation}
    }
\end{lemma}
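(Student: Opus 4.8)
The plan is to adapt the Caffarelli--Fabes--Mortola--Salsa proof of the Carleson estimate, using the corkscrew condition and the modified Harnack chain of Theorem \ref{Stubborn Ball} in place of the NTA structure, and the growth estimate of Theorem \ref{Growth Estimate} to control $u$ (hence the local geometry of $\Omega$) near $Z$. First I would dispose of the ``in particular'' clause. If $L_1(v)=0$ in $B_{10}$ with $Z(v)=Z=Z(u)$, the nodal domains of $v$ are precisely those of $u$, and on each such domain $\Omega$ the function $|v|$ is a positive $L_1$-solution vanishing on $Z\cap\pa\Omega$; since only boundedly many nodal domains of $u$ meet $B_{1/2}$ (Corollary \ref{Number of Domains}), applying the first assertion to each of them and taking the maximum yields the second. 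So it suffices to prove the first assertion. Replacing $u$ by $u/\sup_{B_8}|u|$ (which alters neither $Z$ nor $N_u$) I may assume $\sup_{B_8}|u|=1$ and, relabelling, $u>0$ on $\Omega$; rescaling $v$ I may assume $\sup\{v(y):y\in B_2\cap\Omega,\ \delta(y)\ge c\}=1$, where $c=c(\lambda,N_0)$ is the corkscrew constant to be fixed below, and the goal becomes $\sup_{B_{1/2}\cap\Omega}v\le M$ for some $M=M(\lambda,N_0)$.

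Next the corkscrew bookkeeping. Because the doubling index of $u$ is comparable at all scales $\rho\lesssim 1$ and near every $Q\in Z$, Theorem \ref{Stubborn Ball} rescaled to $B_\rho(Q)$ provides, for each such $Q$ accessible from $\Omega$ and each small $\rho$, a corkscrew point $A_\rho(Q)\in\Omega\cap B_{2\rho}(Q)$ with $\delta(A_\rho(Q))\ge c'\rho$, and by Theorem \ref{Growth Estimate}, $u(A_\rho(Q))\ge A_2(c'\rho)^{N_0}$. Two consequences of the modified Harnack chain will be used repeatedly. (a) If two points of $\Omega$ lie at distance $\ge\tau s$ from $Z$ inside a common ball of radius $s$, they are joined by a modified Harnack chain of length $\le L(\tau,\lambda,N_0)$; since each chain ball $B_{\delta(\cdot)}(\cdot)$ sits inside $\Omega$, the Harnack inequality applied to the positive solution $v$ makes their $v$-values comparable with a uniform constant. (b) In particular $v(x)\le\hat C\,v\bigl(A_{\delta(x)}(Q(x))\bigr)$ for every $x\in\Omega\cap B_1$ with $\delta(x)$ small, where $Q(x)\in Z$ realizes $\delta(x)$. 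The whole difficulty is to prevent $v(A_\rho(Q))$ from growing like a negative power of $\delta(x)$ as $\rho$ decreases from a fixed scale $\sim 1/16$ (where it is $\le 1$) down to $\delta(x)$: chaining through $\sim\log(1/\delta(x))$ Harnack steps only gives such a power, and this is the real obstruction.

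I would then run the CFMS contradiction argument. Suppose $v(x_0)>M$ for some $x_0\in B_{1/2}\cap\Omega$, with $M$ large. A Harnack chain from a fixed unit-scale corkscrew point of $\Omega$ (where $v\le1$) to $x_0$, whose length is $\lesssim 1+\log(1/\delta(x_0))$ by Theorem \ref{Stubborn Ball}, gives $\delta(x_0)\le CM^{-\gamma}$ for some $\gamma=\gamma(\lambda,N_0)>0$. The engine of the proof is a propagation step: whenever $v(x)>M$ there is $x'\in\Omega$ with $|x'-x|\le C_1\delta(x)$ and $v(x')\ge 2v(x)$. Granting this, iteration produces $x_j$ with $v(x_j)\ge 2^j v(x_0)\to\infty$ and (by the same Harnack-chain bound) $\delta(x_j)\le CM^{-\gamma}2^{-j\gamma}$, so $\sum_j|x_{j+1}-x_j|\le C_1\sum_j\delta(x_j)\le\varepsilon(M)$ with $\varepsilon(M)\to0$; hence $x_j\to x_\infty$ with $|x_\infty-x_0|\le\varepsilon(M)$, forcing $x_\infty\in B_1$ and, since $\delta(x_j)\to0$, $x_\infty\in Z\cap\pa\Omega$. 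As $v$ is continuous up to $\pa\Omega$ with $v=0$ on $Z\cap\pa\Omega$, this contradicts $v(x_j)\to\infty$, so $\sup_{B_{1/2}\cap\Omega}v\le M$, and the lemma follows with $c=c'/16$ and $M=M(\lambda,N_0)$.

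The main obstacle is precisely the engine, where the hypothesis that $v(x)$ is anomalously large (larger than $M$, while by (b) it is comparable to the local corkscrew value $v(A_{\delta(x)}(Q(x)))$) must be exploited, as in \cite{CFMS81}. The idea is to compare the corkscrew values $v\bigl(A_{2^k\delta(x)}(Q(x))\bigr)$, $k=0,1,2,\dots$, up to the scale $\sim 1/16$ where this value is $\le 1<M<v(x)$: the total decrease is large, so at some intermediate scale $\rho$ one has $\sup_{\Omega\cap B_{2\rho}(Q(x))}v>2v(x)$, whereupon the maximum principle (using $v=0$ on $Z\cap\pa\Omega$) locates $y\in\overline\Omega\cap\pa B_{2\rho}(Q(x))$ with $v(y)\ge 2v(x)$, and a final application of (a) around the $Z$-point nearest to $y$ produces $x'$ within distance $C_1\delta(x)$ of $x$. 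Converting this into a rigorous argument --- choosing the intermediate scale so that the gain is at least a factor $2$ while the displacement stays $\le C_1\delta(x)$, and bounding the connecting chains uniformly in terms of the ratios of the scales involved --- is the delicate bookkeeping and the genuine technical core; all the ``scale-$\rho$'' invocations of Theorems \ref{Stubborn Ball} and \ref{Growth Estimate} are legitimate thanks to the scale invariance of the doubling estimate (\ref{Doubling}), which keeps all constants dependent only on $\lambda$ and $N_0$.
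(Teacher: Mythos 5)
Your overall architecture matches the paper's: normalize so the sup of $v$ over the corkscrew set $\{y\in B_2\cap\Omega:\delta(y)\geq c\}$ equals $1$, use a modified-Harnack-chain estimate to bound $v(x)$ by a negative power of $\delta(x)$, then run a CFMS iteration that produces points $y_i$ with $v(y_i)$ growing geometrically and $\delta(y_i)$ shrinking geometrically, and derive a contradiction. The telescoping of $\sum_i|y_{i+1}-y_i|$ from $\delta(y_i)\lesssim v(y_i)^{-\beta_2}$ is also exactly the paper's bookkeeping, and the reduction of the ``in particular'' clause via Corollary \ref{Number of Domains} is fine.

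The genuine gap is the ``engine'': the propagation step that whenever $v(y_i)$ is large there is $y_{i+1}$ with $|y_{i+1}-y_i|\lesssim\delta(y_i)$ and $v(y_{i+1})\geq\theta^{-1}v(y_i)$ for a fixed $\theta\in(0,1)$. You explicitly flag this as ``the genuine technical core'' and offer only a sketch, and the sketch does not appear to close. Comparing the corkscrew values $v(A_{2^k\delta(x)}(Q(x)))$ from $k=0$ up to unit scale only shows that they \emph{decrease} from roughly $v(x)>M$ down to $\leq 1$; since adjacent scales are comparable by the Harnack inequality, this yields precisely the upper bound $v(x)\lesssim\delta(x)^{-\alpha_1}$ (the paper's (\ref{Control Boundary Distance})), not a point where $v$ exceeds $2v(x)$. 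The ``total decrease is large'' observation works in the wrong direction for producing a gain.

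The missing idea, which makes the paper's propagation step one line, is to extend $v$ by zero outside $\Omega$ and treat it as a nonnegative $L_1$-\emph{subsolution} on the full ball $B_{3\delta(y_i)}(y_i)$. The two-sided Corkscrew Condition of Theorem \ref{Stubborn Ball} guarantees that $B_{3\delta(y_i)}(y_i)\setminus\Omega$ contains a ball of radius $r\,\delta(y_i)$ with $r=r(\lambda,N_0)>0$, i.e.\ the extended $v$ vanishes on a fixed positive-measure fraction of $B_{3\delta(y_i)}(y_i)$. The weak Harnack inequality for nonnegative subsolutions vanishing on a set of measure $\geq\epsilon|B|$ then gives $\sup_{B_{(3/2)\delta(y_i)}(y_i)}v\leq\theta\,\sup_{B_{3\delta(y_i)}(y_i)}v$ for some $\theta=\theta(\lambda,\epsilon)\in(0,1)$, and since $y_i$ lies in the inner ball, $v(y_i)\leq\theta\sup_{B_{3\delta(y_i)}(y_i)}v$; hence some $y_{i+1}\in B_{3\delta(y_i)}(y_i)\cap\Omega$ satisfies $v(y_{i+1})\geq\theta^{-1}v(y_i)$ and $|y_{i+1}-y_i|\leq 3\delta(y_i)$. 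Once this is in place, your iteration and convergence argument go through as written and agree with the paper's. Without it, the proposal is incomplete at exactly the step it itself identifies as essential.
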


\begin{proof}
    Take $c= c_4/2$ for the $c_4$ in the Corkscrew Condition of Theorem \ref{Stubborn Ball}. Assume that $\sup\{|v(y)|\ | \ y \in B_{2} \cap \Omega, 
\delta(y) \geq c \} = 1$, then we shall prove that $\sup\{|v(y)|\ | \ y \in B_{1/4} \cap \Omega \} \leq M$ for some $M = M(\lambda,N_0)$.
    
    First, we claim that for any $x \in B_{1} \cap \Omega$, there are $\alpha_1(\lambda, N_0) >0$, $\alpha_2(\lambda, N_0) >0$, such that 
        \begin{equation}\label{Control Boundary Distance}
            v(x) \leq \alpha_2 \cdot \delta(x)^{-\alpha_1}.
        \end{equation}
    This claim follows from (iv) of Theorem \ref{Stubborn Ball} by going backwards. Indeed, since the length of the modified Harnack chain associated 
to $x$ is bounded by $-\xi_1\log(\delta(x))+\xi_2$ and $\delta(x_m) \geq c_4$, if we apply the usual Harnack inequality along this modified Harnack 
Chain, we will get
        \begin{equation}
            v(x) \leq h^m \cdot v(x_m) \leq h^{-\xi_1\log(\delta(x))+\xi_2} \cdot 1
        \end{equation}
    for some $h = h(\lambda,\theta_0) = h(\lambda,N_0) >1$ which is the constant in the Harnack inequality for this class of elliptic operators.
    
    Next, we need the following standard elliptic estimate for subsolutions: If $L(w) \geq 
0$ in $B_2$, $w \geq 0$ 
in $B_2$, and $|\{x\in B_2 \ | \ w(x)=0 \}| \geq \epsilon >0$, then $\sup_{B_1} w \leq \theta \cdot \sup_{B_2} w$ for some $\theta = \theta(\lambda,\epsilon) \in (0,1)$.

    We follow now the same type arguments as in \cite{CFMS81}. Assume that for some $y_0 \in 
B_{1/2} \cap \Omega$ and $|v(y_0)| = M_0>1$, then one has $\delta(y_0) < c $. Consider the ball $B_{ 3 \delta(y_0)} (y_0)$, on which $v$ may be 
regarded as a nonnegative subsolution if we extend $v$ to be $0$ out of $\Omega$. By the Corkscrew Condition of Theorem \ref{Stubborn Ball}, $B_{3\delta(y_0)}(y_0) \backslash \Omega$ contains a ball of radius $\delta(y_0) \cdot r$ with some small $r = r(\lambda,N_0)>0$.
    Hence, by the above estimate for nonnegative subsolutions, there is an $y_1 \in B_{3 \delta(y_0)}(y_0) \cap \Omega$, such that 
$v(y_1) \geq 
\theta^{-1} v(y_0) = \theta^{-1} M_0$ for a $\theta = \theta(\lambda,r) = \theta(\lambda , N_0) 
\in (0,1)$. Consequently, $\delta(y_1) < c$ so long as $y_1$ is also in $B_2$.

    We can continue this process to find  $y_2, y_3 , \dots$ so long as they all stay inside $B_2$. 
Let us estimate $|y_0 - y_i|$ for $i \geq 0$. Note that our construction gives that $|y_i - 
y_{i+1}| \leq 3 \delta(y_i)$. By (\ref{Control Boundary Distance}), if $y_i \in B_1 \cap \Omega$, then
        \begin{equation}
            \delta(y_i) \leq \beta_1 \cdot v(y_i) ^{-\beta_2} \leq \beta_1\cdot \theta^{\beta_2 i} \cdot v(y_0) ^{-\beta_2} = \beta_1 \cdot \theta^{\beta_2 i} \cdot M_0 ^{-\beta_2}.
        \end{equation}
    for some $\beta_1 = \beta_1(\lambda,N_0)>0$ and $\beta_2 = \beta_2(\lambda,N_0)>0$. Since 
$\theta <1$, the last right hand side terms form a convergent geometric series, and we can sum 
all of them up for $i = 1,2, \dots$. 
    
    If $M_0^{\beta_2} \geq 30\beta_1/(1-\theta^{\beta_2})$, then $|y_0 - y_i| \leq 1/10$ for all $i \geq 0$, and then all $y_i$ will stay in $B_1 \cap \Omega$. This is a contradiction since $v(y_i) \geq \theta^{-i}M_0 \to \infty$ as $i \to \infty$.
\end{proof}

We can now proceed with the proof of Theorem \ref{Bound Frequency}. The 
strategy is quite simple. The 
first step is to use Lemma \ref{Caffarelli's Lemma} to push the point where the solution 
$v$ taking approximate maximum values away from the nodal set. Next, we shall apply the Harnack 
inequality along paths fully contained in a nodal domain of $v$ (or equivalently, a nodal domain of $u$), which 
connects points in a larger ball far away from the zero set to points where $v$ reaches approximate
maximums inside a smaller ball. The difficulty is to avoid neck-like tiny regions in the process of 
connecting these points so that it can be done in a quantitatively controlled  manner.

\begin{proof}[Proof of Theorem \ref{Bound Frequency}]
    We need to consider the following family:
        \begin{equation}
            \mc{S}_{N_0} (\Lambda) \equiv\{ w \ | \ \mc{L}(w) = 0 \ \mr{in} \ B_{10},\ \mc{L} \ \mr{satisfies} \ (\ref{Coefficient 1}),(\ref{Coefficient 2}) , \ N_w \leq N_0,\  \sup_{B_8}|w| = 1\},
        \end{equation}
    which is a compact family in local $C^{1,\alpha}$-metric.
    
    We can then prove the statement by contradiction. If the theorem fails, assume that $\{u_n\} 
\subset \mc{S}_{N_0}(\Lambda)$ with $\sup_{B_8}|u_n| = 1$, $0 \in Z(u_n) = Z_n$. And $v_n$ 
satisfies that $\mc{L}_n (v_n) = 0$ in $B_{10}$, $Z(v_n)= Z_n$, with
        \begin{equation}\label{Blow Up Frequency}
            \log \bigg( \frac{\sup_{B_{1}}|v_n|}{ \sup_{B_{1/2}}|v_n|} \bigg)  \to \infty. 
        \end{equation}
    By compactness of the class $\mc{S}_{N_0}(\Lambda)$, we can assume that $u_n \to u_0 \in 
\mc{S}_{N_0}(\Lambda)$. Note that $0 \in Z(u_0)$ since the convergence is in local $C^{1,\alpha}$ metric.

    Let $Z_0 = Z(u_0)$, we make a partition of $B_2\backslash Z_0$ in terms of nodal domains. 
Let us assume that $B_2 
\backslash Z_0 = \sqcup_{i=1} ^{T}  (\Omega_i \cap B_2)$, where $\Omega_i$, $i = 1, \dots, T $ are disjoint nodal domains of $u_0$ in $B_3$ such that $\Omega_i \cap B_2 \neq \emptyset$.  Note that $T \leq T_0  = T_0(\lambda,N_0)$ by Corollary \ref{Number of Domains}.
    If we divide $[3/2,2)$ into $[3/2 + (j-1)/(4T_0), 3/2 + j/ (4T_0))$, $j = 1, \dots, 4T_0$, there will exist a $j = j(Z_0)$, such that for each $\Omega_i$, if $\Omega_i \cap B_{3/2 + j / (4T_0)} \neq \emptyset$, then $\Omega_i \cap B_{3/2+(j-1)/4T_0} \neq \emptyset$. We denote the subset of subindices of these $\Omega_i$ as $I_0$. Hence, we can set $\eta = 3/2 + (j-1)/(4T_0) + 1/(8T_0) \in (3/2 , 2)$ and $\epsilon = 1/(100 T_0) \ll 1$.
    We focus on the ball $B_\eta$. The point here is that those $\Omega_i$ with $i\in I_0$ are path-connected, form a partition of $B_\eta$ and also 
have a nonempty intersection with $B_{\eta - 10 \epsilon}$.

    By using Lemma \ref{Caffarelli's Lemma} to push maximum points away from zero sets locally, one can show that, for some positive $M = 
M(\lambda,N_0)$ and $c(\lambda, N_0)$, we can get
    \begin{equation}\label{Expel Points Outward}
        \sup_{B_{\eta- 2\epsilon}} |v_n| \leq M \sup_{y \in B_{\eta- \epsilon} ,\ \delta_n(y) \geq c} |v_n|(y),
    \end{equation}
    where $\delta_n(y) = \dist(y,Z_n)$. Assume the maximal value of the right hand side of (\ref{Expel Points Outward}) is achieved by $y_n$. Note that when $n$ is large enough, $\{y \in B_{\eta- \epsilon} \ | \ \delta_n(y) \geq c\}$ will be contained in $\{y \in B_{\eta- \epsilon} \ | \ \delta_0(y) = \dist(y, Z_0) \geq c/2\}$. Hence, we can assume that $\delta_0(y_n) \geq c/2$.

    Because for each $i \in I_0$, $\Omega_i \cap B_{\eta - 10 \epsilon} \neq \emptyset$, by the Corkscrew Condition of Theorem \ref{Stubborn Ball}, 
there is a small ball of radius $r = r(\lambda, N_0)>0$ with center $x_i$ inside $\Omega_i \cap B_{\eta - 4 \epsilon}$. Let $d = \min\{c/4 , r/2\}$, 
there is a constant $\mu = \mu(d,Z_0)>0$ such that for any two points $x,y$ in $\Omega_{i}(d) \equiv \Omega_i \cap \{y \in B_{3-d} \ | \ \delta_0(y) \geq d\}$, 
    $x,y$ are connected by a path $\gamma$, which is fully contained in $\Omega_i(\mu) \equiv \Omega_i \cap \{y \in B_{3-\mu} \ | \ \delta_0(y) \geq 
\mu \}$. The existence of such a $\mu$ and the Harnack inequality lead to the desired conclusion. In fact, if we use dyadic cubes of side length 
$\mu/10$ to cover $B_{10}$, those cubes which intersect with $\Omega_i(\mu)$ are fully contained in $\Omega_i(\mu /2)$, and the number of cubes are 
bounded by $Q \equiv C(n)\mu^{-n}$ (here $n$ is the dimension and not to be confused wth the subindices).

    Hence, when the subindices $n$ of $u_n$ is large enough, each $\Omega_i(\mu/2)$ will be fully contained in a single nodal domain of $u_n$ by Theorem 
\ref{Growth Estimate}. Since $\delta_0(y_n) \geq c/2$, $y_n$ is contained in a $\Omega_i(d)$ for an $i \in I$. $y_n$ and $x_i$ are then connected by a 
path $\gamma_{n,i}$ fully contained in $\Omega_i(\mu)$, which is covered by $Q$ cubes with side length $\mu/10$. We can then apply the Harnack inequality $Q$ times along $\gamma_{n,i}$, and get
    \begin{equation}\label{Pull 2}
        |v_n|(y_n) \leq h^{Q} |v_n|(x_i) \leq h^{Q} \sup_{B_{\eta - 4 \epsilon}}|v_n|
    \end{equation}
for some $h = h(\lambda)>1$.

    Combine (\ref{Expel Points Outward}) and (\ref{Pull 2}), we get that
        \begin{equation}
            \sup_{\eta-2\epsilon}|v_n| \leq M \cdot h^Q \sup_{\eta - 4\epsilon} |v_n|,
        \end{equation}
    which contradicts to (\ref{Blow Up Frequency}) by Theorem \ref{Monotone Frequency}.

\end{proof}

\section{Analysis on a Single Nodal Domain}\label{Section Single}

In this section, we will fix a single domain and discuss properties of solutions on this domain. 
More precisely,  let $L_0(u_0) = 0$ in $B_{10}$, $0 \in Z(u_0) = Z_0$ and $N_{u_0} \leq N_0 < \infty$, we consider a nodal domain 
$\Omega$ of $u_0$ in $B_5$ with $0 \in \pa \Omega$.
And we use the notation $\delta(x) \equiv \dist(x,Z_0) = \dist(x,\pa \Omega)$ for $x \in \Omega$.

\subsection{Boundary Harnack Inequality on a Given Nodal Domain}

\begin{theorem}\label{Boundary Harnack in Single Domain}
    \textit{
    Suppose that $L(u) = L(v) = 0$ in $\Omega$, $u> 0$ on $\Omega$, and $u=v=0$ on $\pa \Omega \cap B_3$ continuously. Then, there are positive constants $M = M(\lambda,N_0)$ and $r = r(\lambda,N_0)$, such that 
        \begin{equation}\label{Growth Control Single 1}
            \bigg| \frac{v}{u} \bigg|\leq M \cdot \frac{\sup_{B_1 \cap \Omega} |v|}{\inf_{ y \in B_2 \cap \Omega, \ \delta(y) \geq r} u(y)},
        \end{equation}
    on $B_{1/4} \cap \Omega$.
    In particular, if $v>0$ on $\Omega$, and
        \begin{equation}
            0 < C_1 \leq v, u \leq C_2
        \end{equation}
    on $\{x \in B_2 \cap \Omega \ | \ \delta(x) \geq r\}$, then
        \begin{equation}\label{Growth Control Single 3}
            \frac{C_1}{C_2} \cdot M^{-2} \leq \frac{v}{u} \leq \frac{C_2}{C_1} \cdot M^2
        \end{equation}
    on $ B_{1/4} \cap \Omega.$
    }
\end{theorem}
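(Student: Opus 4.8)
The plan is to mimic the proof of Lemma~\ref{Caffarelli's Lemma}, but now working with two solutions of the \emph{same} (arbitrary bounded-measurable-coefficient) operator $L$ on the single domain $\Omega$, using the geometric input of Theorem~\ref{Stubborn Ball} that applies because $u_0$ (the defining function of $\Omega$) lies in the doubling class. First I would normalize so that the denominator on the right of \eqref{Growth Control Single 1} equals $1$, i.e.\ assume $\inf\{u(y): y\in B_2\cap\Omega,\ \delta(y)\ge r\}=1$ where $r:=c_4/2$ with $c_4=c_4(\lambda,N_0)$ from the Corkscrew Condition. The key structural facts I would record first are: (a) by the Modified Harnack Chain in Theorem~\ref{Stubborn Ball}, any $x\in B_1\cap\Omega$ is joined to a point $x_m$ with $\delta(x_m)>c_4$ by a chain of length $m\le -\xi_1\log\delta(x)+\xi_2$ along which consecutive balls overlap definitely; applying the interior Harnack inequality for $L$ along this chain gives the two-sided polynomial-in-$\delta(x)$ bound
\begin{equation}\label{prop-chain-bound}
    \alpha_2\,\delta(x)^{\alpha_1}\cdot \sup_{B_2\cap\Omega}|v| \ \le\ |v(x)|\ \le\ \alpha_2^{-1}\,\delta(x)^{-\alpha_1}\cdot \sup_{B_2\cap\Omega}|v|
\end{equation}
and the analogous lower bound $u(x)\ge \alpha_2\,\delta(x)^{\alpha_1}$ (using $u>0$ and that $x_m$ can be taken in $B_2\cap\Omega$ with $\delta(x_m)\ge c_4> r$, so $u(x_m)\ge 1$ after normalization); and (b) the Carleson-type bound from Lemma~\ref{Caffarelli's Lemma} applied to the nonnegative function $|v|$ (extended by $0$ outside $\Omega$, hence a subsolution), pushing the sup of $|v|$ over $B_{1/2}\cap\Omega$ to values of $|v|$ at points of $B_2\cap\Omega$ with $\delta\ge c$ — but here I actually need $u$, not $|v|$, bounded below, so the comparison must be set up carefully.

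The heart of the argument is a comparison/"barrier'' step: to prove \eqref{Growth Control Single 1} it suffices to show that $|v|\le C(\lambda,N_0)\,u$ on $B_{1/4}\cap\Omega$ under the normalization $\inf\{u:\delta\ge r,\ B_2\cap\Omega\}=1$ and $\sup_{B_1\cap\Omega}|v|=1$ (rescale the general case). I would argue by contradiction: suppose there is $y_0\in B_{1/4}\cap\Omega$ with $|v(y_0)|=M_0\,u(y_0)$ for a large $M_0$. Using \eqref{prop-chain-bound} and its $u$-analogue, $u(y_0)\ge \alpha_2\delta(y_0)^{\alpha_1}$ while $|v(y_0)|\le \alpha_2^{-1}\delta(y_0)^{-\alpha_1}$, so $\delta(y_0)$ must be small, comparable to a negative power of $M_0$; in particular $\delta(y_0)<c$. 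Now run the Caffarelli-style iteration from Lemma~\ref{Caffarelli's Lemma}, but applied to the ratio. On the ball $B_{3\delta(y_0)}(y_0)$, $|v|$ is a nonnegative subsolution and, by the Corkscrew Condition, $B_{3\delta(y_0)}(y_0)\setminus\Omega$ contains a ball of radius $\sim\delta(y_0)$; by the subsolution-with-zero-set estimate recalled inside the proof of Lemma~\ref{Caffarelli's Lemma}, there is $y_1\in B_{3\delta(y_0)}(y_0)\cap\Omega$ with $|v(y_1)|\ge\theta^{-1}|v(y_0)|$. Meanwhile, by the interior Harnack inequality for $u$ on $B_{3\delta(y_0)}(y_0)$ (valid since $\delta(y_0)$ is the distance to $Z_0=\partial\Omega$ and $u>0$ there, up to shrinking the ball by a fixed factor), $u(y_1)\le h\,u(y_0)$. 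Hence $|v(y_1)|/u(y_1)\ge (\theta h)^{-1}\,|v(y_0)|/u(y_0)$, i.e.\ the ratio grows geometrically. Iterate to produce $y_1,y_2,\dots$; using \eqref{prop-chain-bound} to bound $\delta(y_i)$ by $\theta^{\beta i}$ times a negative power of $M_0$, the sum $\sum 3\delta(y_i)$ is a convergent geometric series of size $\lesssim M_0^{-\beta}$, so for $M_0$ large all $y_i$ stay in $B_{1/2}\cap\Omega$ — but then $|v(y_i)|\ge(\theta h)^{-i}(\dots)\to\infty$ contradicts $\sup_{B_1\cap\Omega}|v|=1$. This gives $M_0\le M(\lambda,N_0)$, proving \eqref{Growth Control Single 1}. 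The "in particular'' statement \eqref{Growth Control Single 3} is then immediate: apply \eqref{Growth Control Single 1} to the pair $(u,v)$ and to the pair $(v,u)$ (both legitimate since $u,v>0$ on $\Omega$ and vanish on $\partial\Omega\cap B_3$), using $C_1\le u,v\le C_2$ on $\{\delta\ge r\}\cap B_2$ to bound $\sup_{B_1\cap\Omega}|v|\le\sup_{B_2\cap\Omega}|v|$ from above by a Carleson-type argument... — more simply, \eqref{Growth Control Single 1} gives $v/u\le M\,\frac{\sup_{B_1\cap\Omega}v}{\inf_{\delta\ge r}u}$, and applying Lemma~\ref{Caffarelli's Lemma} to $v$ bounds $\sup_{B_1\cap\Omega}v\le \sup_{B_{1/2}\cap\Omega}v\cdot(\text{const})$... ; rather than chase constants I would just note that $\sup_{B_1\cap\Omega}v\le M\cdot C_2$ (by the Carleson estimate and $v\le C_2$ on $\{\delta\ge r\}$) and $\inf_{\delta\ge r}u\ge C_1$, yielding $v/u\le (C_2/C_1)M^2$, and symmetrically $v/u\ge (C_1/C_2)M^{-2}$.

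The main obstacle I anticipate is the \emph{lower} bound $u(x)\gtrsim\delta(x)^{\alpha_1}$ on $B_{1/4}\cap\Omega$: unlike in Lemma~\ref{Caffarelli's Lemma}, where $u$ is a global solution with controlled frequency and Theorem~\ref{Growth Estimate} directly applies, here $u$ is only a solution \emph{on $\Omega$} of a merely-bounded-measurable-coefficient operator, vanishing on $\partial\Omega\cap B_3$, so I cannot quote Theorem~\ref{Growth Estimate} for $u$ itself. The fix is that the \emph{geometry} of $\Omega$ — the Modified Harnack Chain and Corkscrew conditions — is still governed by the fixed doubling function $u_0$, and these geometric facts are all I need: I run the modified Harnack chain for the \emph{domain} $\Omega$, and apply the interior Harnack inequality for the operator $L$ to the positive solution $u$ along that chain to get $u(x)\ge h^{-m}u(x_m)\ge h^{-m}\cdot\inf_{\delta\ge r}u \ge h^{\xi_1\log\delta(x)-\xi_2}$, i.e.\ the desired power bound with $\alpha_1=\xi_1\log h$. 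A secondary technical point is verifying that $B_{3\delta(y_0)}(y_0)\setminus\Omega$ genuinely contains a ball of radius $\gtrsim\delta(y_0)$: this is exactly the scaled Corkscrew Condition of Theorem~\ref{Stubborn Ball} applied at the boundary point realizing $\delta(y_0)$, after rescaling $B_{3\delta(y_0)}(y_0)$ to unit size — one must check the frequency/doubling hypotheses on $u_0$ survive the rescaling, which they do because $u_0\in\mc{S}_{N_0}(\Lambda)$ is a scaling-invariant class (with respect to zooming in).
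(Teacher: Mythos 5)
Your proposal takes a genuinely different route from the paper. The paper proves Theorem \ref{Boundary Harnack in Single Domain} via Lemma \ref{Iteration Decay}, a De Silva--Savin type two-quantity iteration: one sets $w = Cu - v$ and, at dyadic scales $2^{-t}$, simultaneously propagates a lower bound for $w$ on the corkscrew region $A_{2^{-t}}$ (via the modified Harnack chain applied to the positive solution $w+1$) and an upper bound for $w^{-}$ on $K_{2^{-t}}$ (via the subsolution-with-zero-set estimate), concluding $w\geq 0$ pointwise by letting $t\to\infty$ and varying the center. Your proposal instead tries to run a CFMS-style iteration directly on the ratio $|v|/u$, producing a chain $y_0,y_1,\dots$ along which $|v|$ grows geometrically and arguing the chain stays in $B_{1/2}$.

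As written, the proposal has genuine gaps that the paper's scheme is specifically designed to avoid.

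\textbf{1. The $u$-comparison $u(y_1)\le h\,u(y_0)$ is not an interior Harnack estimate.} The point $y_1$ is a near-maximum of the subsolution $|v|$ on $B_{3\delta(y_0)}(y_0)$, and this ball is \emph{not} contained in $\Omega$ (indeed $\delta(y_0)=\dist(y_0,\partial\Omega)$, so $B_{3\delta(y_0)}(y_0)$ necessarily meets $\partial\Omega$). In particular $y_1$ can sit arbitrarily close to $\partial\Omega$. Interior Harnack for $u$ on a ball crossing $\partial\Omega$ is not available; what one would need is a rescaled Carleson estimate for $u$ at scale $\delta(y_0)$. Even granting that, the resulting ratio-growth factor becomes $(\theta\, C_{\mathrm{Carl}})^{-1}$ with two constants both depending on $\lambda,N_0$, and there is no reason this should exceed $1$ — so the ratio $|v(y_i)|/u(y_i)$ need not grow geometrically.

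\textbf{2. The contradiction at the end fails when $|v(y_0)|$ is small.} Your argument uses $\delta(y_i)\lesssim \theta^{i/\alpha_1}|v(y_0)|^{-1/\alpha_1}$ (from the upper chain bound) to show $\sum 3\delta(y_i)<1/4$, and $|v(y_i)|\geq\theta^{-i}|v(y_0)|\to\infty$ to contradict $\sup_{B_1\cap\Omega}|v|=1$. Both steps require a lower bound on $|v(y_0)|$. But the contradiction hypothesis $|v(y_0)|/u(y_0)>M_0$ gives \emph{no} lower bound on $|v(y_0)|$: both $u(y_0)$ and $|v(y_0)|$ can be as small as one likes as $\delta(y_0)\to 0$, which is exactly the boundary regime the theorem is about. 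Quantitatively: from your $u(y_0)\gtrsim\delta(y_0)^{\alpha}$ and $|v(y_0)|\lesssim\delta(y_0)^{-\alpha}$ one gets only $\delta(y_0)\lesssim M_0^{-1/(2\alpha)}$, an \emph{upper} bound with no lower counterpart, hence $|v(y_0)|$ may tend to $0$. Said differently, the CFMS-type iteration as you set it up proves the Carleson estimate (Lemma \ref{Caffarelli's Lemma}, a statement about $\sup|v|$), not the boundary Harnack bound on the ratio. The extra ingredient — that $u$ controls $v$ \emph{at every scale all the way to the boundary} — is precisely what the paper supplies by iterating Lemma \ref{Iteration Decay} over dyadic scales $K_{2^{-t}}$, $A_{2^{-t}}$ and varying the center.

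\textbf{3. The upper chain bound $|v(x)|\lesssim\delta(x)^{-\alpha_1}$ requires $v>0$.} This bound (your \eqref{prop-chain-bound}) is obtained from the Harnack inequality applied along the modified Harnack chain, which needs $v$ to be a positive solution on the chain balls; it is not available for the nonnegative subsolution $|v|$ of a sign-changing $v$. Theorem \ref{Boundary Harnack in Single Domain} part \eqref{Growth Control Single 1} allows sign-changing $v$. The paper sidesteps this by working with $w=Cu-v$, normalizing $w\geq -1$ on $K_1$, and applying the Harnack chain to the positive solution $w+1$; your scheme has no analogous fix.

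The obstacle you do flag (the lower power bound $u(x)\gtrsim\delta(x)^{\alpha_1}$ coming from the domain's chain geometry rather than Theorem \ref{Growth Estimate}) is real but is the \emph{correctly resolved} part; it is in fact the same device the paper uses in the first half of Lemma \ref{Iteration Decay}. The unaddressed gaps (1)--(3) are where the argument breaks down.
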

We prove this theorem with cubes in the place of balls for conveniences. 
We consider cubes $Q_s$ with center $0$ 
and side-length $2s$, and 
we denote $K_s \equiv \Omega \cap Q_s$, $A_s \equiv \{ x\in K_s \ | \ 
\delta(x) \geq \delta \cdot s\}$, where $\delta = \delta(\lambda,N_0) \ll 
1$ will be chosen in the following lemma. The argument is inspired by 
\cite{DS20} for NTA domains. 

\begin{lemma}\label{Iteration Decay}
    \textit{
    There exist $M_0 = M_0(\lambda,N_0)>0$ and $\delta = \delta(\lambda,N_0)>0$, such that if $w$ is a solution to $L(w) =0$ in $K_1$, possibly change sign, which vanishes on $\pa \Omega \cap B_1$, and $w \geq M_0$ on $A_1$, $w \geq -1$ on $K_1$, then we will have that $w \geq M_0 \cdot a$ on $A_{1/2}$, $w \geq -a$ on $K_{1/2}$ for some small $a = a(\lambda,N_0)>0$.
    }
\end{lemma}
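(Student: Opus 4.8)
The plan is to run, in this nodal-domain geometry, the oscillation step of \cite{DS20}, using the corkscrew condition and the modified Harnack chains of Theorem \ref{Stubborn Ball} in place of the NTA structure, together with the smallness of thin tubes around $Z_0$. There are two essentially independent pieces: a control of the negative part of $w$ on $K_{1/2}$, and a positive lower bound on $A_{1/2}$.

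For the negative part, set $w^-=\max(-w,0)$ and extend it by zero to $Q_1\setminus\Omega$; since $w$ vanishes continuously on $\partial\Omega\cap B_1$, this is a nonnegative subsolution of $L$ in the interior of $Q_1$, bounded by $1$ on $K_1$, and, because $w\ge M_0>0$ on $A_1$, supported in the thin tube $\{x\in\Omega:\,0<\delta(x)<\delta\}$. By the Minkowski--content estimate for nodal sets recalled in Section \ref{Section PRE} ((\ref{Measure Zero Set}) and the remark following it, see \cite{CNV15,NV17}), $|\{w^-\neq0\}\cap Q_{7/8}|\le C(\lambda,N_0)\,\delta$. Hence the De~Giorgi local boundedness estimate for subsolutions (see \cite{HL11}) gives
\[
  \sup_{K_{3/4}}w^-\ \le\ C(\lambda)\,\fint_{Q_{7/8}}w^-\ \le\ C(\lambda,N_0)\,\delta\ =:\ \tau(\delta),
\]
i.e. $w\ge-\tau(\delta)$ on $K_{3/4}$ (indeed on $K_{7/8}$, after a harmless shrinking of the cube), with $\tau(\delta)\to0$ as $\delta\to0$; in particular $w+\tau$ is a nonnegative solution of $L$ in $\Omega\cap Q_{7/8}$.

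For the lower bound on $A_{1/2}$, note that the part of $A_{1/2}$ with $\delta(x)\ge\delta$ lies in $A_1\cap Q_{1/2}$, where $w\ge M_0$; so it remains to handle the collar $\{\delta/2\le\delta(x)<\delta\}\cap\Omega\cap Q_{1/2}$. Given such an $x$, apply Theorem \ref{Stubborn Ball} to the defining solution $u_0$ of $\Omega$ (rescaled so that the corkscrew point it produces sits inside $Q_{3/4}$) to obtain a modified Harnack chain $x=x_0,\dots,x_m\subset\Omega\cap Q_{3/4}$ with $\delta(x_m)>c_4>\delta$, hence $x_m\in A_1$, and with $m\le m_0:=\xi_1\log(2/\delta)+\xi_2$. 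Running the Harnack inequality for the nonnegative solution $w+\tau$ along this chain yields $w(x)+\tau\ge h^{-m_0}(w(x_m)+\tau)\ge h^{-m_0}M_0=:bM_0$, i.e. $w\ge bM_0-\tau$ on the collar, with $b=b(\lambda,N_0,\delta)=h^{-m_0}>0$ and $h$ the Harnack constant of $L$. Putting the three estimates together ($w\ge M_0$ on $A_1\cap Q_{1/2}$, $w\ge bM_0-\tau$ on the collar, $w\ge-\tau$ on $K_{1/2}$) and choosing $\delta=\delta(\lambda,N_0)$ small so that $\tau(\delta)<b(\delta)$, then $a:=\tau(\delta)$ and $M_0=M_0(\lambda,N_0)$ large enough that $bM_0-\tau\ge M_0a$, gives $w\ge-a$ on $K_{1/2}$ and $w\ge M_0a$ on $A_{1/2}$, which is the assertion.

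The step I expect to be the main obstacle is this last reconciliation of rates. For a genuinely twisted nodal domain, positivity can be carried from $A_1$ to the collar only through the modified Harnack chains of Theorem \ref{Stubborn Ball}, whose length is of order $\log(1/\delta)$, so the gain $b=h^{-m_0}$ is only a fixed power of $\delta$, and one must ensure it still dominates the contraction rate $\tau\approx C\delta$ coming from the De~Giorgi estimate. If the crude chain bound is insufficient, the natural remedy is to sharpen the collar estimate by comparing $w$ directly with $u_0$ and invoking the growth estimate of Theorem \ref{Growth Estimate}, which traps $u_0$ between fixed powers of $\delta(\cdot)$ on $\Omega\cap B_2$, instead of losing a whole factor $h^{m_0}$. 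The negative-part step, on the other hand, is routine once the Minkowski--content bound for $Z_0$ is invoked, and the subsequent iteration at dyadic scales (rescaling $w\mapsto w(\cdot/2)/a$, which preserves all hypotheses by the frequency monotonicity of $u_0$, Theorem \ref{Monotone Frequency}) is then standard.
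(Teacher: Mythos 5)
Your split into ``control the negative part on $K_{1/2}$'' plus ``propagate positivity from $A_1$ into the collar'' is the right skeleton, and your collar argument via the modified Harnack chain of Theorem \ref{Stubborn Ball} matches the paper essentially verbatim. The gap is exactly where you flagged it, but the remedy you sketch does not fix it, and the paper circumvents it on the other side.

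The problem is the rate at which your negative-part bound decays. The Minkowski--content plus De~Giorgi route gives at best
\[
\sup_{K_{3/4}}w^-\ \le\ C(\lambda)\Big(\,|\{0<\delta(\cdot)<\delta\}\cap Q_{7/8}|\,\Big)^{1/p}\ \le\ C(\Lambda,N_0)\,\delta^{1/p}
\]
for some $p\ge 1$, so $\tau(\delta)$ is \emph{polynomial} in $\delta$. On the other side, the gain along a modified Harnack chain of length $m_0\sim \xi_1\log(1/\delta)$ is $b(\delta)=h^{-m_0}\sim\delta^{\xi_1\log h}$, also a power of $\delta$. You then need $\tau(\delta)<b(\delta)$, i.e.\ $\delta^{1/p}\lesssim\delta^{\xi_1\log h}$ for small $\delta$, which fails as soon as $\xi_1\log h>1/p$. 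Here $\xi_1\sim N_0/\log C_3$ with $C_3=C_3(\lambda,N_0)>1$ typically close to $1$, and $h>1$ is the Harnack constant, so there is no reason for $\xi_1\log h$ to be small; the required comparison cannot be guaranteed. Your proposed remedy (sharpening the collar estimate via Theorem \ref{Growth Estimate}) attacks the wrong side: Theorem \ref{Growth Estimate} controls $u_0$, not $w$, still only yields power-of-$\delta$ bounds, and transferring such a lower bound from $u_0$ to $w$ near $\partial\Omega$ would be circular, since that boundary comparison is precisely what the lemma is establishing.

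The paper's negative-part estimate is structurally different and is the heart of the lemma. Rather than one De~Giorgi step over the whole thin tube, it localizes to cubes $Q(x_0,2\delta)$ of size comparable to the tube width, where by the corkscrew condition the complement of $\Omega$ occupies a fixed fraction of the cube, so the oscillation-decay (weak Harnack) estimate for the nonnegative subsolution $w^-$ gives $w^-(x_0)\le(1-c_1)\sup_{Q(x_0,2\delta)}w^-$ with $c_1=c_1(\lambda,N_0)$. Iterating inward over roughly $1/(8\delta)$ steps from $Q_1$ to $Q_{1/2}$ yields $w^-\le(1-c_1)^{1/(8\delta)}$ on $K_{1/2}$, which is \emph{exponentially} small in $1/\delta$ and hence beats \emph{every} power of $\delta$, in particular the chain loss $a\sim\delta^{\xi_1\log h}$, once $\delta$ is chosen small (this is what the paper's remark ``$\delta\log\delta\to0$'' encodes). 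This iterated weak Harnack step is the ingredient your proposal is missing.

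There is also a secondary mismatch of hypotheses: the Minkowski content bound you invoke from \cite{CNV15}, \cite{NV17} requires Lipschitz leading coefficients, whereas Lemma \ref{Iteration Decay} (like Theorem \ref{Boundary Harnack in Single Domain}) is stated for an operator $L$ satisfying only the ellipticity condition (\ref{Coefficient 1}), with constants of the form $C(\lambda,N_0)$. The paper's argument uses only the corkscrew condition and the weak Harnack inequality, both available under (\ref{Coefficient 1}) alone, so it holds at this level of generality; your De~Giorgi--plus--Minkowski step does not.
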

\begin{proof}
    First, we construct the lower bound on $A_{1/2}$. Pick an $x_0 \in A_{1/2}$, then, there is a modified Harnack chain $\{x_0, x_1, \dots, x_m\}$, which we got in Theorem \ref{Stubborn Ball}. In the Corkscrew Condition of Theorem \ref{Stubborn Ball}, we showed that $\delta(x_m) \geq c_4 = c_4 (\lambda,N_0)$. We also got that,
        \begin{equation}
            m \leq -\xi_1 \log(\delta(x_0)) + \xi_2 \leq -\xi_1 \log(\delta/2) + \xi_2 ,
        \end{equation}
    in (iv) of Theorem \ref{Stubborn Ball} for $\xi_1 = \xi_1 (\lambda, N_0) >0$, $\xi_2 =  
\xi_2(\lambda, N_0)$. Hence, if we assume that $\delta < c_4$ first, by the Harnack inequality 
along this chain with constant $h(\lambda,\theta_0) = h(\lambda,N_0)>1$, we have
        \begin{equation}
            w(x_0) \geq (M_0+1) \cdot h^{\xi_1 \log(\delta/2) - \xi_2} - 1 . 
        \end{equation}
    We choose $a = (1/2) \cdot h^{\xi_1 \log(\delta/2) - \xi_2} $. Then, when $M_0 \geq 1/a$, we have that
        \begin{equation}
            w(x_0) \geq M_0 \cdot a. 
        \end{equation}
    
    Then, we will show that $w \geq -a$ on $K_{1/2}$ for suitable $\delta$. Let $x_0 \in K_{1-2\delta} \backslash A_1$, consider the cube $Q(x_0,2 \delta)$. By Theorem \ref{Stubborn Ball}, there is a small ball with radius $\delta \cdot c$ for some $c = c(\lambda, N_0)$ in $Q(x_0,2 \delta) \backslash \Omega$, where $w^{-} = 0$. 
    Hence, by the weak Harnack inequality we mentioned in the proof of Lemma \ref{Caffarelli's Lemma}, and $w \geq -1$ on $K_1$,
        \begin{equation}
            w^{-}(x_0) \leq (1-c_1) \sup_{Q(x_0,2 \delta)} w^{-} \leq (1-c_1),
        \end{equation}
    for some $c_1 = c_1(\lambda,N_0) \in (0,1)$. Hence, $w^{-} \leq (1-c_1)$ in $K_{1-2\delta}$. By iteration, we get $w^{-} \leq (1-c_1)^t$ in $K_{1-2t \delta}$, and then,
        \begin{equation}
            w \geq -(1-c_1)^{\frac{1}{8\delta}}, \ \mr{on} \  K_{1/2}.
        \end{equation}
    Since $\delta \cdot \log(\delta) \to 0$ as $\delta \to 0$, we can choose a small $\delta = \delta(\lambda, N_0)$, such that
        \begin{equation}
            (1-c_1)^{\frac{1}{8\delta}} \leq a = (1/2) \cdot h^{\xi_1 \log(\delta/2) - \xi_2} .
        \end{equation}
\end{proof}

By the above Lemma \ref{Iteration Decay}, and by iterating the same arguments on $K_{2^{-t}}$ and 
$A_{2^{-t}}$, we can conclude that $w > 0$ in $\{x \in K_1 \ | \ \delta(x) \geq 2 \delta |x|\}$. 
Because one can vary the centers of $K_s$ and $A_s$, it leads to  $w > 0$ in $K_{1/4}$.

\begin{proof}[Proof of Theorem \ref{Boundary Harnack in Single Domain}]
    Set $w \equiv Cu-v$. We will choose suitable $C$ so that $w$ will satisfy the assumptions of Lemma \ref{Iteration Decay}.

    For (\ref{Growth Control Single 1}), the statement follows by choosing $M \geq M_0 + 1$, $r \leq \delta$ with $M_0, \delta$ in Lemma \ref{Iteration Decay}, and choosing $C = M \cdot \sup_{K_1} |v| \cdot (\inf_{ y \in K_2, \ \delta(y) \geq r} u(y))^{-1}$

    For (\ref{Growth Control Single 3}), by Lemma \ref{Caffarelli's Lemma},
        \begin{equation}
            \sup_{K_1}v \leq M_1 \cdot \sup_{y\in K_{3/2}, \ \dist(y,Z_0) \geq c} v,
        \end{equation}
    for some positive $M_1 =M_1(\lambda,N_0)$ and $c = c(\lambda,N_0)$. Hence, if we choose $r = \min\{c, \delta\}$, $M = \max\{M_1 , M_0+1\}$, with $M_0, \delta$ in Lemma \ref{Iteration Decay}, we have
        \begin{equation}
            \sup_{K_1}v \leq M \cdot C_2.
        \end{equation}
    Then, we choose $C=C_1 ^{-1} C_2 M^2 $ and the conclusions of the theorem follow.
\end{proof}

\begin{remark}
    The strong maximum principle holds for $v/u$ by a similar proof as in Theorem \ref{Strong 
 Maximum Principle}. An interesting part is that $\sup_{\Omega \cap B_1} v/u$ may not be achieved 
 on $\pa \Omega \cap B_1$.
 \end{remark}

\begin{corollary}\label{Growth Control on a Single Domain}
    \textit{
    Suppose that $L_0(v) = 0$ in $\Omega$, $v>0$ on $\Omega$, $v=0$ on $\pa \Omega \cap B_3$ continuously. 
    Then, there are positive constants $C = C(\lambda, N_0)$, $r = r(\lambda,N_0)$, such that
        \begin{equation}
            C^{-1}  \cdot \delta(x)^{N_0} \cdot \inf_{y \in B_2 \cap \Omega, \ \delta(y) 
\geq r} v(y)\leq v(x)\leq  C \cdot \delta^{\alpha}(x) \cdot \sup_{B_1 \cap \Omega} v
        \end{equation}
    in $B_{1/4} \cap \Omega$.
    }
\end{corollary}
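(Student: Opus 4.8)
The plan is to reduce everything to a comparison between $v$ and the defining function $u_0$ of the nodal domain $\Omega$, and then to insert the intrinsic growth estimate for $u_0$. First I would observe that, since $\Omega$ is a nodal domain of $u_0$, the function $u_0$ has a fixed sign on $\Omega$, so after replacing $u_0$ by $-u_0$ if necessary we may assume $u_0 > 0$ on $\Omega$; moreover $u_0$ vanishes continuously on $\pa\Omega \cap B_3$ because $\pa\Omega \cap B_5 \subset Z_0 = Z(u_0)$ and $u_0 \in C^\alpha$. Hence the pair $(u_0,v)$ — both solutions of $L_0$ in $\Omega$ — satisfies the hypotheses of Theorem \ref{Boundary Harnack in Single Domain}, and so does the pair $(v,u_0)$ with the roles switched, since $v > 0$ on $\Omega$ by assumption. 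The corkscrew and modified Harnack chain structure that makes Theorem \ref{Boundary Harnack in Single Domain} work depends only on $u_0$ (through $N_{u_0} \leq N_0$), not on the particular solutions living on $\Omega$, so both applications are legitimate.

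Applying Theorem \ref{Boundary Harnack in Single Domain} in the two directions then gives, on $B_{1/4} \cap \Omega$, the two-sided estimate
$$M^{-1}\,\frac{\inf_{y \in B_2 \cap \Omega,\ \delta(y) \geq r} v(y)}{\sup_{B_1 \cap \Omega} u_0}\ \leq\ \frac{v}{u_0}\ \leq\ M\,\frac{\sup_{B_1 \cap \Omega} v}{\inf_{y \in B_2 \cap \Omega,\ \delta(y) \geq r} u_0(y)},$$
with $M = M(\lambda,N_0)$ and $r = r(\lambda,N_0)$ the constants of that theorem — shrinking $r$ if needed so that, by the Corkscrew Condition in Theorem \ref{Stubborn Ball} applied to $u_0$, the set $\{y \in B_2 \cap \Omega : \delta(y) \geq r\}$ is nonempty. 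Next I would invoke Theorem \ref{Growth Estimate} for $u_0$ (valid since $0 \in Z_0 \cap B_4$ and $N_{u_0} \leq N_0$): for every $x \in B_2$,
$$A_2\,\delta(x)^{N_0}\sup_{B_8}|u_0|\ \leq\ u_0(x)\ \leq\ A_1\,\delta(x)^{\alpha}\sup_{B_8}|u_0|.$$
In particular $\inf_{y \in B_2 \cap \Omega,\ \delta(y) \geq r} u_0(y) \geq A_2 r^{N_0}\sup_{B_8}|u_0|$, while trivially $\sup_{B_1 \cap \Omega} u_0 \leq \sup_{B_8}|u_0|$.

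Finally I would multiply the two-sided bound for $v/u_0$ by $u_0(x)$ and substitute these estimates; the normalization factor $\sup_{B_8}|u_0|$ then cancels in every term, leaving
$$C^{-1}\,\delta(x)^{N_0}\inf_{y \in B_2 \cap \Omega,\ \delta(y) \geq r} v(y)\ \leq\ v(x)\ \leq\ C\,\delta(x)^{\alpha}\sup_{B_1 \cap \Omega} v$$
on $B_{1/4} \cap \Omega$, with $C = \max\{M A_2^{-1},\ M A_1 A_2^{-1} r^{-N_0}\} = C(\lambda,N_0)$. I do not expect a genuine obstacle here: the statement is essentially a corollary of the boundary Harnack inequality (Theorem \ref{Boundary Harnack in Single Domain}) together with the already-established growth estimate (Theorem \ref{Growth Estimate}), so the only point demanding care is the bookkeeping of constants that makes all the $\sup_{B_8}|u_0|$ factors drop out, leaving constants depending only on $\lambda$ and $N_0$.
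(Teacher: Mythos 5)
Your proof is correct and follows exactly the route the paper indicates in its one-line proof: apply Theorem \ref{Boundary Harnack in Single Domain} to the pairs $(u_0,v)$ and $(v,u_0)$ (the constants there depend only on $\lambda$ and $N_0$ through $u_0$, as you note) to get a two-sided bound on $v/u_0$, and then insert Theorem \ref{Growth Estimate} for $u_0$ so that the $\sup_{B_8}|u_0|$ normalizations cancel. The bookkeeping of constants is done correctly, so this is just the expanded version of the paper's own argument.
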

\begin{proof}
    These follow from considering $v$ and $u_0$ in Theorem \ref{Boundary Harnack in Single Domain} and Theorem \ref{Growth Estimate}.
\end{proof}

\begin{remark}\label{Neck Region}
    As we can see in (\ref{Growth Control Single 3}) of Theorem \ref{Boundary Harnack in Single Domain}, the upper bound will depend on the ratio $C_2/C_1$, which also depends on the set $\{ x \in K_2 \ | \ \delta(x)\geq r\}$. Since $\Omega$ is connected, one can apply the usual Harnack inequality on this set. So, $C_2/C_1$ is actually a quantity depending on the shape
    of the single nodal domain $\Omega$. Is it possible that $C_2/C_1$ could be controlled by some constants only depending on $N_0$? The answer is no and we have the following counter example.
    Consider $\Omega_{\epsilon} \equiv \{(x,y) \in \mb{R}^2 \ | \ x^2 -y^2 > -\epsilon, |x| < 
1\}$, which is the part of one nodal domain of $u_\epsilon (x,y) = x^2 -y^2 + \epsilon$ in 
$B_1$. It has a thin and short neck region around the origin. Let $v_\epsilon$ be the 
solution of the following Dirichlet problem:
        \begin{equation}
            \Delta v_\epsilon = 0 , \ \mr{on} \ \Omega_\epsilon,
        \end{equation}
and
        \begin{equation}
            v_{\epsilon} = 1, \  \mr{on} \ \{x = 1\} \cap \pa \Omega_\epsilon \ ; \ v_{\epsilon} = 0, \ \mr{on}  \ \{-1\leq x < 1\} \cap \pa \Omega_\epsilon .
        \end{equation}
We notice that $v_\epsilon > 0$ in $\Omega_\epsilon$, $v_\epsilon > C(n)>0$ when $x >1/2$ and 
$|y| < 1/2$, and $v_\epsilon$ is very close to $0$ when $x < 0$. As $\epsilon \to 0$, 
$v_\epsilon$ will tend to $0$ on $\{x <0\}$, which means that $u_\epsilon / v_\epsilon \to 
\infty$. One can also consider $v_\epsilon(-x, y)$ for similar purpose on the part of nodal 
domain with $x>0$.  If one replaces $y$ by $(y_1,..., y_{n-1})$ one find examples in dimension 
$n$. On the other hand, if one replaces $x$ by $(x_1,x_2)$ and $y$ by $(y_1,y_2)$, then there is
no problem when $\epsilon$ goes to zero. In the latter case, $\Omega_\epsilon$ is quantitatively 
connected (independent of small $\epsilon$), see Definition \ref{Quantitative Connectedness}. 
\end{remark}

Nodal domains are path-connected by its definition. Examples in the Remark \ref{Neck Region} showed that they can easily be degenerate and decompose into several smaller nodal domains even for a sequence of nodal domains of solutions in $\mc{S}_{N_0}(\Lambda)$.
Consequently, many analytic estimates on a nodal domain of a solution $u_0 \in \mc{S}_{N_0} (\Lambda)$ are not uniform (depending only on $\Lambda$ and $N_0$).
On the other hand, even a single nodal domain is degenerate and decompose into several smaller nodal domains, the number of such small nodal domains is again locally uniformly bounded by a constant $T_0(\lambda,N_0)$, see Corollary \ref{Number of Domains}.

Inspired by our proof of Theorem \ref{Bound Frequency} and Lemma \ref{Caffarelli's Lemma}, if we use dyadic cubes with side length $r/10$ to cover $B_{10}$ with $r = r(\lambda,N_0)$ chosen in the Theorem \ref{Boundary Harnack in Single Domain}, those cubes which have nonempty intersections with $\{ y\in B_2 \ | \ \delta(y) \geq r\}$ will form several big chunks $E_i$, $i = 1, \dots, T $, with each $E_i$ path-connected and $T \leq C(n)r^{-n}$, but different $E_i, E_j$ are disjoint. 
Then, we could give another interesting upper bound in the following.

\begin{corollary}\label{Big Chunk Bound}
    \textit{
    Suppose that $L_0(v) = 0$ in $\Omega$, $v>0$ on $\Omega$, $v=0$ on $\pa \Omega \cap B_3$ continuously. Then, there is a positive constant $C = C(\lambda,N_0)$, such that
        \begin{equation}\label{Growth Control Single 2}
            \frac{v(x)}{u_0(x)} \leq C \cdot \max\bigg\{\frac{v(x_1)}{u_0(x_1)} , \dots, \frac{v(x_T)}{u_0(x_T)} \bigg\},
        \end{equation}
    in $B_{1/4} \cap \Omega$, where $x_i$ is an arbitrary point inside $E_i$ for each $i = 1, \dots, T$.
    }
\end{corollary}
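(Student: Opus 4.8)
The plan is to combine the one-sided boundary Harnack estimate of Theorem \ref{Boundary Harnack in Single Domain} with two separate bounds on the two factors it produces: a Carleson-type estimate for the numerator, which relocates the near-maximum of $v$ into one of the chunks $E_i$, and the interior growth estimate of Theorem \ref{Growth Estimate} for the denominator, which is insensitive to the decomposition into chunks. We may assume $u_0>0$ on $\Omega$; then $u_0=v=0$ on $\pa\Omega\cap B_3$ continuously, both solve $L_0(\cdot)=0$ in $\Omega$, and $v>0$ on $\Omega$ so that $v/u_0=|v/u_0|$. Hence (\ref{Growth Control Single 1}) of Theorem \ref{Boundary Harnack in Single Domain}, applied with $u=u_0$, furnishes constants $M,r$ depending only on $\lambda$ and $N_0$ --- and, as in the proof of that theorem, with $r\le c$ where $c=c(\lambda,N_0)$ is the constant from the Carleson estimate Lemma \ref{Caffarelli's Lemma} --- such that $v(x)/u_0(x)\le M\cdot\big(\sup_{B_1\cap\Omega}v\big)/\inf\{u_0(y):y\in B_2\cap\Omega,\ \delta(y)\ge r\}$ for every $x\in B_{1/4}\cap\Omega$.

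To control the numerator I would first apply Lemma \ref{Caffarelli's Lemma} (in the cube form used in the proof of Theorem \ref{Boundary Harnack in Single Domain}) to obtain a point $y^\ast\in B_2\cap\Omega$ with $\delta(y^\ast)\ge c$ and $\sup_{B_1\cap\Omega}v\le M_1\,v(y^\ast)$ for some $M_1=M_1(\lambda,N_0)$. Since the cubes defining the $E_i$ have side length comparable to $r$, any such cube meeting $\{\delta\ge r\}$ is contained in $\{\delta\ge r/2\}$, hence inside a single nodal domain; consequently every chunk meeting $\Omega$ is contained in $\Omega$ and keeps distance $\ge r/2$ from $\pa\Omega$, and we may relabel so that $E_1,\dots,E_T\subset\Omega$ are precisely these chunks. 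As $\delta(y^\ast)\ge c\ge r$, the point $y^\ast$ lies in one of them, say $E_{i_0}$. Connecting $y^\ast$ to the chosen $x_{i_0}\in E_{i_0}$ by a chain of at most $C(n)r^{-n}$ overlapping balls of radius $r/4$, all inside $\Omega$, and iterating the interior Harnack inequality for the positive solution $v$, one gets $v(y^\ast)\le h^{C(n)r^{-n}}v(x_{i_0})=C(\lambda,N_0)\,v(x_{i_0})$. For the denominator, Theorem \ref{Growth Estimate} applied to $u_0$ (which solves $L_0(u_0)=0$ in $B_{10}$, with $0\in Z(u_0)\cap B_4$ and $N_{u_0}\le N_0$) gives $u_0(y)\ge A_2\,\sup_{B_8}|u_0|\,\delta(y)^{N_0}$ for $y\in B_2$, so $\inf\{u_0(y):y\in B_2\cap\Omega,\ \delta(y)\ge r\}\ge A_2\,r^{N_0}\,\sup_{B_8}|u_0|$, while trivially $u_0(x_{i_0})\le\sup_{B_8}|u_0|$.

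Putting these together, for $x\in B_{1/4}\cap\Omega$,
\begin{equation*}
\frac{v(x)}{u_0(x)}\ \le\ M\cdot\frac{C(\lambda,N_0)\,v(x_{i_0})}{A_2\,r^{N_0}\,\sup_{B_8}|u_0|}\ \le\ \frac{M\,C(\lambda,N_0)}{A_2\,r^{N_0}}\cdot\frac{v(x_{i_0})}{u_0(x_{i_0})}\ \le\ C(\lambda,N_0)\max_{1\le i\le T}\frac{v(x_i)}{u_0(x_i)},
\end{equation*}
where the middle step uses $u_0(x_{i_0})\le\sup_{B_8}|u_0|$ and $r=r(\lambda,N_0)$, and the last uses $i_0\in\{1,\dots,T\}$. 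Alternatively, one need not quote Theorem \ref{Boundary Harnack in Single Domain} at all, but may rerun its proof with $w\equiv C\big(\max_i v(x_i)/u_0(x_i)\big)u_0-v$, normalized by $\sup_{K_1}v$: the same two estimates verify the hypotheses of Lemma \ref{Iteration Decay}, yielding $w\ge 0$ on $K_{1/4}$.

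The conceptual point, and the only place where care is needed, is that the neck-region pathology of Remark \ref{Neck Region} affects $v$ but \emph{not} $u_0$: because $u_0$ is a genuine solution on all of $B_{10}$, Theorem \ref{Growth Estimate} bounds it below by $A_2 r^{N_0}\sup_{B_8}|u_0|$ \emph{uniformly} on the entire interior region $\{\delta\ge r\}$, regardless of how that region breaks into chunks separated by thin necks, so the denominator needs no localization. It is only $v$, defined merely inside $\Omega$, that must be localized to a single chunk $E_{i_0}$, which is exactly why the bound is forced to involve $\max_i v(x_i)/u_0(x_i)$ rather than the value at one point. The remaining, and essentially routine, obstacle is the verification that each chunk sits uniformly away from $\pa\Omega$ and is traversed by a number of Harnack balls controlled by $n$ and $r$, so that the in-chunk chain contributes a constant depending only on $\lambda$ and $N_0$; this is guaranteed by taking the cube side length of order $r$.
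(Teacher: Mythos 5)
Your proof is correct and follows essentially the same route as the paper's: invoke (\ref{Growth Control Single 1}), apply Lemma \ref{Caffarelli's Lemma} to relocate the near-maximum of $v$ into a chunk $E_{i_0}$, traverse a Harnack chain of controlled length inside that chunk to reach $x_{i_0}$, and bound the denominator uniformly via Theorem \ref{Growth Estimate}. The only stylistic difference is that in the denominator step you pass through $\sup_{B_8}|u_0|$ as an intermediary, whereas the paper compares $\inf\{u_0(y):y\in B_2\cap\Omega,\ \delta(y)\ge r\}$ directly to $u_0(x_{i_0})$ via Theorem \ref{Growth Estimate} and Harnack; these are the same estimate, and your version merely makes explicit why the resulting constant is independent of the chunk.
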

\begin{proof}
    On the right hand side of (\ref{Growth Control Single 1}), by Lemma \ref{Caffarelli's Lemma}, for some $M = M(\lambda,N_0)$ we have that
        \begin{equation}
            \sup_{B_1 \cap \Omega }v \leq M \cdot \sup_{y \in B_{3/2} \cap \Omega , \ \delta(y)\geq r} v.
        \end{equation}
    Assume the maximal value on the right hand side of the above inequality is achieved by a point $y_1 \in E_1$. Then, by the usual Harnack inequality inside $E_1$ and the fact that the number of all dyadic cubes is also bounded by $C(n)r^{-n}$, there is a constant $C = C(\lambda, N_0)$ such that
        \begin{equation}
            v(y_1) \leq C \cdot v(x_1).
        \end{equation}
    By Theorem \ref{Growth Estimate} and Harnack inequality, there is a constant $c = c(\lambda,N_0,r) = c(\lambda,N_0)$ such that
        \begin{equation}
            \inf_{ y \in B_2 \cap \Omega, \ \delta(y)) \geq r} u_0(y) \geq c \cdot u_0 (x_1).
        \end{equation}
    By combining the above three inequalities and (\ref{Growth Control 
Single 1}), we obtain (\ref{Growth Control Single 2}).
\end{proof}

The above discussions inspire one to introduce the notion of the Quantitative Connectedness in the following.
\begin{definition}\label{Quantitative Connectedness}
    We say that the nodal domain $\Omega$ is quantitatively connected, if there are positive constants $\delta_1 = \delta_1(\lambda,N_0) \leq \delta_2 = \delta_2(\lambda,N_0) \leq r/2 $,
    such that for any $x_0 \in \overline{\Omega}$, any $s>0$, and any pair of points $x,y$ in $\Omega \cap B_s (x_0)$ with $\delta (x) \geq \delta_2 \cdot s$, $\delta (y) \geq \delta_2 \cdot s$, could be connected by a path totally
    contained inside $\Omega \cap B_s(x_0) \cap \{z \ | \ \delta(z) \geq s \cdot \delta_1\}$.
\end{definition}

If $\Omega$ is quantitatively connected, it is then easy to show that in Corollary \ref{Big Chunk Bound}, one could give an upper bound by an arbitrary $v(x_i) / u_0 (x_i)$. And with the assumptions in Theorem \ref{Boundary Harnack in Single Domain}, one could show the H\"{older} continuity of $v/u$ to the boundary $\pa \Omega$ if $\Omega$ is quantitatively connected.

\subsection{Some Other Properties and Connections to Other Typical Domains}

Apart from the corkscrew property and the modified Harnack chain obtained in Section \ref{Section Corkscrew}, the nodal domain $\Omega$ of a solution $u_0 \in \mc{S}_{N_0}(\Lambda)$ has several other properties that are important for classical potential analysis on non-smooth domains. Let us recall a frew of such properties here.

\begin{property}
    For $u_0 \in \mc{S}_{N_0} (\Lambda)$, $\pa \Omega \cap B_{5}$ is Ahlfors regular.
    Indeed, the upper bound   
        \begin{equation}
            H^{n-1}(B_s (x) \cap \pa \Omega ) \leq H^{n-1}(B_s (x) \cap Z(u_0) ) \leq C(\Lambda,N_0) \cdot s^{n-1}
        \end{equation}
    for all $x \in \pa \Omega \cap B_5$ and $r \in (0,1)$, follows from the geometric measure estimates (\ref{Measure Zero Set}), see for example \cite{DF88}, \cite{HS89}, \cite{L91}, \cite{HL00}, \cite{CNV15}, \cite{NV17}, \cite{LM18} and \cite{L18}.
    The lower bound follows from the corkscrew condition that 
        \begin{equation}
            | \Omega \cap B_s(x) | \geq C(\Lambda,N_0) \cdot s ^n \ ,
        \end{equation}
        \begin{equation}
            | \Omega ^c \cap B_s(x) | \geq C(\Lambda,N_0) \cdot s ^n \ ,
        \end{equation}
    for some $C(\Lambda,N_0) >0$ and the relative isoperimetric inequality
        \begin{equation}
            H^{n-1}(\pa \Omega \cap B_s (x)) \geq C(n) \cdot \big(\min \{|\Omega \cap B_s(x)| , \  |\Omega ^c \cap B_s(x)| \} \big) ^{\frac{n-1}{n}} .
        \end{equation}
\end{property}

It is also clear from the proofs in \cite{HS89}, \cite{HL00} that the following is true.

\begin{property}
    $\pa \Omega  \cap B_5$ is uniformly rectifiable.
    In fact, there is an $\epsilon_0 = \epsilon_0 (\Lambda,N_0) >0$, such that for any $\epsilon \in (0,\epsilon_0)$, $Z(u_0)\cap B_5$ can be decomposed into two parts. One big part is a $C^1$-hypersurface with $C^1$-structure depending on $\epsilon$, and the other small part has $H^{n-1}$ Hausdorff measures less than $\epsilon$.
\end{property}

Finally, we examine some basic properties of harmonic measures with poles in $\Omega$. For any pole $x_0 \in \{ x \in \Omega \cap B_2\ | \ \delta(x) \geq r/2\}$ with $r = r(\lambda ,N_0)$ chosen in Theorem \ref{Boundary Harnack in Single Domain}, one can easily show that 
    \begin{equation}
        G(x_0 , x) \leq C(\lambda , N_0) \cdot  u_0(x)
    \end{equation}
for $x \in \Omega \cap B_1$ by the maximum principle on $(\Omega \cap B_5) \backslash \overline{B_{r/4}(x_0)}$. Here, $G(x_0 , \cdot)$ is the Green function of $L_0$ on $\Omega \cap B_5$.
Hence, by the definition of harmonic measure, we have that
    \begin{equation}
        \omega_{x_0}  \llcorner (\pa \Omega \cap B_1) \leq C(\lambda , N_0) \cdot | \nabla u_0 | \cdot H^{n-1} \llcorner (\pa \Omega \cap B_1).
    \end{equation}
Here $\omega_{x_0}$ is the Harmonic measure on $\pa (\Omega \cap B_5)$ with pole $x_0$.

By estimate (\ref{Measure Singular Set}) and the gradient estimates for $u_0$, one sees that
    \begin{equation}
        | \nabla u_0 | \cdot H^{n-1} \llcorner (\pa \Omega \cap B_2) \ll   H^{n-1} \llcorner (\pa \Omega \cap B_2) \ll | \nabla u_0 | \cdot H^{n-1} \llcorner (\pa \Omega \cap B_2).
    \end{equation}
On the other hand, by Corollary \ref{Big Chunk Bound} and Lemma \ref{Caffarelli's Lemma}, one can prove that
    \begin{equation}
        \sum_{i=1} ^T \omega_{x_i}  \llcorner (\pa \Omega \cap B_2) \geq C(\lambda ,N_0) \cdot |\nabla u_0 | \cdot H^{n-1} \llcorner (\pa \Omega \cap B_2).
    \end{equation}
Hence, we conclude the following.

\begin{theorem}
    \textit{
    Let $\Omega$ be a nodal domain of a solution $u_0 \in \mc{S}_{N_0}(\Lambda)$ with $0 \in \pa \Omega$. Then, there is a set of points $\{x_1 , \dots , x_{T}\}$ chosen in Corollary \ref{Big Chunk Bound} with $T \leq T_0 (\lambda,N_0) < \infty$ in $\Omega \cap B_2$, such that
        \begin{equation}
            C^{-1} \cdot |\nabla u_0 | \cdot H^{n-1} \llcorner (\pa \Omega \cap B_1) \leq \sum_i ^{T} \ \omega_i \llcorner (\pa \Omega \cap B_1)  \leq C \cdot | \nabla u_0 |  \cdot H^{n-1} \llcorner (\pa \Omega \cap B_1) 
        \end{equation}
    for some positive constant $C = C(\lambda, N_0)$, where $\{ \omega_i (\cdot)\}$ are harmonic measures on 
$\pa (\Omega \cap B_5)$ with poles $x_i \in \Omega \cap B_2$, for $i= 1,2, \dots, T$. In particular, 
$\sum_i ^{T} \ \omega_i 
\llcorner (\pa \Omega \cap B_1)$, $H^{n-1} \llcorner (\pa \Omega \cap B_1) $ and $|\nabla u_0| \cdot H^{n-1} \llcorner (\pa \Omega \cap B_1) $
    are mutually absolutely continuous.
    }
\end{theorem}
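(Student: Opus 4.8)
The plan is to turn the two desired inequalities into a two-sided comparison of boundary densities at $H^{n-1}$-almost every point of $\pa\Omega\cap B_1$ and then read off mutual absolute continuity. Write $\mc{L}_0$ for the Lipschitz-coefficient operator with $\mc{L}_0(u_0)=0$, $A_0$ for its matrix, and $G(x_i,\cdot)$ for its Green function on $\Omega\cap B_5$; normalize $\sup_{B_8}|u_0|=1$ and take $u_0>0$ on $\Omega$. First I would fix the poles $x_1,\dots,x_T$ of Corollary~\ref{Big Chunk Bound}, one in each big chunk $E_i$, with $T\le T_0(\lambda,N_0)$, shrinking the side length of the dyadic cubes defining the $E_i$ if necessary so that $\dist(x_i,\pa\Omega)\ge c_0(\lambda,N_0)>0$. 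I would also record the routine facts used at the end: $u_0\in C^{1,\alpha}_{\loc}(B_9)$ by interior estimates, so $Z(u_0)\setminus S(u_0)$ is a $C^{1,\alpha}$ hypersurface, while $H^{n-1}\big(S(u_0)\cap B_4\big)=0$ because $H^{n-2}\big(S(u_0)\cap B_4\big)<\infty$ by (\ref{Measure Singular Set}); hence $H^{n-1}$-a.e.\ $y\in\pa\Omega\cap B_1$ is a $C^{1,\alpha}$ point with $0<|\nabla u_0(y)|\le C(\Lambda,N_0)$, and at such a point $\tfrac{d\omega_i}{dH^{n-1}}(y)=-\big\langle A_0(y)\nabla_yG(x_i,y),\nu(y)\big\rangle\ge0$ while $-\big\langle A_0(y)\nabla u_0(y),\nu(y)\big\rangle=\big\langle A_0(y)\nu(y),\nu(y)\big\rangle\,|\nabla u_0(y)|\asymp|\nabla u_0(y)|$, with $\nu$ the outer normal.

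For the upper half I would prove $G(x_i,\cdot)\le C(\lambda,N_0)\,u_0$ in a neighborhood of $\pa\Omega\cap B_1$ in $\Omega$. Both are nonnegative $\mc{L}_0$-solutions on $\Omega\cap B_5$ away from $x_i$, both vanish continuously on $\pa\Omega\cap B_5$, $u_0\ge0$ on $\Omega\cap\pa B_5$, and on $\pa B_{r/4}(x_i)$ one has $u_0\ge c(\lambda,N_0)>0$ by Theorem~\ref{Growth Estimate} and the Harnack inequality while $G(x_i,\cdot)$ is bounded above by the standard Green function estimate; the maximum principle on $(\Omega\cap B_5)\setminus\overline{B_{r/4}(x_i)}$ gives the claim for a suitable $C$. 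Since both functions vanish on $\pa\Omega$ and (at a $C^{1,\alpha}$ point) vanish linearly, comparing conormal derivatives turns this into $\tfrac{d\omega_i}{dH^{n-1}}(y)\le C\,|\nabla u_0(y)|$ for $H^{n-1}$-a.e.\ $y\in\pa\Omega\cap B_1$; summing over $i\le T$ yields $\sum_i\omega_i\llcorner(\pa\Omega\cap B_1)\le C\,|\nabla u_0|\cdot H^{n-1}\llcorner(\pa\Omega\cap B_1)$. Note a single pole already suffices here, which is why one pole gives only the right half of the asserted estimate.

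The lower half is the substantial point, and this is where the big chunks and the Carleson estimate enter. By the same density dictionary it suffices to prove the reverse comparison $u_0\le C(\lambda,N_0)\,w$ with $w\equiv\sum_i G(x_i,\cdot)$, again in a neighborhood of $\pa\Omega\cap B_1$ in $\Omega$. Fix a small $\rho=\rho(\lambda,N_0)$ and work on $\Omega':=\Omega\setminus\bigcup_i\overline{B_\rho(x_i)}$, on which $w$ is a positive bounded $\mc{L}_0$-solution vanishing on $\pa\Omega\cap B_3$; the corkscrew and modified Harnack chain results of Section~\ref{Section Corkscrew} and the boundary Harnack inequality of Theorem~\ref{Boundary Harnack in Single Domain} continue to hold for $\Omega'$, their proofs only involving the geometry near $\pa\Omega$, where $\Omega$ and $\Omega'$ coincide. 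Applying (\ref{Growth Control Single 1}) with the roles $u\mapsto w$, $v\mapsto u_0$ gives $u_0/w\le M\cdot\big(\sup_{B_1\cap\Omega}|u_0|\big)\big/\big(\inf_{y\in B_2\cap\Omega',\ \delta(y)\ge r}w(y)\big)$ on $B_{1/4}\cap\Omega'$. The numerator is $\le1$. For the denominator, every $y$ with $\delta(y)\ge r$ lies in some chunk $E_j$, which contains $x_j$ and is path-connected through at most $C(n)r^{-n}$ cubes of side $r/10$, so the interior Harnack inequality gives $w(y)\ge G(x_j,y)\ge c(\lambda,N_0)>0$; this is exactly where one pole per chunk is needed, a single pole behind a thin neck (Remark~\ref{Neck Region}) making $G$ arbitrarily small on the far side. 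Hence $u_0\le C\,w$ on $B_{1/4}\cap\Omega'$, in particular on $\{x\in\Omega\cap B_{1/4}:\delta(x)<\rho\}$, and covering $\pa\Omega\cap B_1$ by finitely many balls of radius $\tfrac14$ centered on it upgrades this to a neighborhood of $\pa\Omega\cap B_1$ in $\Omega$. Comparing conormal derivatives at $C^{1,\alpha}$ boundary points gives $\sum_i\tfrac{d\omega_i}{dH^{n-1}}(y)\ge c(\lambda,N_0)\,|\nabla u_0(y)|$ for $H^{n-1}$-a.e.\ $y\in\pa\Omega\cap B_1$, hence the lower half.

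Combining the two halves gives $C^{-1}|\nabla u_0|\cdot H^{n-1}\llcorner(\pa\Omega\cap B_1)\le\sum_i\omega_i\llcorner(\pa\Omega\cap B_1)\le C|\nabla u_0|\cdot H^{n-1}\llcorner(\pa\Omega\cap B_1)$ as in (\ref{Absolute Continuity}), and since $|\nabla u_0|$ is bounded and positive $H^{n-1}$-a.e.\ on $\pa\Omega\cap B_1$, the three measures $\sum_i\omega_i\llcorner(\pa\Omega\cap B_1)$, $|\nabla u_0|\cdot H^{n-1}\llcorner(\pa\Omega\cap B_1)$ and $H^{n-1}\llcorner(\pa\Omega\cap B_1)$ are mutually absolutely continuous. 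I expect the genuine obstacle to be the lower comparison $u_0\le C\,w$, specifically ensuring that every point of $\pa\Omega\cap B_1$ is reached from some pole $x_i$ through a Harnack chain of controlled length staying quantitatively inside $\Omega$ — precisely the thin-neck failure of Remark~\ref{Neck Region}, which is what forces the use of several (but at most $T_0(\lambda,N_0)$) poles rather than one. A secondary technical issue is justifying the conormal-derivative formula for $\omega_i$ and the linear boundary vanishing of $u_0$ and $G(x_i,\cdot)$ at $H^{n-1}$-a.e.\ point, which rests on the $C^{1,\alpha}$-a.e.\ regularity of $Z(u_0)$ together with boundary estimates for $\mc{L}_0$ on $C^{1,\alpha}$ domains.
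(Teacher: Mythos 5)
Your proof follows essentially the same route as the paper's: the maximum-principle comparison $G(x_i,\cdot)\le C\,u_0$ for the upper bound, a boundary-Harnack/Carleson comparison of $u_0$ against $\sum_i G(x_i,\cdot)$ using one pole per big chunk for the lower bound, and a boundary-density comparison at $H^{n-1}$-a.e.\ point of $\partial\Omega$ (using that $H^{n-1}(S(u_0))=0$ follows from the $H^{n-2}$ bound (\ref{Measure Singular Set})) to obtain mutual absolute continuity. The paper's own proof is terse — it cites Corollary \ref{Big Chunk Bound} and Lemma \ref{Caffarelli's Lemma} for the lower estimate without details — and you correctly fill in the same mechanism, including why the thin-neck degeneracy of Remark \ref{Neck Region} forces one pole per chunk and no fewer.
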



\begin{thebibliography}{9999}
\bibitem{A12} H. Aikawa, {\it Potential analysis on nonsmooth domains--Martin boundary and boundary Harnack principle.} Complex analysis and potential theory, CRM Proc. Lecture Notes, 55, AMS (2012), pp. 235--253.
\bibitem{A66} S. Agmon, {\it Unicit\'{e} et convexit\'{e} dans les probl\`{e}mes diff\'{e}rentiels.} Séminaire de Math\'{e}matiques Sup\'{e}rieures, No. 13 (\'{E}t\'{e}, 1965), Les Presses de l'Universit\'{e} de Montr\'{e}al, Montreal, Que., 1966
\bibitem{A79} F. J. Almgren Jr., {\it Dirichlet's problem for multiple valued functions and the regularity of mass minimizing integral currents.} Minimal submanifolds and geodesics (Proc. Japan-United States Sem., Tokyo, 1977), 1979, pp. 1-6.
\bibitem{CFMS81} L. Caffarelli, E. Fabes, S. Mortola, S.Salsa, {\it Boundary behavior of nonnegative solutions of elliptic operators in divergence form.} Indiana University Mathematics Journal, 30(4) (1981), 621-640.
\bibitem{CNV15} J. Cheeger, A. Naber and D. Valtorta., {\it Critical sets of elliptic equations.} Communications on Pure and Applied Mathematics 68.2 (2015): 173-209.
\bibitem{M14} D. Mangoubi, {\it A gradient estimate for harmonic functions sharing the same zeros.} Electron. Res. Announc. Math. Sci. 21 (2014), 62-71.
\bibitem{DF88} H. Donnelly and C. Fefferman, {\it Nodal sets of eigenfunctions on Reimannian manifolds.} Inventiones mathematicae 93.1 (1988): 161-183.
\bibitem{DS20} D. De Silva and O. Savin, {\it A short proof of boundary Harnack principle.} Journal of Differential Equations 269.3 (2020): 2419-2429.

\bibitem{GL86} N. Garofalo and F.H. Lin, {\it Monotonicity properties of variational integrals, Ap weights and unique continuation.} Indiana Univ. Math. J. 35 (1986), 245-268.
\bibitem{H94} Q. Han, {\it Singular sets of solutions to elliptic equations.} Indiana Univ. Math. J. 43 (1994), no. 3, 983-1002.

\bibitem{HHHN99} R. Hardt, M. Hoffmann-Ostenhof, T. Hoffmann-Ostenhof and  N. Nadirashvili, {\it Critical sets of solutions to elliptic equations.} Journal of Differential Geometry 51.2 (1999): 359-373.
\bibitem{HHL98} Q. Han, R. Hardt and F.H. Lin, {\it Geometric measure of singular sets of elliptic equations.} Communications on pure and applied mathematics (1998), 51(11-12), pp.1425-1443.

\bibitem{HL00} Q. Han and F.H. Lin, {\it Rank zero and rank one sets of harmonic maps.} Cathleen Morawetz: a great mathematician. Methods Appl. Anal.7 (2000), no. 2. pp. 417--442.
\bibitem{HL11} Q. Han and F.H. Lin, {\it Elliptic Partial Differential Equations.} Courant Lecture Notes, vol. 1 (2011) Amer. Math. Soc., ISBN-10: 0-8218-5313-9 ISBN-13: 978-0-8218-5313-9.
\bibitem{HS89} R. Hardt and L. Simon, {\it Nodal sets for solutions of elliptic equations.} Journal of differential geometry 30.2 (1989): 505-522.
\bibitem{JK82} D.S. Jerison and C. E. Kenig, {\it Boundary behavior of harmonic functions in non-tangentially accessible domains.} Advances in Mathematics 46, no. 1 (1982): 80-147.
\bibitem{L18} A. Logunov, {\it Nodal sets of Laplace eigenfunctions: polynomial upper estimates of the Hausdorff measure.} Annals of Mathematics, 2018: 221-239.
\bibitem{L182} A. Logunov, {\it Nodal sets of Laplace eigenfunctions: proof of Nadirashvili's conjecture and of the lower bound in Yau's conjecture.} Annals of Mathematics, 2018 Jan 1:241-62.
\bibitem{L91} F.H. Lin, { \it Nodal sets of solutions of elliptic and parabolic equations.} Comm. Pure Appl. Math. 44 (1991), 287-308.
\bibitem{LM15} A. Logunov and E. Malinnikova, {\it On ratios of harmonic functions.} Adv. Math. 274 (2015), 241-262. MR 3318150
\bibitem{LM16} A. Logunov and E. Malinnikova, {\it Ratios of harmonic functions with the same zero set.} Geom. Funct. Anal. 26.3 (2016), pp. 909-925. issn: 1016-443X. doi:10.1007/s00039-016-0369-4
\bibitem{LM18} A. Logunov and E. Malinnikova, {\it Quantitative propagation of smallness for solutions of elliptic equations.} Proceedings ICM - Rio de Janeiro, Vol. III, 2409-2430, 2018.
\bibitem{LM19} A. Logunov and E. Malinnikova, {\it Lecture Notes on quantitative unique continuation for solution of second order elliptic equations.} https://arxiv.org/abs/1903.10619.
\bibitem{NV17} A. Naber and D. Valtorta, {\it Volume estimates on the critical sets of solutions to elliptic PDEs.} Communications on Pure and Applied Mathematics 70.10 (2017): 1835-1897.
\bibitem{T17} T. Toro, {\it Analysis and geometry on non-smooth domains.} Rev. Acad. Colombiana Cienc. Ecat. Fis. Natur. 41 (2017), no 161, pp. 521--527.

\end{thebibliography}
\end{document}